\documentclass[]{article}

\usepackage[a4paper, margin=3cm]{geometry}
\usepackage[T1]{fontenc}
\usepackage[utf8]{inputenc}
\DeclareUnicodeCharacter{2010}{-}
\usepackage{microtype}
\usepackage{amsmath,amssymb,mathtools,amsthm}
\usepackage{mathrsfs}
\usepackage{graphicx}
\usepackage{subcaption}
\usepackage[table]{xcolor}
\usepackage{booktabs}
\usepackage{multirow}
\usepackage[title]{appendix}
\usepackage{algorithm}
\usepackage{algpseudocode}

\usepackage[numbers,sort&compress]{natbib}
\usepackage{url}
\usepackage[hidelinks]{hyperref}
\usepackage{authblk}
\usepackage{orcidlink}
\let\cite\citep
\newcommand{\email}[1]{\href{mailto:#1}{#1}}
\newenvironment{keywords}{\paragraph{Keywords:}}{}

\newcommand{\E}{\mathbb{E}}
\newcommand{\IR}{\mathbb{R}}
\newcommand{\F}{\mathcal{F}}

\theoremstyle{plain}
\newtheorem{theorem}{Theorem}
\newtheorem{proposition}{Proposition}
\newtheorem{lemma}{Lemma}

\newtheorem{assumption}{Assumption}
\theoremstyle{definition}
\newtheorem{definition}{Definition}

\theoremstyle{remark}
\newtheorem{remark}{Remark}

\hypersetup{
    pdftitle={Single-Loop Gradient Algorithms for Pessimistic Bilevel Optimization Problems},
    pdfauthor={Qichao Cao, Bo Zeng, Shangzhi Zeng, Jin Zhang}
}

\title{Single-Loop Gradient Algorithms for Pessimistic Bilevel Optimization Problems%
}

\author{
    Qichao Cao\thanks{Department of Mathematics, Southern University of Science and Technology, Shenzhen 518055, Guangdong, China. (\email{caoqc2024@mail.sustech.edu.cn}).} \quad
    Bo Zeng\thanks{Department of Industrial Engineering, University of Pittsburgh, Pittsburgh, Pennsylvania 15261, USA. (\email{bzeng@pitt.edu}).} \quad
    Shangzhi Zeng\thanks{National Center for Applied Mathematics Shenzhen, and Department of Mathematics, Southern University of Science and Technology, Shenzhen 518055, Guangdong, China. (\email{zengsz@sustech.edu.cn}).} \quad
    Jin Zhang\,\orcidlink{0000-0002-6691-5612}\thanks{Corresponding author. Department of Mathematics, and National Center for Applied Mathematics Shenzhen, Southern University of Science and Technology, Shenzhen 518055, Guangdong, China. (\email{zhangj9@sustech.edu.cn}).}
}
\date{}

\begin{document}

\maketitle
\begingroup
\renewcommand{\thefootnote}{}
\footnotetext{This paper is an extended version of our NeurIPS 2025
	conference paper \cite{qichao2025single}.}
\endgroup
\begin{abstract}
Bilevel optimization has recently attracted growing attention, particularly in the development of efficient numerical methods. Despite substantial progress on optimistic bilevel optimization, pessimistic bilevel optimization (PBO) remains much less explored, especially the design of fully first-order, single-loop gradient-based methods. To address this gap, we propose a smooth approximation of PBO through reformulation, penalization and regularization, and establish convergence guarantees in terms of both minimizers and stationarity. Building on this framework, we then develop two single-loop algorithms for deterministic and stochastic PBOs, respectively. Both use only first-order gradient information and avoid second-order derivatives and inner-loop subproblem solves. Non-asymptotic convergence rates for the proposed algorithms are established to provide theoretical guarantees. Through a systematic empirical study of both synthetic and practical problem instances, we demonstrate that our algorithms are highly effective and efficient. In particular, our results on spam classification and Smart Predict-then-Optimize further illustrate PBO’s advantages over its classical optimistic bilevel counterpart, highlighting its strong potential for practical modeling and the delivery of robust solutions.

\end{abstract}

\begin{keywords}
Pessimistic Bilevel Optimization, Gradient Method, Single-loop, Value Function, Smooth Approximation
\end{keywords}

\section{Introduction}\label{sec1}

Consider bilevel optimization, a hierarchical optimization problem, formulated as follows:
\begin{equation*}
	\min_{x\in{X}} \;F(x, y)\quad
	s.t.\quad y\in\mathcal{S}(x): =\underset{{y^\prime \in {Y}}}{\arg\min} f(x, y^\prime),
\end{equation*}
where $x\in\mathbb{R}^n$ and $y\in\mathbb{R}^m$ represent the upper-level and lower-level decision variables, respectively, and ${X}\subseteq\mathbb{R}^n$ and ${Y}\subseteq\mathbb{R}^m$ are nonempty, closed and convex sets. The functions $F:\mathbb{R}^{n}\times\mathbb{R}^{m}\to\mathbb{R}$ and $f:\mathbb{R}^{n}\times\mathbb{R}^{m}\to\mathbb{R}$ are the upper-level and lower-level objective functions, respectively. Bilevel optimization naturally models the  non-cooperative leader-follower game, often referred to as a Stackelberg game \cite{vonStackelbergHeinrich1953TTot}. 
Note that $\mathcal{S}(x)$ is not a singleton when the lower‐level  (i.e., the follower) problem admits multiple optimal solutions for $x$, rendering the upper‐level's (the leader's) objective  undetermined. To resolve this issue, the bilevel optimization literature generally distinguishes between two settings: Optimistic Bilevel Optimization (OBO) and Pessimistic Bilevel Optimization (PBO).

In the OBO setting, the upper-level believes that the lower-level yields a solution $y \in \mathcal{S}(x)$ that is most favorable to its objective $F$. The OBO formulation is thus:
\begin{equation*}
	\min_{x\in{X}} \min_{y\in \mathbb{R}^m} \;F(x, y)\quad
	s.t.\quad y\in\mathcal{S}(x),
\end{equation*}

Conversely, the PBO setting considers a cautious or adversarial scenario, where the lower-level is assumed to act against the upper-level by feeding back the least-favorable solution from $\mathcal{S}(x)$ to the upper-level.  Hence, the PBO formulation in fact has a tri-level structure, as follows:
\begin{equation*}
	\min_{x\in{X}} \max_{y\in \mathbb{R}^m} \;F(x, y)\quad
	s.t.\quad y\in\mathcal{S}(x).
\end{equation*}
Clearly, between OBO and PBO, the latter, once solved, yields a  conservative or robust solution when the lower-level behavior is unpredictable. 

\subsection{Relevant Literature}
Bilevel optimization has long served as a fundamental framework for hierarchical decision-making problems and has received renewed attention in recent years due to its broad range of applications, including data-driven optimization and learning problems. Yet, most existing studies focus on the optimistic  setting, whose theory, algorithms, and applications have been extensively reviewed in \cite{colson2007overview, dempe2013bilevel, dempe2020bilevel}. Classical approaches for OBO often reduce the bilevel problem to a single-level reformulation, for instance via Karush-Kuhn-Tucker (KKT) conditions leading to mathematical programs with complementarity constraints \cite{allende2013solving, luo1996mathematical}, or via value-function-based constraints \cite{ye1995optimality, outrata1990numerical}. Another line of work approximates the lower-level solution map through finite optimization trajectories and differentiates through the resulting computational process \cite{maclaurin2015gradient,pmlr-v70-franceschi17a}. Building on these reformulations and differentiation-based ideas, numerous algorithms have been developed for OBO under a variety of structural assumptions \cite{pedregosa2016hyperparameter, franceschi2018bilevel, lorraine2020optimizing, sow2022convergence, hong2023two, arbel2021amortized,  ji2021bilevel}.
Among them, first-order methods, especially their single-loop variants, have become a mainstream class of algorithms for OBO, as their low per-iteration computational cost and avoidance of nested lower-level solves make them well suited to large-scale problems \cite{lu2023slm, shen2023penalty, kwon2023fully, liu2022bome, liu2020generic, liu2021towards, lu2024first, kwon2023penalty}. These methods have also been successfully applied to learning tasks such as hyperparameter optimization, meta-learning, and hyper-representation learning \cite{pedregosa2016hyperparameter, franceschi2018bilevel, lorraine2020optimizing, liu2022bome}.

As mentioned earlier, PBO provides a conservative modeling scheme  to handle the ambiguity in the lower-level's response.  
For instance, in a couple of smart predict-then-optimize models, PBO has been used to account for the uncertainty induced by multiple optimal solutions of downstream optimization problems, with practical applications in path planning 
\cite{bucareydecision,jimenez2025pessimistic}. Also, to model the adaptive behavior of strategic agents, e.g., email senders and software developers who may adjust their actions in response to classifiers or detectors, PBO formulations have been introduced in \cite{bruckner2011stackelberg,benfield2024classification}, with numerical evidence supporting their effectiveness. Additionally, in hyperparameter optimization, where a fixed hyperparameter configuration may lead to multiple optimal training solutions, \cite{ustun2024hyperparameter} proposed a PBO formulation and showed that it can yield more robust solutions. Beyond these examples, PBO has been applied to a variety of practical problems, including but not limited to demand response management \cite{kis2021optimistic}, rank pricing and second-best toll pricing \cite{calvete2024novel,ban2009risk}, production-distribution planning \cite{zheng2016pessimistic}, and gene knockout models \cite{zeng2020practical}; see also \cite{liu2018pessimistic, dempe2020bilevel} for comprehensive surveys.

To gain a deep understanding on PBO, its mathematical structure has been studied from different perspectives. The research in \cite{aboussoror2001existence} investigated sufficient conditions for the existence of optimal solutions, while \cite{loridan1988approximate} analyzed properties of approximate solutions. Optimality conditions for PBO have also been developed, including KKT-type conditions for both smooth and nonsmooth settings \cite{dempe2014necessary,dempe2019two}. In addition, \cite{aussel2019pessimistic} studied the relationship between PBO and its MPCC reformulation. We note, however, that the algorithmic developments for solving PBO remain relatively limited. For linear PBO problems, several penalty-based methods have been proposed \cite{aboussoror2005weak, zheng2013exact}. For more general nonlinear PBO problems, existing approaches mainly rely on reformulating PBO into alternative but still challenging problems. For example, \cite{wiesemann2013pessimistic} introduced a semi-infinite programming reformulation of PBO; \cite{lampariello2019standard} reformulated PBO as an optimistic bilevel problem with a two-follower Nash game and solved the resulting problem through an MPCC reformulation; and \cite{zeng2020practical} transformed PBO into a computationally more friendly  minimax problem, together with solution methods for the linear case.  During the final preparation of this manuscript, a very recent study \cite{steintrue} appeared online, deriving a single-level constrained reformulation of PBO that avoids value functions and complementarity constraints, and proposing a penalty-based method for solving the resulting formulation. However, all the aforementioned reformulations either preserve nested or multi-level structures, or introduce complicated constraints, making the design of efficient algorithms rather challenging. 

Among the few computational methods available for nonlinear PBO,  \cite{benchouk2026relaxation,benchouk2025scholtes} applied relaxation methods to the KKT-type optimality
conditions of PBO and employed standard nonlinear equation solvers to compute the resulting nonlinear systems, building on the notions of C- and M-stationarity introduced in \cite{dempe2014necessary}. However, these systems can be large and require both first- and second-order derivative information about the problem functions, making the methods difficult to scale to high-dimensional settings.
More recently,  \cite{guanadaprox} developed the first principled gradient-based approach to nonlinear PBO. By combining a regularized lower-level value function with the KKT conditions of the inner maximization problem, they derive a constrained single-level reformulation that approximates PBO and propose two constrained optimization schemes for solving the resulting proximal subproblems within an outer loop that updates the relaxation level. However, their algorithms require nested loops and second-order information, and the convergence analysis relies on difficult-to-verify assumptions, such as the uniform boundedness of the dual variables across iterations, limiting their applicability to high-dimensional problems.

\subsection{Motivation and Our Contributions}
To solve PBO formulations arising from large-scale and data-driven applications, we identify two critical challenges that must be addressed. First, computational burden remains a major obstacle: existing double-loop second-order schemes are often computationally prohibitive, noting that Hessian-related operations and linear-system solves scale poorly with problem dimension, while repeatedly solving inner subproblems to sufficient accuracy introduces substantial additional overhead. These costs severely limit the scalability of such methods, particularly for modern machine learning applications.  Second, many practical applications are inherently stochastic: the upper- and lower-level objectives are often defined as expectations over data distributions and are accessible only through samples or stochastic first-order oracles. In such settings, exact-gradient deterministic methods become impractical or even inapplicable. Nevertheless, the development of algorithms for stochastic PBO has received comparatively little attention. 
In particular, fully first-order, single-loop methods with rigorous convergence guarantees remain largely underdeveloped for PBO.   Motivated by this gap, we ask:

\textit{Can we design fully first-order, single-loop algorithms for PBO that are applicable to both deterministic and stochastic oracle settings, with rigorous convergence guarantees?}	

This paper demonstrates that the answer is affirmative. To this end, we consider the pessimistic value-function reformulation of PBO:
\begin{equation}\label{pessimisticBLO}
	\min_{x \in {X}} \; \phi(x), \quad \text{where} \; \phi(x) := \max_{y\in \mathbb{R}^m}\; \left\{ F(x, y) \quad \text{s.t.} \; y \in \mathcal{S}(x) \right\}.
\end{equation}
Here, 
$\phi(x)$ is the value function of a maximization problem whose feasible region depends on the solution set 
$S(x)$ of another optimization problem. Note that 
$\phi(x)$
is generally non-smooth \cite{guo2024sensitivity}, and evaluating its value and (sub)gradient is computationally demanding, often requiring an accurate solution of the lower-level problem to characterize 
$S(x)$. This non-smoothness is the primary obstacle to directly minimizing 
$\phi(x)$, and it motivates the algorithmic approach we develop in this paper.	

Specifically, we construct a smooth approximation of $\phi(x)$ via penalization and regularization, reformulating PBO as a tractable smooth optimization problem that allows for efficient gradient-based methods. However, evaluating the gradient of the smoothed objective requires solving an associated minimax subproblem to obtain its saddle point, which is computationally expensive and impedes practical implementation. To overcome this issue, we employ a one-step gradient ascent-descent update to generate an inexact saddle-point estimate, which is then used to construct an inexact gradient for minimizing the smoothed objective. Based on this inexact-gradient mechanism, we develop a single-loop algorithmic framework for PBO. In particular, we propose fully first-order, single-loop methods for solving problem \eqref{pessimisticBLO} under both deterministic and stochastic oracle settings.

We next present the main contributions of this paper:

\noindent$(i)$ Smooth approximation and theoretical validation --
We introduce a novel smooth approximation for PBO by constructing a
continuously differentiable approximation  for the potentially non-smooth value
function $\phi(x)$, through penalization and regularization. The strength of this approximation is justified theoretically from two perspectives, i.e., the minimizer convergence and stationarity convergence. First, we establish the epi-convergence of the smooth approximation to 
$\phi(x)$, which yields the convergence of  global minimizers. 
Second, under explicit constraint representations and suitable constraint qualifications and a uniform parametric lower-level solution-set error bound condition, we establish stationarity convergence. Specifically, we show that any accumulation point of a sequence with vanishing first-order stationarity residuals for the smooth approximation problems is C-stationary for the original PBO.

\noindent$(ii)$ Single-loop algorithms and convergence guarantees-- Based on the smooth approximation, we develop two algorithms, namely SiPBA and Sto-SiPBA, for deterministic and stochastic PBO, respectively. These algorithms use only first-order information and update the upper-level and saddle-point variables within a single loop, without any subroutine to iteratively solve auxiliary subproblems. To handle stochasticity, Sto-SiPBA incorporates a recursive variance-reduced estimator for the stochastic upper-level direction. For both algorithms, under mild conditions, we first establish non-asymptotic bounds on the projected-gradient residual and the saddle-point tracking error, with the stochastic bounds holding in expectation. 
Moreover, when the feasible sets admit explicit inequality representations and suitable regularity conditions hold, we prove that the upper-level iterates generated by SiPBA possess a subsequence converging to a C-stationary point of the original PBO, and that the same conclusion holds almost surely for Sto-SiPBA.

\noindent$(iii)$ Empirical verification -- 
We evaluate the proposed framework on a variety of synthetic and practical problems, including spam classification and Smart Predict-then-Optimize. Obtained results  demonstrate superior performance across these settings, particularly for the spam-classification and Smart Predict-then-Optimize problems, providing strong empirical support for the effectiveness of the PBO scheme, the quality of our smooth approximation, and the proposed algorithms. 

\textbf{Organization of the paper.}
The remainder of this paper is organized as follows.
Section~\ref{sec:notation-prelim} introduces the notation and standing assumptions. Section~\ref{sec:smoothapproximation} develops the
penalized and regularized smooth approximation of the pessimistic value
function and establishes its differentiability, epi-convergence, and
stationarity convergence. Section~\ref{sec:deterministic-algorithm} presents SiPBA for deterministic
PBO and summarizes its non-asymptotic and subsequential convergence
guarantees.
Section~\ref{sec:stochastic-algorithm} develops Sto-SiPBA for stochastic PBO and provides the non-asymptotic
convergence rates for expected residual, and subsequential convergence guarantee
in almost surely sense.
Section~\ref{sec:numericalexperiments} reports empirical results on synthetic
problems, cross-corpus spam classification, and smart
predict-then-optimize. The final section concludes the paper, and additional
technical proofs are collected in the appendices.

\section{Notation and Preliminaries}\label{sec:notation-prelim}

This section introduces the notation and standing assumptions used throughout the paper and recalls the variational analysis concepts and stationarity notions needed for the subsequent analysis.

\subsection{Notation and Standing Assumptions}

Throughout, $\|\cdot\|$ and $\langle\cdot,\cdot\rangle$ denote the Euclidean
norm and inner product, respectively. For vectors
$a$ and $b$ of the same dimension, $a\circ b$ denotes their Hadamard
(componentwise) product. For a nonempty closed convex set $C\subseteq\IR^d$,
let $\operatorname{Proj}_{C}:\IR^d\to C$ denote the Euclidean projection onto
$C$, and let $N_C:\IR^d\rightrightarrows\IR^d$ denote the normal cone mapping
defined by
\(N_C(z):=\bigl\{v\in\IR^d:\langle v,z'-z\rangle\leq 0
\text{ for all }z'\in C\bigr\}\) for \(z\in C\), and
\(N_C(z):=\emptyset\) for \(z\notin C\).

We impose the following standing assumptions on the PBO problem
\eqref{pessimisticBLO} throughout the paper.

\begin{assumption}\label{assum1}
	The upper-level objective function $F$ is continuously differentiable, and
	its gradient $\nabla F$ is Lipschitz continuous on $X\times Y$. For every
	$x\in X$, the function $F(x,\cdot)$ is concave on $Y$. Moreover,
	$-F(x,\cdot)$ is coercive on $Y$ for every $x\in X$; equivalently,
	\(\lim_{\|y\|\to\infty,\,y\in Y}F(x,y)=-\infty\) for every
	\(x\in X\).
\end{assumption}

\begin{assumption}\label{assum2}
	The lower-level objective function $f$ is continuously differentiable, and
	its gradient $\nabla f$ is Lipschitz continuous on
	$X\times Y$. For every $x\in X$, the function $f(x,\cdot)$ is convex on
	$Y$, and $\mathcal{S}(x)$ is nonempty. Moreover, for every bounded set
	$D_x \subseteq X$, there exists a bounded set $D_y\subseteq Y$ such that
	$\mathcal{S}(x)\cap D_y\neq\emptyset$ for every $x\in D_x$.
\end{assumption}

By Assumption~\ref{assum2}, $\mathcal{S}(x)$ is nonempty for every $x\in X$.
Since $Y$ is closed and convex and $f(x,\cdot)$ is continuous and convex on
$Y$, the set $\mathcal{S}(x)$ is closed and convex. Furthermore, because
$\mathcal{S}(x)$ is nonempty and closed, the continuity of $F(x,\cdot)$ and
the coercivity of $-F(x,\cdot)$ imply that $F(x,\cdot)$ attains its maximum
over $\mathcal{S}(x)$. We therefore define the pessimistic solution mapping
$\mathcal S_p:X\rightrightarrows Y$ by
\[
\mathcal S_p(x)
:=
\arg\max_{y\in\mathcal S(x)}F(x,y).
\]
For every $x\in X$, the set $\mathcal S_p(x)$ is nonempty, compact and convex.

\subsection[Semicontinuity and Epi-convergence]{Semicontinuity and Epi-convergence}
\label{subsec:variational-preliminaries}

To prepare for the convergence analysis of the smooth approximation developed
in Section~\ref{sec:smoothapproximation}, we recall lower semicontinuity for
real-valued functions and inner semicontinuity for set-valued mappings.

\begin{definition}
	A function \(\phi:X\to\IR\) is said to be lower semicontinuous at
	\(\bar{x}\in X\) if, for every sequence \(\{x_k\}\subset X\) with
	\(x_k\to\bar{x}\), we have 
	\(
	\liminf_{k\to\infty}\phi(x_k)
	\geq
	\phi(\bar{x}).
	\)
\end{definition}

\begin{definition}
	A set-valued mapping \(\Gamma:X\rightrightarrows Y\) is said to be
	inner semicontinuous at \(\bar{x}\in X\) if, for every
	\(\bar{y}\in \Gamma(\bar{x})\) and every sequence
	\(\{x_k\}\subset X\) with \(x_k\to\bar{x}\), there exists a sequence
	\(\{y_k\}\subset Y\) such that
	\(
	y_k\in \Gamma(x_k)\) and \(	y_k\to\bar{y}.
	\)
\end{definition}

The following result shows that inner semicontinuity of the lower-level
solution mapping guarantees lower semicontinuity of the pessimistic value
function; see \citep[Lemma~B.8]{qichao2025single}.

\begin{lemma}\label{lem:isc-implies-lsc}
	If \(\mathcal{S}\) is inner semicontinuous at
	\(\bar{x}\in X\), then \(\phi\) is lower semicontinuous at
	\(\bar{x}\).
\end{lemma}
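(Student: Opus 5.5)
To prove Lemma~\ref{lem:isc-implies-lsc}, I will argue directly from the definitions, using that $\phi(x)=\max_{y\in\mathcal S(x)}F(x,y)$ is attained at every $x\in X$, as noted after Assumption~\ref{assum2}. Fix $\bar x\in X$ and a sequence $\{x_k\}\subset X$ with $x_k\to\bar x$. The goal is
\[
\liminf_{k\to\infty}\phi(x_k)\ge\phi(\bar x).
\]

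The first step is to choose a maximizer $\bar y\in\mathcal S_p(\bar x)$. This set is nonempty by the discussion following Assumption~\ref{assum2}, so $\phi(\bar x)=F(\bar x,\bar y)$ and $\bar y\in\mathcal S(\bar x)$.

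Next, I will apply inner semicontinuity of $\mathcal S$ at $\bar x$ to the point $\bar y$ and the sequence $\{x_k\}$. This produces points $y_k\in\mathcal S(x_k)$ with $y_k\to\bar y$. Since each $y_k$ is feasible for the inner maximization defining $\phi(x_k)$, we get
\[
\phi(x_k)\ge F(x_k,y_k)\quad\text{for every }k.
\]

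Finally, $F$ is continuously differentiable by Assumption~\ref{assum1}, hence jointly continuous. Because $(x_k,y_k)\to(\bar x,\bar y)$, it follows that $F(x_k,y_k)\to F(\bar x,\bar y)=\phi(\bar x)$. Taking $\liminf$ in the inequality above gives $\liminf_k\phi(x_k)\ge\phi(\bar x)$, which is the required lower semicontinuity.

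No step here is a real obstacle. The only points to check are that the supremum defining $\phi(\bar x)$ is attained, so that a single $\bar y$ can be chosen, and that $F$ is jointly continuous. Both are guaranteed by the standing assumptions.

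Attainment is not even essential. If it failed, I would instead pick $\bar y_\varepsilon\in\mathcal S(\bar x)$ with $F(\bar x,\bar y_\varepsilon)\ge\phi(\bar x)-\varepsilon$. The same argument then gives $\liminf_k\phi(x_k)\ge\phi(\bar x)-\varepsilon$, and letting $\varepsilon\downarrow0$ recovers the claim.
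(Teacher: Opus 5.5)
Your proof is correct and is the standard argument behind this lemma, which the paper does not reprove but defers to Lemma~B.8 of its conference version. You take $\bar y\in\mathcal S_p(\bar x)$, use inner semicontinuity to get $y_k\in\mathcal S(x_k)$ with $y_k\to\bar y$, and pass to the limit in $\phi(x_k)\ge F(x_k,y_k)$ using continuity of $F$; your $\varepsilon$-maximizer remark, showing that attainment of the maximum is not needed, is also valid.
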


We next recall epi-convergence, which is defined through set convergence of
epigraphs. Let $\overline{\IR}:=\IR\cup\{-\infty,+\infty\}$. For an
extended-real-valued function \(\phi:X\to\overline{\IR}\), its epigraph
relative to \(X\) is defined by
\[
\operatorname{epi}_{X}\phi
:=
\left\{
(x,r)\in X\times\IR
:
r\ge\phi(x)
\right\}.
\]

The set convergence underlying epi-convergence is understood in the
Painlev\'e--Kuratowski sense. For a sequence of sets
\(C_k\subseteq X\times\IR\), its outer and inner limits are defined,
respectively, by
\[
\begin{aligned}
	\operatorname*{Limsup}_{k\to\infty}C_k
	&:=
	\left\{
	w
	\;\middle|\;
	\exists\,k_j\to\infty,\ 
	\exists\,w_j\in C_{k_j}
	\text{ such that }w_j\to w
	\right\},
	\\
	\operatorname*{Liminf}_{k\to\infty}C_k
	&:=
	\left\{
	w
	\;\middle|\;
	\exists\,w_k\in C_k
	\text{ for all sufficiently large }k
	\text{ such that }w_k\to w
	\right\}.
\end{aligned}
\]
Accordingly, epi-convergence is defined as follows; see
\citep[Definition~7.1]{rockafellar2009variational}.
\begin{definition}\label{def:epiconvergence}
	Let \(\phi_k,\phi:X\to\overline{\IR}\). The sequence
	\(\{\phi_k\}\) is said to epi-converge to \(\phi\) on \(X\),
	denoted by \(\phi_k\xrightarrow{e}\phi\), if
	\(
	\operatorname*{Liminf}_{k\to\infty}
	\operatorname{epi}_{X}\phi_k = 	\operatorname*{Limsup}_{k\to\infty}
	\operatorname{epi}_{X}\phi_k
	=
	\operatorname{epi}_{X}\phi.	
	\)
\end{definition}
Epi-convergence admits the following equivalent sequential characterization;
see
\citep[Proposition~7.2]{rockafellar2009variational}.
\begin{lemma}\label{lem:epi-sequential}
	Let \(\phi_k,\phi:X\to\overline{\IR}\). Then
	\(\phi_k\xrightarrow{e}\phi\) on \(X\) if and only if, for every
	\(\bar{x}\in X\), the following conditions hold:
	\begin{enumerate}
		\item for every sequence \(\{x_k\}\subset X\) with
		\(x_k\to\bar{x}\),
		\[
		\liminf_{k\to\infty}\phi_k(x_k)
		\ge
		\phi(\bar{x});
		\]
		\item there exists a sequence \(\{x_k\}\subset X\) with
		\(x_k\to\bar{x}\) such that
		\[
		\limsup_{k\to\infty}\phi_k(x_k)
		\leq
		\phi(\bar{x}).
		\]
	\end{enumerate}
\end{lemma}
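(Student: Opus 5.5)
The plan is to prove the two implications of Lemma~\ref{lem:epi-sequential} separately. Each set-limit inclusion in Definition~\ref{def:epiconvergence} will be translated into a statement about sequences. As a preliminary, I will note that the outer limit always contains the inner limit. Epi-convergence is therefore equivalent to the pair of inclusions
\[
\operatorname*{Limsup}_{k\to\infty}\operatorname{epi}_X\phi_k\subseteq\operatorname{epi}_X\phi
\quad\text{and}\quad
\operatorname{epi}_X\phi\subseteq\operatorname*{Liminf}_{k\to\infty}\operatorname{epi}_X\phi_k .
\]
I will show that the first inclusion is equivalent to condition~1 and the second to condition~2.

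\emph{Condition~1 and the outer limit.} Assume condition~1, and take $(\bar x,r)$ in the outer limit. Then there are $k_j\to\infty$ and $(x_j,r_j)\in\operatorname{epi}_X\phi_{k_j}$ with $(x_j,r_j)\to(\bar x,r)$, and $\bar x\in X$ because $X$ is closed. Condition~1 is stated for full sequences, so I extend the subsequence: set $x_k:=x_j$ for $k=k_j$, and let $x_k$ be an arbitrary point of $X$ tending to $\bar x$ (for instance $\bar x$ itself) for all other $k$. Then
\[
\phi(\bar x)\le\liminf_{k\to\infty}\phi_k(x_k)\le\liminf_{j\to\infty}\phi_{k_j}(x_j)\le\lim_{j\to\infty}r_j=r .
\]
For the converse, suppose condition~1 fails along some $x_k\to\bar x$. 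Then there are a real $r<\phi(\bar x)$ and a subsequence with $\phi_{k_j}(x_{k_j})<r$. The points $(x_{k_j},r)$ lie in the epigraphs and converge to $(\bar x,r)\notin\operatorname{epi}_X\phi$, which contradicts the inclusion.

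\emph{Condition~2 and the inner limit.} Assume condition~2, and take $(\bar x,r)$ with $r\ge\phi(\bar x)$. Let $x_k$ be the recovery sequence and set $r_k:=\max\{r,\phi_k(x_k)\}$. Since $\limsup_k\phi_k(x_k)\le r$, we have $r_k\to r$, and $(x_k,r_k)\in\operatorname{epi}_X\phi_k$ once $\phi_k(x_k)<+\infty$, which holds for large $k$. For the converse, there are two cases. If $\phi(\bar x)=+\infty$, condition~2 is trivial. Otherwise I choose reals $r^{(n)}\downarrow\phi(\bar x)$, or $r^{(n)}\to-\infty$ if $\phi(\bar x)=-\infty$. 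For each $n$ the inner-limit inclusion gives $(x_k^{(n)},r_k^{(n)})\to(\bar x,r^{(n)})$ with $\phi_k(x_k^{(n)})\le r_k^{(n)}$. A standard diagonal construction then gives the result: pick increasing indices $K_n$ beyond which the $n$-th sequence is within $1/n$ of its limit, and use the $n$-th sequence for $K_n\le k<K_{n+1}$. This yields $x_k\to\bar x$ with $\limsup_k\phi_k(x_k)\le\phi(\bar x)$.

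The main obstacle is bookkeeping rather than ideas. The diagonalization in the converse of condition~2 needs care, and so do the extended values $\pm\infty$, which are handled through the reals $r^{(n)}$ and the case split above. Since this is exactly \citep[Proposition~7.2]{rockafellar2009variational}, I would ultimately cite it and only sketch the argument above.
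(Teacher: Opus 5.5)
Your proposal is correct and matches the paper, which gives no argument of its own and simply invokes Proposition~7.2 of Rockafellar--Wets, the same result you plan to cite. Your direct sketch is a sound verification of that proposition in the paper's relative setting. Closedness of $X$ keeps limits of epigraph points in $X\times\mathbb{R}$, and your treatment of $\pm\infty$ through real levels $r^{(n)}$, the truncation $r_k=\max\{r,\phi_k(x_k)\}$, and the diagonal construction all go through.
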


A key consequence of epi-convergence is that, if a sequence of global
minimizers of \(\phi_k\) converges, then its limit is a global minimizer of
\(\phi\); see
\citep[Proposition~4.6]{bonnans2013perturbation}.

\begin{lemma}\label{lem:epi-minimizers}
	Suppose that \(\phi_k\xrightarrow{e}\phi\) on \(X\),
	\(x_k\in\arg\min_{x\in X}\phi_k(x)\), and
	\(x_k\to\bar{x}\). Then
	\[
	\bar{x}\in\arg\min_{x\in X}\phi(x).
	\]
\end{lemma}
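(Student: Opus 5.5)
Let \(\bar{x}\in X\) be arbitrary; the plan is to show \(\phi(\bar{x})\ge\phi_k(x_k)\) for all large \(k\) in the limit, then pass to the limit using the two conditions of Lemma~\ref{lem:epi-sequential}. The argument has two halves.

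\emph{Upper bound on the limit value.} Condition~2 of Lemma~\ref{lem:epi-sequential}, applied at \(\bar{x}\), gives a recovery sequence \(\{z_k\}\subset X\) with \(z_k\to\bar{x}\) and
\[
\limsup_{k\to\infty}\phi_k(z_k)\le\phi(\bar{x}).
\]

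\emph{Lower bound on the limit value.} Condition~1 of Lemma~\ref{lem:epi-sequential}, applied at the limit point of the minimizers (still denoted \(\bar{x}\) here; \(\bar{x}\in X\) because \(X\) is closed), gives
\[
\liminf_{k\to\infty}\phi_k(x_k)\ge\phi(\bar{x}).
\]

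\emph{Comparison.} Now fix an arbitrary \(x\in X\). Condition~2 at \(x\) yields \(\{z_k\}\subset X\) with \(z_k\to x\) and \(\limsup_k\phi_k(z_k)\le\phi(x)\). Since \(x_k\) is a global minimizer of \(\phi_k\) over \(X\) and \(z_k\in X\), we have \(\phi_k(x_k)\le\phi_k(z_k)\) for every \(k\). Chaining the inequalities gives
\[
\phi(\bar{x})\le\liminf_{k\to\infty}\phi_k(x_k)\le\limsup_{k\to\infty}\phi_k(z_k)\le\phi(x).
\]
As \(x\in X\) was arbitrary, \(\bar{x}\in\arg\min_{x\in X}\phi(x)\).

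\emph{Main obstacle.} The only delicate point is the extended-real-valued setting. If \(\phi(x)=+\infty\), the inequality \(\phi(\bar{x})\le\phi(x)\) is trivial. If \(\phi(\bar{x})=-\infty\), \(\bar{x}\) is trivially a minimizer. Otherwise the chain above consists of ordinary inequalities in \(\overline{\IR}\), which remain valid there. I would also check that the membership \(\bar{x}\in X\) is needed to invoke condition~1, and this is guaranteed by closedness of \(X\). No further difficulty is expected; this is the standard argument of \citep[Proposition~4.6]{bonnans2013perturbation}.
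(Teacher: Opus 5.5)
Your argument is correct and is exactly the standard liminf/recovery-sequence proof behind \citep[Proposition~4.6]{bonnans2013perturbation}, which the paper cites without reproducing. Your opening sentence misstates the plan (here \(\bar{x}\) is the limit of the minimizers, not arbitrary), and the first ``upper bound'' paragraph is never used; still, the comparison chain, with \(\bar{x}\in X\) by closedness of \(X\), is complete.
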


\subsection{Constraint Qualifications and Stationarity Conditions}
\label{subsec:stNationarity-preliminaries}

This subsection introduces the
C-stationarity notion used in the convergence analysis. Suppose that the
upper- and lower-level feasible sets admit the representations
\[
X=\{x\in\mathbb R^n:G(x)\leq0\},\qquad
Y=\{y\in\mathbb R^m:g(y)\leq0\},
\]
where \(G:\mathbb R^n\to\mathbb R^q\) is continuously differentiable
with convex components and \(g:\mathbb R^m\to\mathbb R^p\) is twice
continuously differentiable with convex components.  With these explicit
representations, the pessimistic bilevel problem takes the form
\begin{equation}\label{PBOwithexplicitconstraint}
	\min_{G(x)\leq0}\ \max_{y\in\mathcal S(x)}F(x,y),
	\qquad
	\mathcal S(x):=\arg\min_{g(y)\leq0}f(x,y).
\end{equation}
For the lower-level problem, define the stationarity mapping
\[
\mathcal L(x,y,\mu)
:=\nabla_y f(x,y)+\nabla g(y)^\top\mu
\]
and the KKT multiplier set
\[
\Lambda(x,y):=\bigl\{\mu\in\mathbb R^p:
\mathcal L(x,y,\mu)=0,\ \mu\geq0,\ g(y)\leq0,
\ \mu^\top g(y)=0\bigr\}.
\]
We recall two constraint qualifications: the upper-level
Mangasarian-Fromovitz constraint qualification (MFCQ) and the lower-level
linear independence constraint qualification (LICQ).

\begin{definition}\label{def:level-mfcq}
	The upper-level MFCQ holds at a feasible point \(\bar x\) of PBO \eqref{PBOwithexplicitconstraint} if
	\[
	\pi\geq0,\qquad \pi^\top G(\bar x)=0,
	\qquad \nabla G(\bar x)^\top\pi=0
	\quad\Longrightarrow\quad \pi=0.
	\]
	The lower-level LICQ holds at $(\bar x,y)$, with
	$y\in\mathcal S(\bar x)$, if the active constraint gradients
	\[
	\{\nabla g_i(y):i=1,\ldots,p,\ g_i(y)=0\}
	\]
	are linearly independent.
\end{definition}

Fix \((\bar x,\bar y,\bar\mu)\) with
\(\bar y\in\mathcal S_p(\bar x)\) and
\(\bar\mu\in\Lambda(\bar x,\bar y)\).  The associated complementarity index sets are
\begin{equation}\label{index}
	\begin{aligned}
		\mathcal I_1(\bar y,\bar\mu)
		&:=\{i:\bar\mu_i=0,\ g_i(\bar y)<0\},\\
		\mathcal I_2(\bar y,\bar\mu)
		&:=\{i:\bar\mu_i=0,\ g_i(\bar y)=0\},\\
		\mathcal I_3(\bar y,\bar\mu)
		&:=\{i:\bar\mu_i>0,\ g_i(\bar y)=0\}.
	\end{aligned}
\end{equation}
We write $\mathcal I_j:=\mathcal I_j(\bar y,\bar\mu)$ for $j=1,2,3$.

We next recall C-stationarity, which is a necessary condition for local optimality of smooth
pessimistic bilevel programs under the assumptions stated in
\citep[Theorem~4.5]{dempe2014necessary} and
\citep[Theorem~4.1]{benchouk2025scholtes}.
The following definition specializes this condition to our setting and notation, where $f$ is assumed to be twice
continuously differentiable in a neighborhood of the reference
pair $(\bar x,\bar y)$.
\begin{definition}[C-stationarity]\label{def:C-stationarity}
	A feasible point \(\bar x\) is C-stationary of PBO \eqref{PBOwithexplicitconstraint} if there exist
	\(\bar y\in\mathcal S_p(\bar x)\),
	\(\bar\mu\in\Lambda(\bar x,\bar y)\), and multipliers
	\(\pi\in\mathbb R^q\), \(\omega\in\mathbb R^p\), and
	\(\gamma\in\mathbb R^m\) such that
	\begin{equation}\label{C-stationarity-system}
		\begin{gathered}
			\nabla_xF(\bar x,\bar y)
			+\nabla_x\mathcal L(\bar x,\bar y,\bar\mu)^\top\gamma
			+\nabla G(\bar x)^\top\pi=0,\\
			\nabla_yF(\bar x,\bar y)
			+\nabla_y\mathcal L(\bar x,\bar y,\bar\mu)^\top\gamma
			+\nabla g(\bar y)^\top\omega=0,\\
			\pi\geq0,\quad \pi^\top G(\bar x)=0,\qquad
			\omega_i=0,\ i\in\mathcal I_1,\quad
			\nabla g_i(\bar y)^\top\gamma=0,\ i\in\mathcal I_3,\\[-1mm]
			\omega_i\nabla g_i(\bar y)^\top\gamma\geq0,\quad i\in\mathcal I_2.
		\end{gathered}
	\end{equation}
\end{definition}

\section{Smooth Approximation of PBO}
\label{sec:smoothapproximation}
This section develops a differentiable approximation of the pessimistic
value function $\phi$.
We first construct the approximation and derive an explicit gradient formula,
then analyze its limiting behavior in terms of global minimizers and
stationary points.

\subsection{Construction of Smooth Approximation}	

Recall that, for every $x\in X$, the pessimistic value function $\phi(x)$ in
\eqref{pessimisticBLO} admits the equivalent constrained minimax
representation
\begin{equation}\label{constrainedminimax}
	\phi(x)=\min_{z\in Y}\max_{y\in Y}
	\bigl\{F(x,y): f(x,y)\leq f(x,z)\bigr\}.
\end{equation}
This reformulation technique was explored in \cite{zeng2020practical}.
It supports a value-function-based approach to designing computational
methods for PBO.

Based on this reformulation, we develop a smooth approximation of $\phi(x)$.
To remove the coupled constraint in \eqref{constrainedminimax}, we introduce
a penalty parameter $\rho>0$ and consider the penalized problem
\[
\min_{z\in Y}\max_{y\in Y}
\bigl\{F(x,y)-\rho\bigl(f(x,y)-f(x,z)\bigr)\bigr\}.
\]
Although penalization removes the coupled constraint, the saddle-point
variables need not be unique, and the resulting value function may therefore
remain nonsmooth in $x$. To obtain a smooth and well-posed approximation,
we introduce quadratic regularization in both saddle-point variables,
together with a bilinear coupling term, to define the regularized saddle
function
\begin{equation}\label{psi}
	\psi_{\rho,\sigma,\delta}(x,y,z)
	:=F(x,y)-\rho\bigl(f(x,y)-f(x,z)\bigr)
	+\frac{\sigma}{2}\|z\|^2-\sigma\langle y,z\rangle
	-\frac{\delta}{2}\|y\|^2,
\end{equation}
where $\sigma>0$ and $\delta>0$ are regularization parameters. For each
fixed $x\in X$, $\psi_{\rho,\sigma,\delta}(x,\cdot,\cdot)$ is
$\delta$-strongly concave in $y$ and $\sigma$-strongly convex in $z$.
Under Assumptions~\ref{assum1} and \ref{assum2}, this strong
concavity--convexity ensures a unique saddle point, denoted by
$(y_{\rho,\sigma,\delta}^*(x),z_{\rho,\sigma,\delta}^*(x))$.
The regularized value function is therefore finite and well defined, and
the minimization and maximization can be interchanged:
\begin{equation}\label{phi}
	\phi_{\rho,\sigma,\delta}(x)
	:=\min_{z\in Y}\max_{y\in Y}\psi_{\rho,\sigma,\delta}(x,y,z)
	=\max_{y\in Y}\min_{z\in Y}\psi_{\rho,\sigma,\delta}(x,y,z).
\end{equation}

The following result establishes the differentiability needed for the
first-order methods developed later.
\begin{theorem}\label{differentiable}
	Let $\rho,\sigma,\delta>0$. Then the mapping
	$x\mapsto\phi_{\rho,\sigma,\delta}(x)$ is differentiable at every $x\in X$,
	with gradient
	\begin{equation}\label{gradient_phi}
		\nabla\phi_{\rho,\sigma,\delta}(x)
		=\nabla_xF\bigl(x,y_{\rho,\sigma,\delta}^*(x)\bigr)
		-\rho\nabla_xf\bigl(x,y_{\rho,\sigma,\delta}^*(x)\bigr)
		+\rho\nabla_xf\bigl(x,z_{\rho,\sigma,\delta}^*(x)\bigr),
	\end{equation}
	where $(y_{\rho,\sigma,\delta}^*(x),z_{\rho,\sigma,\delta}^*(x))$
	denotes the unique saddle point of the minimax problem in \eqref{phi}.
\end{theorem}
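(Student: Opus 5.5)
Fix $x\in X$ and abbreviate $\psi:=\psi_{\rho,\sigma,\delta}$, $(y^*,z^*):=(y^*_{\rho,\sigma,\delta}(x),z^*_{\rho,\sigma,\delta}(x))$. The plan is a Danskin-type sandwich argument that uses the saddle-point structure and needs no second-order information. Write $\phi_{\rho,\sigma,\delta}(x)=\psi(x,y^*,z^*)$. For $x'\in X$ near $x$ with saddle point $(y',z')$, the saddle inequalities give
\[
\psi(x',y^*,z')-\psi(x,y^*,z')\;\le\;\phi_{\rho,\sigma,\delta}(x')-\phi_{\rho,\sigma,\delta}(x)\;\le\;\psi(x',y',z^*)-\psi(x,y',z^*).
\]
For the upper bound, $\phi(x')=\max_y\min_z\psi(x',y,z)\le\max_y\psi(x',y,z^*)=\psi(x',y',z^*)$. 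This uses the special structure: the maximizer of $\psi(x',\cdot,z^*)$ need not be $y'$ in general, so I would bound it by $\psi(x',\hat y,z^*)$ with $\hat y:=\arg\max_y\psi(x',y,z^*)$. I would also use $\phi(x)\ge\psi(x,\hat y,z^*)$, since $z^*$ minimizes $\psi(x,y^*,\cdot)$ and $\phi(x)=\max_y\psi(x,y,z^*)$ by the saddle property. The lower bound is handled symmetrically with $\check z:=\arg\min_z\psi(x',y^*,z)$.

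Each difference is then $\langle\nabla_x\psi(\xi,\cdot,\cdot),x'-x\rangle$ by the mean value theorem. Since $\nabla_x\psi(x,y,z)=\nabla_xF(x,y)-\rho\nabla_xf(x,y)+\rho\nabla_xf(x,z)$ has no contribution from the regularization terms, the claimed formula follows, to first order, once $\hat y\to y^*$ and $\check z\to z^*$ as $x'\to x$, together with the Lipschitz continuity of $\nabla F$ and $\nabla f$ from Assumptions~\ref{assum1}--\ref{assum2}. Concretely, the error is bounded by $L(\|x'-x\|+\|\hat y-y^*\|+\|\check z-z^*\|)\|x'-x\|$, which is $o(\|x'-x\|)$.

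The main step is the continuity $\hat y\to y^*$ and $\check z\to z^*$, and more generally continuity of the saddle map. The first is immediate: $\psi(x',\cdot,z^*)$ is $\delta$-strongly concave on $Y$ and $y^*$ maximizes $\psi(x,\cdot,z^*)$. The optimality conditions $0\in-\nabla_y\psi(x,y^*,z^*)+N_Y(y^*)$ and $0\in-\nabla_y\psi(x',\hat y,z^*)+N_Y(\hat y)$ together with strong monotonicity give
\[
\delta\|\hat y-y^*\|\le\|\nabla_y\psi(x',y^*,z^*)-\nabla_y\psi(x,y^*,z^*)\|\le L(1+\rho)\|x'-x\|.
\]
The bound for $\check z$ follows in the same way from $\sigma$-strong convexity. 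This is the point where the hypotheses really enter: there is no compactness of $Y$, but strong concavity–convexity replaces it.

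This yields Fréchet differentiability relative to $X$ at every $x\in X$, with the stated gradient.
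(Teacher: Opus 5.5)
Your argument is correct, and it takes a genuinely different route from the paper.

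\textbf{The paper's route.} It applies the Danskin-type theorem of Bonnans--Shapiro (Theorem~4.13, Remark~4.14) twice.
\begin{enumerate}
\item First to $h(x,z):=\max_{y\in Y}\psi(x,y,z)$. Strong concavity plus an inf-compactness argument (built on the uniform coercivity lemma and Assumption~\ref{assum2}) gives a unique maximizer $\widehat y(x,z)$, continuous by the maximum theorem. Hence $h\in C^1$ with $\nabla_xh=\nabla_x\psi(x,\widehat y(x,z),z)$.
\item Then, using $\sigma$-strong convexity of $h(x,\cdot)$, to $\phi_{\rho,\sigma,\delta}(x)=\min_{z\in Y}h(x,z)$.
\end{enumerate}
Finally it identifies $\widehat y(x,z^*(x))=y^*(x)$ by uniqueness of the saddle point.

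\textbf{Your route.} You avoid both citations with an elementary sandwich built from the saddle inequalities. You replace the qualitative continuity of the argmax by the quantitative strong-monotonicity bounds $\delta\|\hat y-y^*\|\le(L_F+\rho L_f)\|x'-x\|$ and $\sigma\|\check z-z^*\|\le\rho L_f\|x'-x\|$.

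\textbf{What each approach buys.} Yours gives a self-contained proof with an explicit $O(\|x'-x\|^2)$ remainder. It also handles boundary points cleanly. The concavity, convexity and Lipschitz hypotheses are imposed only for $x\in X$, so differentiability relative to $X$ is what they actually support, whereas the paper works on full neighborhoods of $\bar x$. What you take for granted is that a saddle point exists at every $x'\in X$ (min--max equals max--min, with attainment). The statement presupposes this, and it also follows from strong monotonicity of the saddle operator on $Y\times Y$. The paper's proof spends its inf-compactness arguments on exactly this kind of well-posedness.

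\textbf{Two small remarks.}
\begin{enumerate}
\item Your first displayed sandwich is already valid as written, since $\psi(x',y',z')\le\psi(x',y',z^*)$ and $\psi(x,y',z^*)\le\psi(x,y^*,z^*)$. Only the intermediate equality $\max_y\psi(x',y,z^*)=\psi(x',y',z^*)$ was wrong. Keeping $y'$ would, however, require Lipschitz continuity of the full saddle map, as in Lemma~\ref{uxk+1xk}. Switching to $\hat y,\check z$ is therefore the more economical choice.
\item The inequality $\phi_{\rho,\sigma,\delta}(x)\ge\psi(x,\hat y,z^*)$ holds because $y^*$ maximizes $\psi(x,\cdot,z^*)$. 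It does not follow from $z^*$ minimizing $\psi(x,y^*,\cdot)$.
\end{enumerate}
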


\begin{remark}
	The bilinear coupling term $-\sigma\langle y,z\rangle$ in
	$\psi_{\rho,\sigma,\delta}$ provides the key link between the two
	saddle-point variables. Under the additional selection condition in the
	second assertion of Theorem~\ref{lim_yz}, it yields the common-limit
	property: if $y_k^*(x_k)\to\bar y$, then $z_k^*(x_k)\to\bar y$.
	This conclusion is subsequently used in Lemma~\ref{multiplierconverge}
	and Theorem~\ref{thm:Cstationarity}.
\end{remark}

\begin{remark}
	The construction in \eqref{psi} extends that of the conference
	version~\cite{qichao2025single} by adding the regularization term
	$-\frac{\delta}{2}\|y\|^2$. In that version, uniqueness of the saddle
	point relied on the strong concavity of $F(x,\cdot)$. Here,
	$F(x,\cdot)$ need only be concave. This weaker assumption covers a broader class
	of problems, including those in which the upper-level objective is affine
	in $y$; see, e.g., \cite{bucareydecision,kis2021optimistic}.
\end{remark}

\subsection{Convergence of Minimizers}
\label{subsec:epiconvergence}

Using the smooth approximation $\phi_{\rho,\sigma,\delta}$, we consider
the approximate problems
\begin{equation}\label{MMrho}
	\min_{x\in X}\;\phi_{\rho,\sigma,\delta}(x).
\end{equation}
To analyze their limiting behavior, let the parameter sequences satisfy
\begin{equation}\label{paramcriterion}
	\rho_k,\sigma_k,\delta_k>0,\qquad
	\rho_k\to\infty,\qquad
	\sigma_k\to0,\qquad
	\delta_k\to0,\qquad
	\limsup_{k\to\infty}\frac{\sigma_k}{\delta_k}<\infty.
\end{equation}
Here, $\rho_k\to\infty$ asymptotically enforces lower-level optimality
through penalization, while $\sigma_k,\delta_k\to0$ eliminate the
regularization bias. The final condition controls the $\sigma_k$-weighted
coupling and regularization terms relative to the $\delta_k$-regularization.
For notational convenience, throughout the remainder of the paper we write
$\phi_k(x)$, $\psi_k(x,y,z)$, $y_k^*(x)$, and $z_k^*(x)$ for
$\phi_{\rho_k,\sigma_k,\delta_k}(x)$,
$\psi_{\rho_k,\sigma_k,\delta_k}(x,y,z)$,
$y_{\rho_k,\sigma_k,\delta_k}^*(x)$, and
$z_{\rho_k,\sigma_k,\delta_k}^*(x)$, respectively.

We begin by establishing the limiting upper-bound relationship between
$\phi_k(x)$ and $\phi(x)$ required for epi-convergence in
Lemma~\ref{lem:epi-sequential}.

\begin{lemma}\label{lem:limsup}
	Suppose that the parameters satisfy \eqref{paramcriterion}. Then, for every
	$x\in X$,
	\begin{equation}\label{limsupphiapp}
		\limsup_{k\to\infty}\phi_k(x)\leq\phi(x).
	\end{equation}
\end{lemma}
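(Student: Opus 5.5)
Fix $x\in X$. The plan is to bound $\phi_k(x)$ from above by choosing a particular $z$ in the outer minimization and controlling the resulting inner maximum. Since $\phi_k(x)=\min_{z\in Y}\max_{y\in Y}\psi_k(x,y,z)$, I would fix some $\bar z\in\mathcal S(x)$ (nonempty by Assumption~\ref{assum2}). Then
\[
\phi_k(x)\le \max_{y\in Y}\Bigl\{F(x,y)-\rho_k\bigl(f(x,y)-f(x,\bar z)\bigr)-\sigma_k\langle y,\bar z\rangle-\tfrac{\delta_k}{2}\|y\|^2\Bigr\}+\tfrac{\sigma_k}{2}\|\bar z\|^2 .
\]
Here $f(x,y)-f(x,\bar z)\ge0$ for all $y\in Y$, and equality holds exactly when $y\in\mathcal S(x)$. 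Let $y_k$ attain the max, which exists by strong concavity. Using $-\sigma_k\langle y,\bar z\rangle-\frac{\delta_k}{2}\|y\|^2\le \frac{\sigma_k^2}{2\delta_k}\|\bar z\|^2$, the regularization terms contribute at most $\frac{\sigma_k^2}{2\delta_k}\|\bar z\|^2+\frac{\sigma_k}{2}\|\bar z\|^2$. Since $\sigma_k\to0$ and $\sigma_k/\delta_k$ is bounded, this is $O(\sigma_k)\to0$.

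This reduces the claim to $\limsup_k \max_{y\in Y}\{F(x,y)-\rho_k(f(x,y)-f(x,\bar z))+r_k(y)\}\le\phi(x)$, where $r_k$ collects the regularization terms. To prove it, I would first show that $\{y_k\}$ is bounded, which is the main obstacle. The choice $y=\bar z$ gives the lower bound
\[
F(x,y_k)-\rho_k\bigl(f(x,y_k)-f(x,\bar z)\bigr)-\sigma_k\langle y_k,\bar z\rangle-\tfrac{\delta_k}{2}\|y_k\|^2\ \ge\ F(x,\bar z)-\sigma_k\|\bar z\|^2-\tfrac{\delta_k}{2}\|\bar z\|^2 .
\]
Dropping the nonpositive penalty term and the term $-\frac{\delta_k}{2}\|y_k\|^2$, we get $F(x,y_k)\ge F(x,\bar z)-C-\sigma_k\|\bar z\|\|y_k\|$. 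Because $F(x,\cdot)$ is concave and coercive on $Y$, it decays at least linearly at infinity: there are $c>0$ and $C'$ with $F(x,y)\le C'-c\|y\|$ on $Y$. Since $\sigma_k\to0$, this forces $\{y_k\}$ to be bounded.

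With boundedness in hand, $F(x,y_k)$ is bounded above, so the same inequality gives $\rho_k(f(x,y_k)-f(x,\bar z))\le C''$. Hence $f(x,y_k)-\min_Y f(x,\cdot)\to0$. Any cluster point $\hat y$ of $\{y_k\}$ then lies in $Y$ (closedness) and in $\mathcal S(x)$ by continuity of $f$.

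Finally, take a subsequence realizing the limsup, and pass to a further subsequence with $y_k\to\hat y$. Since the penalty is nonpositive and the regularization contributions vanish, the limsup is at most $F(x,\hat y)\le\max_{y\in\mathcal S(x)}F(x,y)=\phi(x)$, which yields \eqref{limsupphiapp}.

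The key technical step is the linear-growth consequence of concavity plus coercivity. I would prove it by restricting $F(x,\cdot)$ to the compact set $\{y\in Y:\|y-\bar z\|=R\}$ on which $F(x,\cdot)<F(x,\bar z)-1$, and then extending along rays from $\bar z$ by concavity.
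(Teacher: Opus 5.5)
Your proof is correct, but it follows a different route from the paper's. The paper works with the true saddle point. It argues by contradiction from $\phi_k(x)=\min_{z\in Y}\psi_k(x,y_k^*,z)\le\psi_k(x,y_k^*,y_k^*)=F(x,y_k^*)-\frac{\sigma_k+\delta_k}{2}\|y_k^*\|^2$. This requires two auxiliary facts about $y_k^*=y_k^*(x)$:
\begin{itemize}
\item uniform boundedness (Lemma~\ref{saddlepointboundedness}, proved via Lemma~\ref{coerciveuniform} and the ratio condition $\limsup_k\sigma_k/\delta_k<\infty$);
\item the inclusion of its cluster points in $\mathcal S(x)$ (Lemma~\ref{valueofy}, whose proof is deferred to the conference version).
\end{itemize}
The second fact needs a separate argument, because the saddle-point penalty compares $f(x,y_k^*)$ with $f(x,z_k^*)$, and $z_k^*$ need not be a lower-level minimizer.

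You instead freeze the outer variable at $\bar z\in\mathcal S(x)$. This makes the penalty nonpositive on all of $Y$ and reduces the problem to a single strongly concave maximization in $y$. Lower-level optimality of the cluster points of $y_k$ then follows in one line from $\rho_k(f(x,y_k)-f(x,\bar z))\le C''$. Your argument is therefore self-contained and more elementary for this pointwise statement. The paper's is shorter given machinery it needs anyway for Theorem~\ref{lim_yz}, where the actual saddle points along a moving sequence $x_k\to\bar x$ must be controlled.

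A small remark on your key technical step: the linear-decay estimate is more than boundedness requires. Keeping $-\frac{\delta_k}{2}\|y_k\|^2$ and reusing your bound $-\sigma_k\langle y_k,\bar z\rangle-\frac{\delta_k}{2}\|y_k\|^2\le\frac{\sigma_k^2}{2\delta_k}\|\bar z\|^2$ gives $F(x,y_k)\ge F(x,\bar z)-C$ directly, and plain coercivity finishes. Conversely, your route buys something. Once $\{y_k\}$ is bounded via linear decay, the last step only needs $\sigma_k\langle y_k,\bar z\rangle\to0$ and $-\frac{\delta_k}{2}\|y_k\|^2\le0$. So the ratio condition can be dropped from this lemma altogether.
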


The complementary lower estimate requires lower semicontinuity of 
pessimistic value function.
\begin{lemma}\label{lem_liminf}
	Suppose that the parameters satisfy \eqref{paramcriterion} and that $\phi$
	is lower semicontinuous on $X$. Then, for every $\bar x\in X$ and every
	sequence $\{x_k\}\subset X$ satisfying $x_k\to\bar x$,
	\begin{equation}\label{limsupepi}
		\liminf_{k\to\infty}\phi_k(x_k)\geq\phi(\bar x).
	\end{equation}
\end{lemma}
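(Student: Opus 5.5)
The plan is to bound $\phi_k(x_k)$ from below by testing the outer maximization in \eqref{phi} with a pessimistic lower-level solution. Then $\liminf_{k\to\infty}\phi_k(x_k)\ge\phi(\bar x)$ follows from lower semicontinuity of $\phi$, provided the regularization error vanishes.

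\emph{Step 1 (pointwise lower bound).} Fix $x\in X$ and pick $p\in\mathcal S_p(x)$. By the max--min form in \eqref{phi},
\[
\phi_k(x)\ \ge\ \min_{z\in Y}\psi_k(x,p,z).
\]
Since $p\in\mathcal S(x)$, we have $f(x,p)-f(x,z)\le 0$ for every $z\in Y$, so the penalty term $-\rho_k\bigl(f(x,p)-f(x,z)\bigr)$ is nonnegative. Moreover,
\[
\min_{z\in\IR^m}\Bigl\{\tfrac{\sigma_k}{2}\|z\|^2-\sigma_k\langle p,z\rangle\Bigr\}=-\tfrac{\sigma_k}{2}\|p\|^2 .
\]
Combining these gives
\[
\phi_k(x)\ \ge\ F(x,p)-\tfrac{\sigma_k+\delta_k}{2}\|p\|^2=\phi(x)-\tfrac{\sigma_k+\delta_k}{2}\|p\|^2 .
\]
Apply this with $x=x_k$ and some $p_k\in\mathcal S_p(x_k)$. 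Because $\sigma_k,\delta_k\to0$, it suffices to show that $\{p_k\}$ is bounded. Then $\liminf_{k\to\infty}\phi_k(x_k)\ge\liminf_{k\to\infty}\phi(x_k)\ge\phi(\bar x)$ by lower semicontinuity. The ratio condition on $\sigma_k/\delta_k$ in \eqref{paramcriterion} is not even needed here.

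\emph{Step 2 (boundedness of $p_k$; the main obstacle).} The coercivity in Assumption~\ref{assum1} is only pointwise in $x$, so uniform boundedness must be extracted from concavity. First, since $\{x_k\}$ is bounded, Assumption~\ref{assum2} gives a bounded set $D_y$ and points $w_k\in\mathcal S(x_k)\cap D_y$. By continuity of $F$ on the compact set $\overline{\{x_k\}}\times\overline{D_y}$, there is $M$ with $F(x_k,w_k)\ge -M$, and hence
\[
F(x_k,p_k)\ \ge\ F(x_k,w_k)\ \ge\ -M .
\]

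Next, coercivity of $-F(\bar x,\cdot)$ lets us choose $R$ large so that the ball $B_R:=\{\|y\|\le R\}$ contains $D_y$ and $F(\bar x,y)\le -M-2$ on the compact set $\{y\in Y:\|y\|=R\}$. This set is nonempty if any $p_k$ leaves $B_R$, since the segment from $w_k\in B_R$ to $p_k$ lies in the convex set $Y$. By uniform continuity of $F$ on compact sets, $F(x_k,y)\le -M-1$ on that sphere for all large $k$.

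Now suppose $\|p_k\|>R$. The segment $[w_k,p_k]\subset Y$ meets the sphere at some point $y$. Concavity of $F(x_k,\cdot)$ then gives
\[
F(x_k,y)\ \ge\ \min\{F(x_k,w_k),F(x_k,p_k)\}\ \ge\ -M,
\]
which contradicts $F(x_k,y)\le -M-1$. Hence $\|p_k\|\le R$ for all large $k$.

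\emph{Step 3 (conclusion).} Combining Steps 1 and 2,
\[
\phi_k(x_k)\ \ge\ \phi(x_k)-\tfrac{\sigma_k+\delta_k}{2}R^2
\]
for all large $k$. Taking $\liminf$ and using lower semicontinuity of $\phi$ at $\bar x$ yields \eqref{limsupepi}.
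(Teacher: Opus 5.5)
Your proof is correct and follows the paper's route. Step~1 is exactly Lemma~\ref{lem_a5}; the paper uses the minimum-norm element of $\mathcal S_p(x)$, but any element works, as you show. Step~2 reproduces Lemma~\ref{lem_a4}, where the paper gets uniform coercivity from Lemma~\ref{coerciveuniform} by a recession-direction contradiction instead of your direct sphere-crossing argument along the sequence; both use concavity on a segment from a bounded anchor point, and you are right that neither $\rho_k\to\infty$ nor the ratio condition is needed.
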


Suppose that $\phi$ is lower semicontinuous on $X$. For every
$\bar x\in X$, Lemma~\ref{lem_liminf} gives the lower-bound condition
in Lemma~\ref{lem:epi-sequential}, whereas Lemma~\ref{lem:limsup},
applied to the constant sequence $x_k\equiv\bar x$, gives the upper-bound
condition. Consequently, $\phi_k\xrightarrow{e}\phi$ on $X$.
Applying Lemma~\ref{lem:epi-minimizers} then yields the convergence
result for global minimizers stated below.

\begin{theorem}\label{convergethm}
	Suppose that the parameters satisfy \eqref{paramcriterion} and that $\phi$
	is lower semicontinuous on $X$. For each $k$, let
	$x_k\in\arg\min_{x\in X}\phi_k(x)$. Then every accumulation point $\bar x$
	of $\{x_k\}$ is a global solution of the original PBO; that is,
	$\bar x\in\arg\min_{x\in X}\phi(x)$.
\end{theorem}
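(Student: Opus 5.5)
The plan is to assemble Theorem~\ref{convergethm} from the three ingredients already established: the pointwise upper estimate of Lemma~\ref{lem:limsup}, the sequential lower estimate of Lemma~\ref{lem_liminf}, and the minimizer-stability result of Lemma~\ref{lem:epi-minimizers}. The bridge between them is epi-convergence, checked through the sequential characterization in Lemma~\ref{lem:epi-sequential}. The only real subtlety is that $\bar x$ is an accumulation point rather than a limit of the whole sequence, so we must pass to a subsequence and make sure epi-convergence survives that passage.

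First we verify $\phi_k\xrightarrow{e}\phi$ on $X$. Fix $\bar x\in X$.
\begin{itemize}
\item For condition 1 of Lemma~\ref{lem:epi-sequential}, take any sequence $\{x_k\}\subset X$ with $x_k\to\bar x$. Lemma~\ref{lem_liminf} applies, since $\phi$ is lower semicontinuous and the parameters satisfy \eqref{paramcriterion}, and gives $\liminf_k\phi_k(x_k)\ge\phi(\bar x)$.
\item For condition 2, take the constant recovery sequence $x_k\equiv\bar x$. Lemma~\ref{lem:limsup} gives $\limsup_k\phi_k(\bar x)\le\phi(\bar x)$.
\end{itemize}
Hence $\phi_k\xrightarrow{e}\phi$ on $X$, with all functions real-valued on $X$ because the saddle points exist.

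Next, let $\bar x$ be an accumulation point of $\{x_k\}$, so that $x_{k_j}\to\bar x$ along some subsequence. Because $\bar x\in X$ and $X$ is closed, $\bar x$ is feasible. The parameter subsequence $(\rho_{k_j},\sigma_{k_j},\delta_{k_j})$ still satisfies \eqref{paramcriterion}: it has $\rho_{k_j}\to\infty$ and $\sigma_{k_j},\delta_{k_j}\to0$, and a $\limsup$ along a subsequence is at most the full $\limsup$. The previous step therefore applies verbatim and gives $\phi_{k_j}\xrightarrow{e}\phi$. One could alternatively use the general fact that epi-limits pass to subsequences, but re-running the argument is cleaner.

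Finally, $x_{k_j}\in\arg\min_X\phi_{k_j}$ and $x_{k_j}\to\bar x$, so Lemma~\ref{lem:epi-minimizers} yields $\bar x\in\arg\min_X\phi$.

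Even without invoking that lemma, the conclusion follows directly. For any $x\in X$,
\[
\phi(\bar x)\le\liminf_j\phi_{k_j}(x_{k_j})\le\limsup_j\phi_{k_j}(x)\le\phi(x).
\]
The first inequality is Lemma~\ref{lem_liminf}, the second is minimality of $x_{k_j}$, and the third is Lemma~\ref{lem:limsup}. This inline chain serves as a self-contained fallback.

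The main obstacle is only this subsequence bookkeeping. With the direct chain above, no step is hard.
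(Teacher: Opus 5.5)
Your proposal is correct and follows the paper's argument: you obtain epi-convergence from Lemma~\ref{lem_liminf} and from Lemma~\ref{lem:limsup} with the constant recovery sequence, checked through Lemma~\ref{lem:epi-sequential}, and then apply Lemma~\ref{lem:epi-minimizers}. Your explicit passage to a subsequence is valid and fills in a step the paper leaves implicit, since Lemma~\ref{lem:epi-minimizers} is stated for a convergent sequence; your direct three-inequality chain is also valid.
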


The preceding result concerns only the upper-level variable. Since the
gradient formula~\eqref{gradient_phi} also involves the associated
saddle-point variables, we next characterize their behavior along a
convergent upper-level sequence.

\begin{theorem}\label{lim_yz}
	Suppose that the parameters satisfy \eqref{paramcriterion}. Let
	$\{x_k\}\subset X$ satisfy $x_k\to\bar x$, and assume that $\phi$ is lower
	semicontinuous at $\bar x$. Then every accumulation point of
	$\{y_k^*(x_k)\}$ belongs to $\mathcal S_p(\bar x)$.
	
	Moreover, suppose that $y_k^*(x_k)\to\bar y$ and that there exists a
	sequence $\{\hat y_k\}$ such that $\hat y_k\in\mathcal S(x_k)$ and
	$\hat y_k\to\bar y$. Then $z_k^*(x_k)\to\bar y$. In particular, if
	$\mathcal S$ is inner semicontinuous at $\bar x$, then the lower
	semicontinuity hypothesis follows from Lemma~\ref{lem:isc-implies-lsc}, and
	the existence of such a sequence $\{\hat y_k\}$ is automatic whenever
	$y_k^*(x_k)\to\bar y$.
\end{theorem}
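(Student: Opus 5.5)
Throughout, write $y_k:=y_k^*(x_k)$ and $z_k:=z_k^*(x_k)$, and let $v(x):=\min_{y\in Y}f(x,y)$ denote the lower-level value function. The starting point is the saddle-point inequalities
\[
\psi_k(x_k,y,z_k)\le\psi_k(x_k,y_k,z_k)=\phi_k(x_k)\le\psi_k(x_k,y_k,z)\qquad\forall\,y,z\in Y.
\]
The plan is to test the right inequality against a bounded lower-level solution. This should yield an upper bound on $\phi_k(x_k)$ in terms of $F(x_k,y_k)$ and the penalty term. That bound will then be combined with the lower estimate of Lemma~\ref{lem_liminf}.

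\textbf{First assertion.} Since $\{x_k\}$ is bounded, Assumption~\ref{assum2} provides a bounded set $D_y$ and points $s_k\in\mathcal S(x_k)\cap D_y$. Plugging $z=s_k$ into the right saddle inequality gives
\[
\phi_k(x_k)\le F(x_k,y_k)-\rho_k\bigl(f(x_k,y_k)-v(x_k)\bigr)+\tfrac{\sigma_k}{2}\|s_k\|^2-\sigma_k\langle y_k,s_k\rangle-\tfrac{\delta_k}{2}\|y_k\|^2 .
\]
Fix a subsequence along which $y_k\to\bar y$; note that $\bar y\in Y$ because $Y$ is closed. Along it, the terms involving $\sigma_k$ and $\delta_k$ vanish, since $s_k$ and $y_k$ are bounded and $\sigma_k,\delta_k\to0$. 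The penalty term is nonpositive because $f(x_k,y_k)\ge v(x_k)$. Lemma~\ref{lem_liminf} gives $\liminf\phi_k(x_k)\ge\phi(\bar x)$, and this estimate is where lower semicontinuity of $\phi$ at $\bar x$ enters. From it we extract two conclusions:
\begin{itemize}
\item[(a)] $F(\bar x,\bar y)\ge\phi(\bar x)$, by dropping the nonpositive penalty term.
\item[(b)] $\rho_k\bigl(f(x_k,y_k)-v(x_k)\bigr)$ stays bounded above, because the remaining terms are bounded.
\end{itemize}
From (b) and $\rho_k\to\infty$ we get $f(x_k,y_k)-v(x_k)\to0$ along the subsequence. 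Next, $v$ is upper semicontinuous, being an infimum of continuous functions of $x$; hence $f(\bar x,\bar y)\le\limsup v(x_k)\le v(\bar x)$, so $\bar y\in\mathcal S(\bar x)$. Then $F(\bar x,\bar y)\le\phi(\bar x)$ by the definition of $\phi$, and together with (a) this gives $\bar y\in\mathcal S_p(\bar x)$.

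\textbf{Second assertion.} Here I would use only the $z$-part of the saddle problem. For fixed $(x_k,y_k)$, $z_k$ minimizes over $Y$ the function $\rho_kf(x_k,z)+\frac{\sigma_k}{2}\|z-y_k\|^2$, which equals $\psi_k(x_k,y_k,\cdot)$ up to a constant after completing the square. Comparing with the feasible point $\hat y_k\in\mathcal S(x_k)$ gives
\[
\rho_kf(x_k,z_k)+\tfrac{\sigma_k}{2}\|z_k-y_k\|^2\le\rho_kf(x_k,\hat y_k)+\tfrac{\sigma_k}{2}\|\hat y_k-y_k\|^2 .
\]
Since $f(x_k,z_k)\ge f(x_k,\hat y_k)$, the $f$-terms can be cancelled, leaving $\|z_k-y_k\|\le\|\hat y_k-y_k\|\to0$. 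Hence $z_k\to\bar y$.

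\textbf{Final claim.} If $\mathcal S$ is inner semicontinuous at $\bar x$, Lemma~\ref{lem:isc-implies-lsc} gives lower semicontinuity of $\phi$ at $\bar x$. Moreover, the first assertion shows $\bar y\in\mathcal S(\bar x)$, so inner semicontinuity itself supplies $\hat y_k\in\mathcal S(x_k)$ with $\hat y_k\to\bar y$.

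\textbf{Main obstacle.} The delicate step is the first assertion, specifically controlling the regularization terms $\sigma_k\langle y_k,s_k\rangle$. This requires the $s_k$ to be chosen inside the bounded set $D_y$ from Assumption~\ref{assum2}, and it requires arguing along a subsequence on which $y_k$ converges, so that $y_k$ is bounded there. The proof must also invoke Lemma~\ref{lem_liminf} only at the single point $\bar x$; if that lemma needs lower semicontinuity on all of $X$, I would re-derive its argument locally at $\bar x$.
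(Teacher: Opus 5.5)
Your proof is correct. The second assertion and the final claim follow the paper. Your direct estimate $\|z_k-\bar y\|\le\|\hat y_k-y_k\|+\|y_k-\bar y\|$ even removes the paper's detour through boundedness and accumulation points of $\{z_k^*\}$, and you make explicit why inner semicontinuity supplies $\{\hat y_k\}$, which the paper leaves implicit.

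The first assertion is where the routes differ. The paper tests the $y$-side saddle inequality $\psi_k(x_k,y_k^*,z_k^*)\ge\psi_k(x_k,\tilde y_k,z_k^*)$ at a pessimistic response $\tilde y_k\in\mathcal S_p(x_k)$. This gives $\liminf_k\bigl(F(x_k,y_k^*)-F(x_k,\tilde y_k)\bigr)\ge0$, hence $F(\bar x,\bar y)\ge\phi(\bar x)$ by lower semicontinuity. The paper then imports $\bar y\in\mathcal S(\bar x)$ separately from Lemma~\ref{valueofy}. Controlling the regularization terms there requires the uniform bounds on $\tilde y_k$ and $z_k^*$ from Lemmas~\ref{lem_a4} and~\ref{saddlepointboundedness}.

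You instead sandwich $\phi_k(x_k)$ between the lower bound of Lemma~\ref{lem_liminf} and the $z$-side inequality tested at $s_k\in\mathcal S(x_k)\cap D_y$. A single chain then yields both $F(\bar x,\bar y)\ge\phi(\bar x)$ and the penalty-gap estimate $0\le f(x_k,y_k)-v(x_k)=O(\rho_k^{-1})$. Upper semicontinuity of $v$ then gives lower-level optimality of $\bar y$.

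What each route buys:
\begin{itemize}
\item Yours is more self-contained. It re-derives the content of Lemma~\ref{valueofy} (whose proof is omitted in this paper), never needs bounds on $z_k^*$, and produces a rate for the lower-level gap.
\item The paper's is more modular, and it shows that feasibility of $\bar y$ requires no semicontinuity of $\phi$ at all.
\end{itemize}
In your argument too, the feasibility step uses lower semicontinuity only to bound $\phi_k(x_k)$ from below. The completing-the-square estimate behind Lemma~\ref{lem_a5}, applied at $s_k$, already provides this: $\phi_k(x_k)\ge F(x_k,s_k)-\frac{\sigma_k+\delta_k}{2}\|s_k\|^2$.

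Finally, your concern about citing Lemma~\ref{lem_liminf} is harmless: its proof uses lower semicontinuity only at the limit point $\bar x$.
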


\subsection{Convergence of Stationary Points}\label{sec:stationary}
We now complement the convergence of minimizers in
Subsection~\ref{subsec:epiconvergence} with a convergence result for approximate
stationary points. Under suitable local regularity assumptions, their
accumulation points are C-stationary for the original PBO. We use the
explicit inequality representations of $X$ and $Y$ from
Subsection~\ref{subsec:stNationarity-preliminaries}.

We first establish convergence of the saddle-point	variables and their associated lower-level KKT multipliers.
\begin{lemma}\label{multiplierconverge}
	Suppose that the parameters satisfy \eqref{paramcriterion}, that
	$\{x_k\}\subset X$ satisfies $x_k\to\bar x$, and that $\mathcal S$ is inner semicontinuous at
	$\bar x$. Let $\bar y$ be an accumulation point of
	$\{y_k^*(x_k)\}$ and, after passing to a subsequence, suppose that
	$y_k^*(x_k)\to\bar y$. Assume that $f$ is twice continuously
	differentiable in a neighborhood of $(\bar x,\bar y)$, and that lower-level LICQ holds at $(\bar x,\bar y)$.
	Then there exists a unique $\bar\mu\in\Lambda(\bar x,\bar y)$, and
	$z_k^*(x_k)\to\bar y$ with $\bar y\in\mathcal S_p(\bar x)$.
	Moreover, for all sufficiently large $k$, there exist unique
	multipliers $\lambda_k,\mu_k\in\mathbb R^p$ satisfying
	\begin{align}
		&\begin{cases}
			-\nabla_yF(x_k,y_k^*(x_k))
			+\rho_k\mathcal L(x_k,y_k^*(x_k),\lambda_k)
			+\sigma_k z_k^*(x_k)+\delta_k y_k^*(x_k)=0,\\[0.3em]
			\lambda_k\geq0,\quad
			\lambda_k^\top g(y_k^*(x_k))=0,\quad
			g(y_k^*(x_k))\leq0,
		\end{cases}\label{optconditiony}\\
		&\begin{cases}
			\rho_k\mathcal L(x_k,z_k^*(x_k),\mu_k)
			+\sigma_k\bigl(z_k^*(x_k)-y_k^*(x_k)\bigr)=0,\\[0.3em]
			\mu_k\geq0,\quad
			\mu_k^\top g(z_k^*(x_k))=0,\quad
			g(z_k^*(x_k))\leq0.
		\end{cases}\label{optconditionz}
	\end{align}
	Furthermore, $\lambda_k\to\bar\mu$ and $\mu_k\to\bar\mu$.
\end{lemma}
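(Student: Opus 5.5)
We first use Theorem~\ref{lim_yz}. Since $\mathcal S$ is inner semicontinuous at $\bar x$, Lemma~\ref{lem:isc-implies-lsc} makes $\phi$ lower semicontinuous at $\bar x$. The first assertion of Theorem~\ref{lim_yz} then gives $\bar y\in\mathcal S_p(\bar x)$. Inner semicontinuity also supplies $\hat y_k\in\mathcal S(x_k)$ with $\hat y_k\to\bar y$, so the second assertion gives $z_k^*(x_k)\to\bar y$. Because $\bar y\in\mathcal S(\bar x)$ minimizes the convex function $f(\bar x,\cdot)$ over $Y=\{g\le0\}$ and LICQ holds at $\bar y$, the KKT conditions hold and the multiplier is unique: $\Lambda(\bar x,\bar y)=\{\bar\mu\}$.

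Next we derive \eqref{optconditiony}--\eqref{optconditionz}. The active set is upper semicontinuous: for large $k$, every constraint active at $y_k^*(x_k)$ or $z_k^*(x_k)$ is active at $\bar y$. LICQ is an open condition and $g\in C^2$, so LICQ holds at $y_k^*(x_k)$ and at $z_k^*(x_k)$ for large $k$. The point $y_k^*(x_k)$ maximizes the concave function $\psi_k(x_k,\cdot,z_k^*(x_k))$ over $Y$, and $z_k^*(x_k)$ minimizes the convex function $\psi_k(x_k,y_k^*(x_k),\cdot)$ over $Y$. The KKT conditions therefore hold with unique multipliers $\tilde\lambda_k,\tilde\mu_k$. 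For the $y$-problem, minimizing $-\psi_k$ gives
\[
-\nabla_yF+\rho_k\nabla_yf+\sigma_k z+\delta_k y+\nabla g(y)^\top\tilde\lambda_k=0 .
\]
Setting $\lambda_k:=\tilde\lambda_k/\rho_k$ yields \eqref{optconditiony}. For the $z$-problem, $\nabla_z\psi_k=\rho_k\nabla_yf(x_k,z)+\sigma_k(z-y)$, and setting $\mu_k:=\tilde\mu_k/\rho_k$ yields \eqref{optconditionz}.

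It remains to show convergence of the multipliers; the key step is boundedness. Divide the stationarity equation in \eqref{optconditiony} by $\rho_k$:
\[
\nabla g(y_k^*)^\top\lambda_k=-\nabla_yf(x_k,y_k^*)+\rho_k^{-1}\bigl(\nabla_yF(x_k,y_k^*)-\sigma_kz_k^*-\delta_ky_k^*\bigr).
\]
The right-hand side converges to $-\nabla_yf(\bar x,\bar y)$, because $\nabla_yF$ is continuous, the saddle-point sequences are bounded, $\sigma_k,\delta_k\to0$ and $\rho_k\to\infty$. Suppose $\|\lambda_k\|\to\infty$ along a subsequence. Normalizing, $\lambda_k/\|\lambda_k\|\to\nu\ne0$ with $\nu\ge0$, and $\nu$ is supported on constraints active at $\bar y$ by the active-set inclusion. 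Dividing the displayed equation by $\|\lambda_k\|$ gives $\nabla g(\bar y)^\top\nu=0$, which contradicts LICQ at $\bar y$. Hence $\{\lambda_k\}$ is bounded. Any limit point $\lambda$ satisfies $\lambda\ge0$ and $\lambda^\top g(\bar y)=0$, since complementarity passes to the limit by continuity of $g$, and it satisfies $\mathcal L(\bar x,\bar y,\lambda)=0$. Thus $\lambda\in\Lambda(\bar x,\bar y)=\{\bar\mu\}$, and the whole sequence converges: $\lambda_k\to\bar\mu$.

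The same argument applies to $\mu_k$. Its perturbation term is $\rho_k^{-1}\sigma_k(z_k^*-y_k^*)\to0$, and $z_k^*(x_k)\to\bar y$, so $\mu_k\to\bar\mu$. The main obstacle is this boundedness and contradiction step. It depends on two facts: the saddle-point sequences converge to the same point $\bar y$, which is why the common-limit property from Theorem~\ref{lim_yz} is needed, and LICQ at $\bar y$ controls the active sets for large $k$.
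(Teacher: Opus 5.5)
Your proof is correct. Its first half coincides with the paper's. Both arguments invoke Theorem~\ref{lim_yz} via Lemma~\ref{lem:isc-implies-lsc} to get $z_k^*(x_k)\to\bar y$ and $\bar y\in\mathcal S_p(\bar x)$. Both obtain $\Lambda(\bar x,\bar y)=\{\bar\mu\}$ from LICQ, and both derive \eqref{optconditiony}--\eqref{optconditionz} from LICQ propagated to $y_k^*(x_k)$ and $z_k^*(x_k)$. The paper absorbs $\rho_k$ by writing $Y=\{\rho_k g\le 0\}$, while you rescale $\tilde\lambda_k/\rho_k$; this difference is cosmetic.

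The routes differ in the multiplier convergence. The paper restricts to the active set $\mathcal A^g$ and solves for $\lambda_k^{\mathcal A^g}$ and $\mu_k^{\mathcal A^g}$ explicitly. It uses the Moore--Penrose pseudoinverses of the full-column-rank matrices $\nabla g_{\mathcal A^g}(y_k^*)^\top$ and $\nabla g_{\mathcal A^g}(z_k^*)^\top$, passes to the limit by continuity of pseudoinverses under constant rank, and treats $\mathcal A^g=\emptyset$ separately. You instead prove boundedness by normalizing and deriving a contradiction with LICQ at $\bar y$. You then identify every limit point as an element of $\Lambda(\bar x,\bar y)=\{\bar\mu\}$ by closedness of the KKT system.

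Your argument is more elementary: it needs no perturbation theory for pseudoinverses and handles the empty active set automatically. Its boundedness step also uses only positive linear independence of the active gradients (i.e., MFCQ); LICQ enters only through uniqueness of the limit. The paper's explicit formula is more constructive. It lends itself to quantitative bounds on $\|\lambda_k-\bar\mu\|$ and $\|\mu_k-\bar\mu\|$ in terms of $\|y_k^*-\bar y\|$, $\|z_k^*-\bar y\|$, $\rho_k^{-1}$, $\sigma_k$, and $\delta_k$.
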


To control the scaled saddle-point differences, we impose a
\emph{uniform parametric error bound} in the sense of
\citep{ye1997exact}, using the lower-level stationarity residual.

\begin{assumption}\label{ass:lower-error-bound}
	At a given pair $(\bar x,\bar y)$ with $\bar y\in\mathcal S(\bar x)$,
	there exist neighborhoods $U$ of $\bar x$ and $V$ of $\bar y$, and a
	constant $\kappa>0$, such that
	\begin{equation}\label{eq:lower-solution-error-bound}
		\operatorname{dist}(y,\mathcal S(x))
		\leq\kappa\,\operatorname{dist}\bigl(0,\nabla_y f(x,y)+N_Y(y)\bigr),
		\quad x\in X\cap U,\ y\in Y\cap V.
	\end{equation}
\end{assumption}

The next lemma establishes boundedness of the
multiplier sequences and scaled differences, together with their limiting
C-stationarity relations.

\begin{lemma}\label{regularity}
	Suppose that the parameters satisfy \eqref{paramcriterion} and that
	$\{x_k\}\subset X$ satisfies $x_k\to\bar x$,
	$y_k^*(x_k),z_k^*(x_k)\to\bar y$, and
	$\lambda_k,\mu_k\to\bar\mu$, where $\bar\mu\in\Lambda(\bar x,\bar y)$ and
	$\lambda_k,\mu_k$ satisfy \eqref{optconditiony}--\eqref{optconditionz}.
	Assume that $f$ is twice continuously differentiable near
	$(\bar x,\bar y)$, that upper-level MFCQ holds at $\bar x$, and that
	lower-level LICQ and Assumption~\ref{ass:lower-error-bound}
	hold at $(\bar x,\bar y)$. If $r_k\to0$ and
	$\pi_k\in\mathbb R^q$ satisfy
	\begin{equation}\label{upper-kkt}
		\begin{gathered}
			\nabla_xF(x_k,y_k^*(x_k))
			\hspace{-0.1em}+\hspace{-0.1em}\rho_k(\nabla_xf(x_k,z_k^*(x_k))
			\hspace{-0.1em}-\hspace{-0.1em}\nabla_xf(x_k,y_k^*(x_k)))
			\hspace{-0.1em}+\hspace{-0.1em}\nabla G(x_k)^\top\pi_k\hspace{-0.1em}=\hspace{-0.1em}r_k,\\
			\pi_k\geq0,
			\qquad
			\pi_k^\top G(x_k)=0,
		\end{gathered}
	\end{equation}
	then $\{\pi_k\}$, $\{\gamma_k\}$, and $\{\omega_k\}$ are bounded, where
	$\gamma_k:=\rho_k\bigl(z_k^*(x_k)-y_k^*(x_k)\bigr)$ and
	$\omega_k:=\rho_k(\mu_k-\lambda_k)$.
	Every accumulation point $(\pi,\gamma,\omega)$ of
	$\{(\pi_k,\gamma_k,\omega_k)\}$ satisfies $\pi\geq0$,
	$\pi^\top G(\bar x)=0$, and
	\begin{equation}\label{scaled-limit-C-system}
		\begin{aligned}
			&\nabla_xF(\bar x,\bar y)
			+\nabla_x\mathcal L(\bar x,\bar y,\bar\mu)^\top\gamma
			+\nabla G(\bar x)^\top\pi=0,\\
			&\nabla_yF(\bar x,\bar y)
			+\nabla_y\mathcal L(\bar x,\bar y,\bar\mu)^\top\gamma
			+\nabla g(\bar y)^\top\omega=0,\\
			&\omega_i=0,\quad i\in\mathcal I_1,\qquad
			\nabla g_i(\bar y)^\top\gamma=0,\quad i\in\mathcal I_3,\qquad
			\omega_i\nabla g_i(\bar y)^\top\gamma\geq0,\quad i\in\mathcal I_2,
		\end{aligned}
	\end{equation}
	where $\mathcal I_j:=\mathcal I_j(\bar y,\bar\mu)$, $j=1,2,3$, are
	defined in \eqref{index}.
\end{lemma}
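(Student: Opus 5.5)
The plan is to subtract the two optimality systems \eqref{optconditiony} and \eqref{optconditionz}, so that the scaled quantities $\gamma_k$ and $\omega_k$ appear linearly. Write $y_k:=y_k^*(x_k)$ and $z_k:=z_k^*(x_k)$. Subtracting the first line of \eqref{optconditiony} from that of \eqref{optconditionz} gives
\[
\nabla_yF(x_k,y_k)+\rho_k\bigl(\mathcal L(x_k,z_k,\mu_k)-\mathcal L(x_k,y_k,\lambda_k)\bigr)-\sigma_k y_k-\delta_k y_k=0 .
\]
A mean-value expansion, using that $f$ is $C^2$ near $(\bar x,\bar y)$ and $g$ is $C^2$, turns this into
\[
\nabla_yF(x_k,y_k)+H_k\gamma_k+\nabla g(z_k)^\top\omega_k+o(1)=0,
\]
where $H_k=\nabla_y\mathcal L(\cdot,\cdot,\lambda_k)$ is evaluated along the segment and tends to $\nabla_y\mathcal L(\bar x,\bar y,\bar\mu)$. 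The $o(1)$ collects $\sigma_k y_k$, $\delta_k y_k$, and the remainder term $(\nabla g(z_k)-\nabla g(y_k))^\top\rho_k\lambda_k$, which is $O(\|\gamma_k\|)\cdot\|\lambda_k\|$ and so is absorbed into $H_k$. Similarly, \eqref{upper-kkt} becomes
\[
\nabla_xF(x_k,y_k)+\nabla_x\mathcal L(\cdot)^\top\gamma_k+\nabla G(x_k)^\top\pi_k=r_k+o(\|\gamma_k\|),
\]
using that $\lambda_k^\top\nabla g$ has no $x$-dependence.

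\textbf{Step 1: boundedness of $\gamma_k$ (main obstacle).} Here I would use Assumption~\ref{ass:lower-error-bound}. The second relation in \eqref{optconditiony} shows
\[
\operatorname{dist}\bigl(0,\nabla_yf(x_k,y_k)+N_Y(y_k)\bigr)\le \frac{\|\nabla_yF(x_k,y_k)\|+\sigma_k\|z_k\|+\delta_k\|y_k\|}{\rho_k}=O\!\left(\frac1{\rho_k}\right),
\]
since LICQ gives $\nabla g(y_k)^\top\lambda_k\in N_Y(y_k)$. Likewise \eqref{optconditionz} gives residual $\sigma_k\|z_k-y_k\|/\rho_k$ for $z_k$. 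Hence $\operatorname{dist}(y_k,\mathcal S(x_k))=O(1/\rho_k)$. To conclude $\|z_k-y_k\|=O(1/\rho_k)$, I would compare values. First, $z_k$ minimizes $\rho_kf(x_k,\cdot)+\frac{\sigma_k}{2}\|\cdot\|^2-\sigma_k\langle y_k,\cdot\rangle$, which is $\sigma_k$-strongly convex. Second, testing with $\hat y=\operatorname{Proj}_{\mathcal S(x_k)}(y_k)$ and with $y_k$ itself, and using Lipschitz gradients, yields $\rho_k(f(x_k,y_k)-f(x_k,\hat y))=O(1/\rho_k)$. The delicate point is that this only gives $\sigma_k$-scaled control. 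So instead I would argue by contradiction, normalizing by $t_k:=\|\gamma_k\|+\|\omega_k\|+\|\pi_k\|\to\infty$. Dividing the three relations by $t_k$ and passing to limits produces $(\gamma,\omega,\pi)\neq0$ satisfying the homogeneous C-system, with $\omega_i=0$ on $\mathcal I_1$ by complementarity for large $k$. Combined with the error bound, which in its scaled form forces $z_k-y_k$ to be controlled by the stationarity residuals, this gives a contradiction. Concretely, the error bound will give $\gamma=0$; LICQ applied to the $y$-equation then gives $\omega=0$; and MFCQ applied to the $x$-equation gives $\pi=0$. This is the step I expect to be hardest, and I would try first to derive $\gamma=0$ from the error bound by showing that $\gamma_k/t_k$ lies asymptotically in the critical directions where the residual vanishes to higher order.

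\textbf{Step 2: limits.} With boundedness in hand, I pass to a convergent subsequence and take limits in the expanded relations; this yields the first two equations of \eqref{scaled-limit-C-system}, and $\pi\ge0$, $\pi^\top G(\bar x)=0$ follow by continuity. For $i\in\mathcal I_1$, $g_i<0$ near $\bar y$ forces $\lambda_{k,i}=\mu_{k,i}=0$, so $\omega_i=0$. For $i\in\mathcal I_3$, $\lambda_{k,i},\mu_{k,i}>0$ eventually, so $g_i(y_k)=g_i(z_k)=0$; then $0=\rho_k(g_i(z_k)-g_i(y_k))\to\nabla g_i(\bar y)^\top\gamma$. For $i\in\mathcal I_2$, I would expand
\[
\omega_{k,i}\,\rho_k\bigl(g_i(z_k)-g_i(y_k)\bigr)=\rho_k^2(\mu_{k,i}-\lambda_{k,i})(g_i(z_k)-g_i(y_k)) .
\]
By complementarity this equals $-\rho_k^2\bigl(\mu_{k,i}g_i(y_k)+\lambda_{k,i}g_i(z_k)\bigr)\ge0$, and its limit is $\omega_i\nabla g_i(\bar y)^\top\gamma\ge0$.
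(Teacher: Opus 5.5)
Your Step~1, which is the heart of the lemma, is not actually proved. You correctly obtain $\operatorname{dist}(y_k,\mathcal S(x_k))=O(1/\rho_k)$ from Assumption~\ref{ass:lower-error-bound}. You then abandon the comparison with $\hat y_k:=\operatorname{Proj}_{\mathcal S(x_k)}(y_k)$, claiming it gives only $\sigma_k$-scaled control. That claim is mistaken. The $z$-subproblem is equivalent to minimizing $\rho_k f(x_k,z)+\frac{\sigma_k}{2}\|z-y_k\|^2$ over $Y$, so testing optimality of $z_k$ at $\hat y_k$ gives
\[
\rho_k f(x_k,z_k)+\tfrac{\sigma_k}{2}\|z_k-y_k\|^2\le \rho_k f(x_k,\hat y_k)+\tfrac{\sigma_k}{2}\|\hat y_k-y_k\|^2 .
\]
Since $\hat y_k$ is a global minimizer of $f(x_k,\cdot)$ on $Y$, the $f$-terms can be dropped, $\sigma_k$ cancels, and $\|z_k-y_k\|\le\operatorname{dist}(y_k,\mathcal S(x_k))$. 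Hence $\|\gamma_k\|\le\rho_k\operatorname{dist}(y_k,\mathcal S(x_k))=O(1)$ directly. This is exactly the paper's argument. After it, boundedness of $\omega_k$ (LICQ on the $y$-equation, with $\omega_k$ supported on active indices) and of $\pi_k$ (MFCQ on the $x$-equation) follow without any contradiction argument.

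The normalization argument you propose instead does not close the gap. You assert that ``the error bound will give $\gamma=0$'' for the normalized limit, but you give no mechanism, and none can work at the level of the limiting homogeneous system. Take $f\equiv0$: then $\mathcal S(x)=Y$, the error bound holds trivially, $\bar\mu=0$, and $\nabla_x\mathcal L(\bar x,\bar y,\bar\mu)=0$, $\nabla_y\mathcal L(\bar x,\bar y,\bar\mu)=0$. LICQ and MFCQ then force $\omega=0$ and $\pi=0$, but every unit vector $\gamma$ satisfies the homogeneous C-system. In this example $\gamma_k=0$ only because $z_k=\operatorname{Proj}_Y(y_k)=y_k$, that is, because of the proximal selection in the $z$-subproblem. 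The error bound controls the distances of $y_k$ and $z_k$ to $\mathcal S(x_k)$, not their separation along $\mathcal S(x_k)$. That separation is controlled only by the proximal term $\frac{\sigma_k}{2}\|z-y_k\|^2$, which your limiting argument discards.

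Your Step~2 (the limits, and the treatment of $\mathcal I_1,\mathcal I_2,\mathcal I_3$ via the product identity) is correct and matches the paper. A minor point: $\nabla g(y_k)^\top\lambda_k\in N_Y(y_k)$ follows from convexity of $g$ together with $\lambda_k\ge0$ and complementarity, not from LICQ.
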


\begin{proof}
	For brevity, write $y_k^*:=y_k^*(x_k)$ and
	$z_k^*:=z_k^*(x_k)$. We restrict attention to sufficiently large $k$
	so that the interpolation segments below lie in the neighborhood where
	$f$ is twice continuously differentiable. Subtracting the stationarity equation in
	\eqref{optconditiony} from that in \eqref{optconditionz} gives
	\begin{equation}\label{yz-difference-eq}
		\nabla_yF(x_k,y_k^*)-(\delta_k+\sigma_k)y_k^*
		+\rho_k\bigl(
		\mathcal L(x_k,z_k^*,\mu_k)
		-\mathcal L(x_k,y_k^*,\lambda_k)\bigr)=0.
	\end{equation}
	The corresponding complementarity conditions yield
	\begin{equation}\label{comp-difference-eq}
		\rho_k\bigl(\mu_k\circ g(z_k^*)
		-\lambda_k\circ g(y_k^*)\bigr)=0.
	\end{equation}
	Recall that $\circ$ denotes the componentwise product.
	For $t\in[0,1]$, set
	\[
	y_k(t):=y_k^*+t(z_k^*-y_k^*),
	\qquad
	\mu_k(t):=\lambda_k+t(\mu_k-\lambda_k),
	\]
	and define the averaged derivative matrices
	\[
	H_k:=\int_0^1\nabla_{xy}^2f(x_k,y_k(t))\,dt
	\]
	and
	\[
	J_k:=\int_0^1
	\begin{bmatrix}
		\nabla_y\mathcal L(x_k,y_k(t),\mu_k(t))^\top
		&\nabla g(y_k(t))^\top\\
		\operatorname{Diag}(\mu_k(t))\nabla g(y_k(t))
		&\operatorname{Diag}(g(y_k(t)))
	\end{bmatrix}\,dt
	=:
	\begin{bmatrix}
		J_k^{11}&J_k^{12}\\
		J_k^{21}&J_k^{22}
	\end{bmatrix},
	\]
	where $\operatorname{Diag}(a)$ denotes the diagonal matrix with diagonal
	$a$. Recall that
	$\gamma_k=\rho_k(z_k^*-y_k^*)$ and
	$\omega_k=\rho_k(\mu_k-\lambda_k)$. Since $f$ and $g$ are twice
	continuously differentiable near the limit point, the integral form of
	the mean-value theorem, applied along the segments defined above, gives
	\begin{align*}
		&\rho_k\bigl(\nabla_xf(x_k,z_k^*)-\nabla_xf(x_k,y_k^*)\bigr)
		=H_k\gamma_k,\\
		&\rho_k
		\begin{bmatrix}
			\mathcal L(x_k,z_k^*,\mu_k)-\mathcal L(x_k,y_k^*,\lambda_k)\\
			\mu_k\circ g(z_k^*)-\lambda_k\circ g(y_k^*)
		\end{bmatrix}
		=J_k\begin{bmatrix}\gamma_k\\\omega_k\end{bmatrix}.
	\end{align*}
	Combining these identities with
	\eqref{upper-kkt}, \eqref{yz-difference-eq}, and
	\eqref{comp-difference-eq}, we obtain
	\begin{equation}\label{Mk-system}
		M_k
		\begin{bmatrix}\pi_k\\\gamma_k\\\omega_k\end{bmatrix}
		=-
		\begin{bmatrix}
			\nabla_xF(x_k,y_k^*)-r_k\\
			\nabla_yF(x_k,y_k^*)-(\delta_k+\sigma_k)y_k^*\\
			0
		\end{bmatrix},\quad
		M_k:=
		\begin{bmatrix}
			\nabla G(x_k)^\top&H_k&0\\
			0&J_k^{11}&J_k^{12}\\
			0&J_k^{21}&J_k^{22}
		\end{bmatrix}.
	\end{equation}
	The segments $y_k(t)$ and $\mu_k(t)$ converge to $\bar y$ and
	$\bar\mu$, respectively, uniformly for $t\in[0,1]$. Continuity of
	the derivatives therefore yields
	\begin{equation}\label{eq:Mk-limit}
		M_k\to\bar M:=
		\begin{bmatrix}
			\nabla G(\bar x)^\top
			&\nabla_x\mathcal L(\bar x,\bar y,\bar\mu)^\top&0\\
			0&\nabla_y\mathcal L(\bar x,\bar y,\bar\mu)^\top
			&\nabla g(\bar y)^\top\\
			0&\operatorname{Diag}(\bar\mu)\nabla g(\bar y)
			&\operatorname{Diag}(g(\bar y))
		\end{bmatrix}.
	\end{equation}
	Here, $\nabla_x\mathcal L^\top = \nabla_{xy}^2f$ because $g$ is
	independent of $x$. Moreover, the right-hand side of \eqref{Mk-system}
	is bounded, since $r_k\to0$, $\delta_k+\sigma_k\to0$, and
	$\nabla F(x_k,y_k^*)\to\nabla F(\bar x,\bar y)$.
	
	To pass the complementarity conditions to the limit, for each
	$i\in\{1,\ldots,p\}$ define
	$
	B_{k,i}:=\int_0^1\nabla g_i(y_k(t))^\top\,dt.
	$
	Then $B_{k,i}\to\nabla g_i(\bar y)^\top$, and the same integral
	formula gives
	\begin{equation}\label{eq:constraint-difference}
		\eta_{k,i}:=B_{k,i}\gamma_k
		=\rho_k\bigl(g_i(z_k^*)-g_i(y_k^*)\bigr).
	\end{equation}
	It follows from
	\eqref{optconditiony}--\eqref{optconditionz} that
	$\lambda_{k,i}g_i(y_k^*)=0$ and $\mu_{k,i}g_i(z_k^*)=0$
	for $i=1,\ldots,p$. Together with $\lambda_{k,i},\mu_{k,i}\geq0$ and
	$g_i(y_k^*),g_i(z_k^*)\leq0$, these identities imply
	\begin{equation}\label{eq:C-product-identity}
		\omega_{k,i}\eta_{k,i}
		=\rho_k^2(\mu_{k,i}-\lambda_{k,i})
		\bigl(g_i(z_k^*)-g_i(y_k^*)\bigr)
		=\rho_k^2\bigl[-\mu_{k,i}g_i(y_k^*)
		-\lambda_{k,i}g_i(z_k^*)\bigr]\geq0.
	\end{equation}
	
	We first bound $\gamma_k$ using the error bound condition in
	Assumption~\ref{ass:lower-error-bound}.
	From \eqref{optconditiony} and the convexity of $g$, we have
	\[
	e_k^y:=
	\frac{\nabla_yF(x_k,y_k^*)-\sigma_kz_k^*-\delta_ky_k^*}{\rho_k}
	\in \nabla_yf(x_k,y_k^*)+N_Y(y_k^*).
	\]
	The numerator is bounded. Since $(x_k,y_k^*)\to(\bar x,\bar y)$,
	the error bound \eqref{eq:lower-solution-error-bound} applies for all
	sufficiently large $k$, with some constant $\kappa>0$ independent of $k$,
	and gives
	\begin{equation}\label{eq:lower-distance-rate}
		\operatorname{dist}(y_k^*,\mathcal S(x_k))
		\leq \kappa\|e_k^y\|=O(\rho_k^{-1}).
	\end{equation}
	The $z$-subproblem is equivalent to minimizing
	$f(x_k,z)+\frac{\sigma_k}{2\rho_k}\|z-y_k^*\|^2$ over $Y$.
	Let $\widehat y_k:=\operatorname{Proj}_{\mathcal S(x_k)}y_k^*$.
	Optimality of $z_k^*$ in this subproblem and
	$f(x_k,\widehat y_k)\leq f(x_k,z_k^*)$ imply
	\[
	\frac{\sigma_k}{2\rho_k}
	\bigl(\|z_k^*-y_k^*\|^2-\|\widehat y_k-y_k^*\|^2\bigr)
	\leq f(x_k,\widehat y_k)-f(x_k,z_k^*)\leq0.
	\]
	Since $\sigma_k/(2\rho_k)>0$, using
	$\|\widehat y_k-y_k^*\|=\operatorname{dist}(y_k^*,\mathcal S(x_k))$,
	the definition of $\gamma_k$, and \eqref{eq:lower-distance-rate} gives
	\begin{equation}\label{eq:scaled-primal-gap-bound}
		\|z_k^*-y_k^*\|\leq\operatorname{dist}(y_k^*,\mathcal S(x_k)),
		\qquad
		\|\gamma_k\|\leq\rho_k\operatorname{dist}(y_k^*,\mathcal S(x_k))=O(1).
	\end{equation}
	
	Next, let $\mathcal A^g:=\{i:g_i(\bar y)=0\}$.
	Continuity and complementarity imply
	$\lambda_{k,i}=\mu_{k,i}=\omega_{k,i}=0$ for all
	$i\notin\mathcal A^g$ and sufficiently large $k$.
	If $\mathcal A^g$ is empty, then $\omega_k=0$ eventually.
	Otherwise, let $J_{k,\mathcal A^g}^{12}$ denote the submatrix of
	$J_k^{12}$ formed by the columns indexed by $\mathcal A^g$.
	By \eqref{eq:Mk-limit} and lower-level LICQ,
	\[
	J_{k,\mathcal A^g}^{12}\longrightarrow
	\nabla g_{\mathcal A^g}(\bar y)^\top,
	\]
	whose columns are linearly independent. 
	The second block row of \eqref{Mk-system} gives
	\[
	J_{k,\mathcal A^g}^{12}\omega_k^{\mathcal A^g}
	=-\nabla_yF(x_k,y_k^*)+(\delta_k+\sigma_k)y_k^*
	-J_k^{11}\gamma_k.
	\]
	Its right-hand side is bounded, so $\{\omega_k\}$ is bounded.
	
	Finally, the first block row of \eqref{Mk-system} shows that
	$\nabla G(x_k)^\top\pi_k$ is bounded.
	If $\{\pi_k\}$ were unbounded, a subsequence of
	$\pi_k/\|\pi_k\|$ would converge to a vector $\widehat\pi$ satisfying
	\[
	\|\widehat\pi\|=1,\qquad
	\nabla G(\bar x)^\top\widehat\pi=0,\qquad
	\widehat\pi\geq0,\qquad
	\widehat\pi^\top G(\bar x)=0.
	\]
	This contradicts upper-level MFCQ.
	Thus $\{(\pi_k,\gamma_k,\omega_k)\}$ is bounded.

	Let $(\pi,\gamma,\omega)$ be any accumulation point of this sequence
	and pass to a subsequence converging to it. Taking limits in
	\eqref{Mk-system}, using \eqref{eq:Mk-limit} and
	$r_k\to0$, $\delta_k+\sigma_k\to0$, gives the two stationarity
	equations in \eqref{scaled-limit-C-system}. The upper-level conditions
	$\pi\geq0$ and $\pi^\top G(\bar x)=0$ follow from \eqref{upper-kkt}.
	For $i\in\mathcal I_1$, the constraint is strictly inactive at both
	saddle components for sufficiently large $k$, so $\omega_{k,i}=0$ and hence $\omega_i=0$.
	For $i\in\mathcal I_3$, the common positive multiplier limit implies
	$\lambda_{k,i},\mu_{k,i}>0$ eventually. Complementarity then gives
	$g_i(y_k^*)=g_i(z_k^*)=0$, so $B_{k,i}\gamma_k=0$ and
	$\nabla g_i(\bar y)^\top\gamma=0$ in the limit.
	Finally, since $\eta_{k,i}=B_{k,i}\gamma_k\to
	\nabla g_i(\bar y)^\top\gamma$, taking limits in
	\eqref{eq:C-product-identity} yields the C-type product inequality
	for $i\in\mathcal I_2$. This proves \eqref{scaled-limit-C-system}.
\end{proof}

Combining the preceding lemmas gives the following
convergence result for sequences with vanishing first-order stationarity
residuals.

\begin{theorem}[C-stationarity of accumulation points]
	\label{thm:Cstationarity}
	Let $\{x_k\}\subset X$ satisfy
	\[
	x_k\to\bar x,
	\qquad
	\varepsilon_k
	:=\operatorname{dist}\bigl(0,\nabla\phi_k(x_k)+N_X(x_k)\bigr)
	\to0.
	\]
	Let $\bar y$ be an accumulation point of
	$\{y_k^*(x_k)\}$. Suppose that the parameters satisfy
	\eqref{paramcriterion}, that $\mathcal S$ is inner semicontinuous at
	$\bar x$, and that $f$ is twice continuously differentiable in a
	neighborhood of $(\bar x,\bar y)$. Assume further that upper-level
	MFCQ holds at $\bar x$ and that lower-level LICQ and the uniform parametric error bound
	condition in Assumption~\ref{ass:lower-error-bound} hold at
	$(\bar x,\bar y)$.
	Then $\bar y\in\mathcal S_p(\bar x)$ and $\bar x$ is a C-stationary
	point of the original PBO in the sense of
	Definition~\ref{def:C-stationarity}.
\end{theorem}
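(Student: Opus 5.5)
The plan is to assemble Lemma~\ref{multiplierconverge} and Lemma~\ref{regularity}. The only new ingredient is converting the residual condition $\varepsilon_k\to0$ into the upper-level KKT form \eqref{upper-kkt}.

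First, pass to a subsequence along which $y_k^*(x_k)\to\bar y$; relabel it as $k$. By Lemma~\ref{multiplierconverge}, which applies since $\mathcal S$ is inner semicontinuous at $\bar x$, $f$ is $C^2$ near $(\bar x,\bar y)$, and lower-level LICQ holds, we obtain $\bar y\in\mathcal S_p(\bar x)$ and $z_k^*(x_k)\to\bar y$. The same lemma gives a unique $\bar\mu\in\Lambda(\bar x,\bar y)$ and, for large $k$, multipliers $\lambda_k,\mu_k$ satisfying \eqref{optconditiony}--\eqref{optconditionz} with $\lambda_k,\mu_k\to\bar\mu$. This already proves the first claim. (If one prefers, Theorem~\ref{lim_yz} also yields $\bar y\in\mathcal S_p(\bar x)$.)

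Next, produce $\pi_k$ and $r_k$. Since $\varepsilon_k=\operatorname{dist}(0,\nabla\phi_k(x_k)+N_X(x_k))$ and $N_X(x_k)$ is closed, there is $v_k\in N_X(x_k)$ with $\|\nabla\phi_k(x_k)+v_k\|=\varepsilon_k$. We need $v_k=\nabla G(x_k)^\top\pi_k$ for some $\pi_k\ge0$ with $\pi_k^\top G(x_k)=0$. This is where the hypotheses must be used carefully. $X$ is convex with convex $C^1$ components, and MFCQ at $\bar x$ persists at $x_k$ for large $k$ by a standard contradiction and compactness argument. Under MFCQ at $x_k$, the normal cone of the convex set $X$ equals $\{\nabla G(x_k)^\top\pi:\pi\ge0,\ \pi^\top G(x_k)=0\}$. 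I would verify this via the linearized cone: MFCQ gives a Slater-type direction locally, so Abadie's condition holds and the polar of the linearized tangent cone is the stated cone.

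Set $r_k:=\nabla\phi_k(x_k)+v_k$. By Theorem~\ref{differentiable}, \eqref{gradient_phi} shows that \eqref{upper-kkt} holds with $\|r_k\|=\varepsilon_k\to0$. Lemma~\ref{regularity} then applies, using upper-level MFCQ, lower-level LICQ, and Assumption~\ref{ass:lower-error-bound} at $(\bar x,\bar y)$. It gives boundedness of $(\pi_k,\gamma_k,\omega_k)$, so an accumulation point $(\pi,\gamma,\omega)$ exists. That point satisfies $\pi\ge0$, $\pi^\top G(\bar x)=0$, and \eqref{scaled-limit-C-system}.

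Finally, \eqref{scaled-limit-C-system} together with these sign conditions is exactly \eqref{C-stationarity-system}, with $\bar y\in\mathcal S_p(\bar x)$ and $\bar\mu\in\Lambda(\bar x,\bar y)$. Feasibility of $\bar x$ follows from closedness of $X$. Hence $\bar x$ is C-stationary.

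The main obstacle is the normal-cone representation step: MFCQ must be transferred from $\bar x$ to nearby $x_k$, and the KKT description of $N_X(x_k)$ must then be justified. Everything else is direct invocation of the earlier lemmas.
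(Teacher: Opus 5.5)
Your proposal is correct and follows essentially the same route as the paper. You pass to a subsequence, invoke Lemma~\ref{multiplierconverge} to get $\bar y\in\mathcal S_p(\bar x)$ and multiplier convergence (the paper cites Theorem~\ref{lim_yz} first, which changes nothing), represent $v_k\in N_X(x_k)$ by KKT multipliers using stability of MFCQ, and conclude with Lemma~\ref{regularity}. The only differences are cosmetic. You take $v_k$ attaining the distance exactly, whereas the paper chooses $v_k$ with $\|r_k\|\le\varepsilon_k+1/k$, and you spell out the normal-cone representation that the paper states without proof; both versions are valid.
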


\begin{proof}
	Pass to a subsequence along which
	$y_k^*(x_k)\to\bar y$. Inner semicontinuity of $\mathcal S$ and
	Lemma~\ref{lem:isc-implies-lsc} imply that $\phi$ is lower
	semicontinuous at $\bar x$. Theorem~\ref{lim_yz} therefore gives
	$\bar y\in\mathcal S_p(\bar x)\subseteq\mathcal S(\bar x)$.
	By assumption, lower-level LICQ holds at this response.
	Lemma~\ref{multiplierconverge} then yields $z_k^*(x_k)\to\bar y$
	and multipliers $\lambda_k,\mu_k\to\bar\mu$ satisfying
	\eqref{optconditiony}--\eqref{optconditionz}.
	
	For each $k$, choose $v_k\in N_X(x_k)$ such that the residual
	\[
	r_k:=\nabla\phi_k(x_k)+v_k
	\quad\text{satisfies}\quad
	\|r_k\|\leq\varepsilon_k+\frac1k.
	\]
	Then $r_k\to0$.
	Since MFCQ is stable, upper-level MFCQ holds
	at $x_k$ for all sufficiently large $k$. Hence there exist multipliers
	$\pi_k$ such that
	\[
	v_k=\nabla G(x_k)^\top\pi_k,
	\qquad
	\pi_k\geq0,
	\qquad
	\pi_k^\top G(x_k)=0.
	\]
	Combining this representation with the gradient formula
	\eqref{gradient_phi} shows that \eqref{upper-kkt} holds. All hypotheses of Lemma~\ref{regularity} are now satisfied,
	so it provides an accumulation point
	$(\pi,\gamma,\omega)$ satisfying $\pi\geq0$,
	$\pi^\top G(\bar x)=0$, and \eqref{scaled-limit-C-system}.
	Together with
	$\bar y\in\mathcal S_p(\bar x)$ and
	$\bar\mu\in\Lambda(\bar x,\bar y)$, these are precisely the relations
	in Definition~\ref{def:C-stationarity}.
\end{proof}

\section{A Single-Loop Algorithm for PBO}
\label{sec:deterministic-algorithm}
The smooth approximation developed in Section~\ref{sec:smoothapproximation} provides a way to solve PBO through a sequence of smooth optimization problems $\min_{x\in X}\phi_k(x)$. In this section, we first consider the deterministic setting and develop a single-loop algorithm that approximates $\nabla\phi_k$ by tracking the associated saddle point. The algorithm and much of its convergence analysis follow the framework of our conference version \cite{qichao2025single}. The additional regularization in $y$, however, allows us to relax the strong concavity assumption on $F(x,\cdot)$ to concavity.

\subsection{Algorithm Description}\label{subsec:deterministic-sipba}
For the smooth approximation problem
\(\min_{x\in X}\phi_k(x)\), the ideal projected-gradient method would update $x^k$ according to
\begin{equation*}
	x^{k+1}
	=
	\operatorname{Proj}_X
	\left(
	x^k-\alpha_k\nabla\phi_k(x^k)
	\right).
\end{equation*}
By Theorem~\ref{differentiable}, the gradient of \(\phi_k\) is
$
\nabla\phi_k(x^k)
=
\nabla_x\psi_k
\bigl(
x^k,y_k^*(x^k),z_k^*(x^k)
\bigr).$
Evaluating this gradient exactly requires solving the inner minimax problem at every iteration, which can be computationally expensive. To approximate this gradient with a single saddle-point update per iteration, we propose the Single-loop Pessimistic Bilevel Algorithm (SiPBA), described below.

\textbf{Saddle-point tracking step.}
Given the current iterate \((x^k,y^k,z^k)\) and parameters
\(\rho_k,\sigma_k,\delta_k>0\), we track the saddle point of
$\underset{z\in Y}{\min}\underset{y\in Y}{\max}\;\psi_k(x^k,y,z)$
by taking one projected gradient-ascent step in \(y\) and one projected gradient-descent step in \(z\), with directions
\begin{equation}\label{det_direction_yz}
	\begin{aligned}
		d_y^k
		&:=
		\nabla_y\psi_k(x^k,y^k,z^k)=
		\nabla_yF(x^k,y^k)
		-\rho_k\nabla_yf(x^k,y^k)
		-\sigma_k z^k
		-\delta_k y^k,\\
		d_z^k
		&:=
		\nabla_z\psi_k(x^k,y^k,z^k)=
		\rho_k\nabla_yf(x^k,z^k)
		+\sigma_k(z^k-y^k).
	\end{aligned}
\end{equation}
The saddle variables are then updated according to
\begin{equation}\label{det_update_yz}
	\begin{aligned}
		y^{k+1}
		&=
		\operatorname{Proj}_Y
		\left(y^k+\beta_k d_y^k\right),\quad
		z^{k+1}
		=
		\operatorname{Proj}_Y
		\left(z^k-\beta_k d_z^k\right),
	\end{aligned}
\end{equation}
where \(\beta_k>0\) is the stepsize and
\(\operatorname{Proj}_Y\) denotes the Euclidean projection onto \(Y\).

\textbf{Inexact projected-gradient step.}
We use the updated pair \((y^{k+1},z^{k+1})\) as an approximation to the exact saddle point
\(\bigl(y_k^*(x^k),z_k^*(x^k)\bigr)\).
Substituting this pair into~\eqref{gradient_phi} gives the inexact upper-level gradient
\begin{equation}\label{det_direction_x}
	\begin{aligned}
		d_x^k
		&:=
		\nabla_x\psi_k
		\bigl(x^k,y^{k+1},z^{k+1}\bigr)\\
		&=
		\nabla_xF(x^k,y^{k+1})
		-\rho_k
		\left(
		\nabla_xf(x^k,y^{k+1})
		-\nabla_xf(x^k,z^{k+1})
		\right).
	\end{aligned}
\end{equation}
We then update the upper-level variable by
\begin{equation}\label{det_update_x}
	x^{k+1}
	=
	\operatorname{Proj}_X
	\left(x^k-\alpha_k d_x^k\right),
\end{equation}
where \(\alpha_k>0\) is the upper-level stepsize.

To ensure consistency of the smooth approximation with the original PBO, we choose the parameter sequences $\{\rho_k\}$, $\{\sigma_k\}$, and $\{\delta_k\}$ to satisfy~\eqref{paramcriterion}. The parameter and stepsize schedules used in the convergence analysis are specified in Theorem~\ref{thm:det-concave-rates}. Combining saddle-point tracking with the inexact upper-level update yields SiPBA, summarized in Algorithm~\ref{algorithm:det-sipba}.
\begin{algorithm}[htbp]
	\caption{
		\textbf{Si}ngle-loop \textbf{P}essimistic
		\textbf{B}ilevel \textbf{A}lgorithm (\textbf{SiPBA})}
	\label{algorithm:det-sipba}
	\begin{algorithmic}[1]
		\Require Initial points
		$(x^0,y^0,z^0)\in X\times Y\times Y$
		\For{$k=0,1,\dots,K-1$}
		\State Update the stepsizes $\alpha_k,\beta_k>0$, the penalty
		parameter $\rho_k>0$, and the regularization parameters
		$\sigma_k,\delta_k>0$ according to \eqref{deter:par}.
		\State Compute the one-step ascent-descent directions $d_y^k$ and $d_z^k$ as
		in~\eqref{det_direction_yz}.
		\State Update $y^{k+1}$ and $z^{k+1}$ using \eqref{det_update_yz}.
		\State Compute the inexact gradient direction $d_x^k$ as in~\eqref{det_direction_x}.
		\State Update $x^{k+1}$ using~\eqref{det_update_x}.
		\EndFor
	\end{algorithmic}
\end{algorithm}

\subsection{Convergence Results}\label{subsec:deterministic-convergence}
We establish convergence of SiPBA in two steps. First, we give nonasymptotic bounds on the stationarity residual for the current smooth approximation and the saddle-point tracking error. We then show that, under suitable local regularity conditions, the iterates admit a subsequence converging to a C-stationary point of the original PBO.
The following compactness assumption is used throughout
Sections~\ref{sec:deterministic-algorithm}
and~\ref{sec:stochastic-algorithm}.

\begin{assumption}\label{ass:Xcompact}
	The set \(X\) is compact.
\end{assumption}

We evaluate the convergence of SiPBA using two residuals. The first is the
projected-gradient mapping
\[
\mathcal G_k(x)
:=
\frac{1}{\alpha_k}
\left(
x-\operatorname{Proj}_X
\bigl(x-\alpha_k\nabla\phi_k(x)\bigr)
\right),
\]
which measures stationarity of the current smooth approximation $\phi_k(x)$. In
particular,
\(\mathcal G_k(x)=0\) if and only if
\(0\in\nabla\phi_k(x)+N_X(x)\).
The second is the saddle-point tracking error
\(
\|(y^k,z^k)-(y_k^*(x^k),z_k^*(x^k))\|,
\)
which measures how accurately $(y^k,z^k)$ tracks the exact saddle point of the current smooth approximation. Through the contraction of the saddle-point update, this error also controls the discrepancy between the gradient estimate $d_x^k$, evaluated at $(y^{k+1},z^{k+1})$, and the exact gradient $\nabla\phi_k(x^k)$.

Our nonasymptotic analysis follows the saddle-point contraction
and descent arguments in \citep[Appendix~C]{qichao2025single}. The saddle-point contraction estimate now uses $\bar\sigma_k:=\min\{\sigma_k,\delta_k\}$, and the additional terms arising from changes in $\delta_k$ have the same order as the corresponding $\sigma_k$-terms under the schedules below. We state the resulting rates and common-subsequence conclusion below and omit the repeated details.
\begin{theorem}
	\label{thm:det-concave-rates}
	Let $\{(x^k,y^k,z^k)\}$ be generated by SiPBA with
	\begin{equation}\label{deter:par}
		\begin{aligned}
			&\alpha_k=\alpha_0(k+1)^{-s},\;
			\beta_k=\beta_0(k+1)^{-3t},\;\\
			&	\sigma_k=\sigma_0(k+1)^{-t},\; \delta_k=\delta_0(k+1)^{-t},\;
			\rho_k=\rho_0(k+1)^t ,
		\end{aligned}
	\end{equation}
	with $\alpha_0,\beta_0,\sigma_0,\delta_0,\rho_0>0$, and $s,t>0$.
	Suppose that $\phi$ is bounded below on $X$ and that
	$
	s>7t,
	\;
	s+2t<1.
	$
	If $\beta_0/\sigma_0$ is sufficiently small, then
	\begin{equation*}
		\label{eq:det-concave-rate-x}
		\begin{aligned}
					&\min_{0\le k\le K}\|\mathcal G_k(x^k)\|^2
			=O(K^{-(1-s-2t)}),\\
			&\min_{0\le k\le K}
			\|(y^k,z^k)-(y_k^*(x^k),z_k^*(x^k))\|^2
			=O(K^{-(1-7t)}).
		\end{aligned}
	\end{equation*}
	Moreover,
	\[
	\liminf_{k\to\infty} \; \max\bigl\{
	\|\mathcal G_k(x^k)\|^2,
	\|(y^k,z^k)-(y_k^*(x^k),z_k^*(x^k))\|^2
	\bigr\}=0.
	\]
\end{theorem}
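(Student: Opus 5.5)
The plan is a Lyapunov-type argument that couples descent of the smooth approximation with contraction of the saddle-point tracking error. Throughout, write $w^k:=(y^k,z^k)$, $w_k^*(x):=(y_k^*(x),z_k^*(x))$ and $e_k:=\|w^k-w_k^*(x^k)\|^2$.

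\textbf{Step 1: Lipschitz constants of the smoothed problem.} For fixed $x$, the map $(y,z)\mapsto\nabla_{(y,-z)}\psi_k(x,\cdot,\cdot)$ is $\bar\sigma_k$-strongly monotone with $\bar\sigma_k=\min\{\sigma_k,\delta_k\}\asymp (k+1)^{-t}$. It is Lipschitz with constant $L_k=O(\rho_k)$. Its $x$-Lipschitz constant is also $O(\rho_k)$.

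From these constants I will derive the following bounds via the implicit-function / strong-monotonicity argument:
\begin{itemize}
\item $\|w_k^*(x)-w_k^*(x')\|\le C(\rho_k/\bar\sigma_k)\|x-x'\|=O((k+1)^{2t})\|x-x'\|$;
\item $\nabla\phi_k$ is Lipschitz with constant $\ell_k=O(\rho_k^2/\bar\sigma_k)=O((k+1)^{3t})$;
\item parameter drift: $\|w_k^*(x)-w_{k+1}^*(x)\|\le C\,\frac{|\rho_{k+1}-\rho_k|+|\sigma_{k+1}-\sigma_k|+|\delta_{k+1}-\delta_k|}{\bar\sigma_{k+1}}\,(1+\|w^*\|)$.
\end{itemize}
The drift bound is $O((k+1)^{-1+2t})$, after using compactness of $X$ (Assumption~\ref{ass:Xcompact}) and uniform boundedness of the saddle points.

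\textbf{Step 2: one-step contraction of the tracking error.} A projected gradient ascent–descent step with $\beta_k\le c\,\bar\sigma_k/L_k^2$ gives
\[
\|w^{k+1}-w_k^*(x^k)\|^2\le(1-\beta_k\bar\sigma_k)e_k .
\]
The hypothesis that $\beta_0/\sigma_0$ is small, together with $\beta_k\sim(k+1)^{-3t}$ and $\bar\sigma_k/L_k^2\sim(k+1)^{-3t}$, ensures this stepsize condition holds.

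I then pass from $w_k^*(x^k)$ to $w_{k+1}^*(x^{k+1})$ using Young's inequality and the two bounds of Step 1:
\[
e_{k+1}\le(1-\tfrac12\beta_k\bar\sigma_k)e_k+\frac{C}{\beta_k\bar\sigma_k}\Bigl(\tfrac{\rho_k^2}{\bar\sigma_k^2}\|x^{k+1}-x^k\|^2+(k+1)^{-2+4t}\Bigr).
\]
Here $\beta_k\bar\sigma_k\sim(k+1)^{-4t}$.

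\textbf{Step 3: descent for $\phi_k$ and the potential.} Using the $\ell_k$-smoothness of $\phi_k$, the inexact projected-gradient step, and
\[
\|d_x^k-\nabla\phi_k(x^k)\|\le C\rho_k\|w^{k+1}-w_k^*(x^k)\|,
\]
the usual projected-gradient argument with $\alpha_k\ell_k\le 1/2$ gives
\[
\phi_k(x^{k+1})\le\phi_k(x^k)-\tfrac{\alpha_k}{4}\|\mathcal G_k(x^k)\|^2-\tfrac{1}{4\alpha_k}\|x^{k+1}-x^k\|^2+C\alpha_k\rho_k^2e_k .
\]
The condition $\alpha_k\ell_k\le1/2$ holds eventually because $s>3t$.

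The parameter change $\phi_{k+1}-\phi_k$ is bounded, via the envelope formula in $(\rho,\sigma,\delta)$ and bounded saddle points, by $O(|\rho_{k+1}-\rho_k|+|\sigma_{k+1}-\sigma_k|+|\delta_{k+1}-\delta_k|)$. Sums of these increments are controlled because $\phi_k$ stays bounded below, by the lower bound on $\phi$ and Lemma~\ref{lem:limsup}-type estimates; where needed I will telescope with the monotone parameter trend instead.

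I form the potential
\[
V_k=\phi_k(x^k)+c_k e_k,\qquad c_k\asymp \alpha_k\rho_k^2/(\beta_k\bar\sigma_k).
\]
This choice makes the $e_k$ coefficient negative, of order $\alpha_k\rho_k^2$. The resulting requirement that the $\|x^{k+1}-x^k\|^2$ cross term be absorbed by $\frac1{4\alpha_k}$ reads
\[
c_k\,\frac{\rho_k^2}{\bar\sigma_k^2\,\beta_k\bar\sigma_k}\lesssim\frac1{\alpha_k},
\]
that is, $\alpha_k^2(k+1)^{14t}\lesssim1$, which is where $s>7t$ enters.

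\textbf{Step 4: summation and rates.} Summing from $0$ to $K$ gives
\[
\sum_k \alpha_k\|\mathcal G_k(x^k)\|^2+\sum_k\alpha_k\rho_k^2 e_k\le C+\sum_k(\text{drift terms}),
\]
where the drift terms are summable or grow slowly enough given $s+2t<1$. Dividing by $\sum_k\alpha_k\rho_k^2\sim K^{1-s+2t}$, and by the corresponding weights for each term, yields $O(K^{-(1-s-2t)})$ for the residual; the $K^{2t}$ loss comes from the drift contribution $c_k\cdot$drift.

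For the tracking error I will instead use the contraction recursion of Step 2 directly: summing it with weights gives $\sum_k\beta_k\bar\sigma_k e_k=O(\dots)$, hence $\min_k e_k=O(K^{-(1-7t)})$ after tallying the exponents. The liminf statement follows because both weighted series are finite relative to the divergent weights, with $\alpha_k$ non-summable since $s<1$. Hence the set of indices where either quantity exceeds $\epsilon$ cannot contain all large $k$ jointly: a common subsequence along which both vanish is obtained by taking the sum of the two normalized quantities, which is itself summable against $\alpha_k$.

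\textbf{Main obstacle.} I expect the hardest part to be the exponent bookkeeping in Step 3: balancing $c_k$ so that the cross term is absorbed while the drift terms $c_k(k+1)^{-2+4t}/(\beta_k\bar\sigma_k)$ remain controlled. This requires $c_k$ to be nonincreasing or only mildly increasing; if it is not, I would first rescale the potential by a factor of $\rho_k^{-1}$.
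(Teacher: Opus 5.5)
Your proof is correct in outline and uses the same mechanism for $s>7t$ as the paper: your absorption condition $\alpha_k^2(k+1)^{14t}\lesssim 1$. It weights the merit function differently, however.

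The paper defers the details to the conference version. Its stated rates correspond to the deterministic analogue of the merit function \eqref{V_def2} of Proposition~\ref{meritfunctionproposition}, namely $a_k(\phi_k(x^k)-\underline\phi)+b_k\|u^k-u_k^*(x^k)\|^2$ with \emph{decaying} weights $a_k=(k+1)^{-2t}$ and $b_k=(k+1)^{-3t}$.
\begin{itemize}
\item The factor $a_k$ turns the non-summable drift $2(\rho_{k+1}-\rho_k)M_f$ of Lemma~\ref{lem_b6} into $O(k^{-1-t})$. Passing from $a_{k+1}$ to $a_k$ uses $\phi_k\ge\underline\phi$ (Lemma~\ref{lem:det-concave-lower-bound}).
\item A single summation then gives $\sum_k\bigl(a_k\alpha_k\|\mathcal G_k(x^k)\|^2+b_k\beta_k\bar\sigma_k e_k\bigr)<\infty$.
\item The rates are $1/\sum_{k\le K}a_k\alpha_k$ and $1/\sum_{k\le K}b_k\beta_k\bar\sigma_k$. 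This is where $s+2t<1$ and the exponent $1-7t$ come from.
\item The liminf follows from the same series, since $b_k\beta_k\bar\sigma_k\gtrsim a_k\alpha_k$.
\end{itemize}
You instead put weight one on $\phi_k$ and use the minimal tracking weight $c_k\asymp\alpha_k\rho_k^2/(\beta_k\bar\sigma_k)\sim(k+1)^{6t-s}$. The cost is that the $\rho$-drift accumulates to $O(\rho_K)=O(K^t)$, and the tracking rate needs a second pass through the contraction recursion. The gain is a sharper residual bound $O(K^{-(1-s-t)})$, for which $s+t<1$ would suffice.

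Several of your justifications need repair, though none is fatal.
\begin{itemize}
\item[(i)] Boundedness below of $\phi_k$ comes from Lemma~\ref{lem_a5}, not Lemma~\ref{lem:limsup}. It only controls $-V_{K+1}$ and says nothing about $\sum_k(\rho_{k+1}-\rho_k)=\rho_{K+1}-\rho_0\sim K^t$. You can simply accept this growth, since $K^t=o\bigl(\sum_{k\le K}\alpha_k\bigr)$. Alternatively, note that $\phi_{\rho,\sigma,\delta}$ is nonincreasing in $\rho$: its $\rho$-derivative is $-(f(x,y^*)-f(x,z^*))\le 0$. This holds because comparing with $z=y^*$ in the $z$-subproblem gives $\rho\bigl(f(x,z^*)-f(x,y^*)\bigr)\le-\frac{\sigma}{2}\|z^*-y^*\|^2$. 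With this, the $\phi$-drift becomes summable.
\item[(ii)] In Step 4 the residual must be normalized by $\sum_k\alpha_k$, not $\sum_k\alpha_k\rho_k^2$. Also, the term $c_k\cdot$drift is not the source of any $K^{2t}$ loss: it is $O(k^{14t-s-2})$ and summable.
\item[(iii)] Dividing the potential bound by $\sum_k\alpha_k\rho_k^2$ gives only $\min_k e_k=O(K^{-(1-s+t)})$, which is too weak when $s>8t$. So your second pass is genuinely needed, and it requires keeping the term $\alpha_k^{-1}\|x^{k+1}-x^k\|^2$ in the summed potential. Since $k^{8t}\alpha_k=O(k^t)$, the forcing term in Step 2 satisfies $\sum_k\frac{\rho_k^2}{\beta_k\bar\sigma_k^3}\|x^{k+1}-x^k\|^2\le O(K^t)\sum_k\alpha_k^{-1}\|x^{k+1}-x^k\|^2=O(K^{2t})$. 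Summing Step 2 then yields $\min_k e_k=O(K^{-(1-6t)})$, which implies the claimed $O(K^{-(1-7t)})$.
\item[(iv)] For the liminf, give both residuals the common weight $\alpha_k$ via $\alpha_k\rho_k^2\ge\rho_0^2\alpha_k$, and argue that the weighted averages vanish. The series is literally summable only after the monotonicity fix in (i).
\end{itemize}
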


Along the common subsequence identified above, the stationarity residual for the smooth approximation vanishes, and the computed saddle variables approach the corresponding exact saddle points. The stationarity consistency result in Theorem~\ref{thm:Cstationarity} then yields the following conclusion for the original PBO.
\begin{theorem}
	\label{thm:sipba-subsequential-C}
	Let $\{(x^k,y^k,z^k)\}$ be generated by SiPBA under the
	assumptions of Theorem~\ref{thm:det-concave-rates}.
	Then there exists a subsequence $\{k_j\}$ such that
	\[
	\mathcal G_{k_j}(x^{k_j})\to0,\qquad
	(y^{k_j},z^{k_j})-
	\bigl(y_{k_j}^*(x^{k_j}),z_{k_j}^*(x^{k_j})\bigr)\to0.
	\]
	Let $(\bar x,\bar y)$ be any accumulation pair of
	$\bigl\{
	(x^{k_j},y_{k_j}^*(x^{k_j}))
	\bigr\}.$
	Suppose, in addition, that $X$ and $Y$ admit the explicit
	inequality representations specified in
	Subsection~\ref{subsec:stNationarity-preliminaries}.
	Assume further that the local assumptions of
		Theorem~\ref{thm:Cstationarity} are satisfied:
		$\mathcal S$ is inner semicontinuous at $\bar x$;
		$f$ is twice continuously differentiable in a neighborhood
		of $(\bar x,\bar y)$;
		upper-level MFCQ holds at $\bar x$; and lower-level LICQ
		and Assumption~\ref{ass:lower-error-bound} hold at
		$(\bar x,\bar y)$.
	Then, along the corresponding subsequence,
	\[
	x^{k_j}\to\bar x,\qquad
	(y^{k_j},z^{k_j})\to(\bar y,\bar y),\qquad
	\bar y\in\mathcal S_p(\bar x),
	\]
	and $\bar x$ is a C-stationary point of the original PBO.
\end{theorem}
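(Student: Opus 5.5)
The plan is to reduce everything to Theorem~\ref{thm:Cstationarity}, with the liminf statement of Theorem~\ref{thm:det-concave-rates} supplying the subsequence. That liminf being zero gives a subsequence $\{k_j\}$ along which both $\|\mathcal G_{k_j}(x^{k_j})\|\to0$ and $\|(y^{k_j},z^{k_j})-(y_{k_j}^*(x^{k_j}),z_{k_j}^*(x^{k_j}))\|\to0$. This is the first assertion.

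Next I would check that an accumulation pair $(\bar x,\bar y)$ exists. Compactness of $X$ (Assumption~\ref{ass:Xcompact}) handles $x^{k_j}$. For $y_{k_j}^*(x^{k_j})$, boundedness should follow from the saddle-point structure. Comparing the saddle value with a point $\hat y\in\mathcal S(x)\cap D_y$ (Assumption~\ref{assum2}) and using coercivity of $-F$ would give a uniform bound. Alternatively, I could simply take the statement as presupposing such an accumulation pair, since it says ``any accumulation pair''. After passing to a further subsequence, we have $x^{k_j}\to\bar x$ and $y^*_{k_j}(x^{k_j})\to\bar y$.

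The key translation is from the projected-gradient residual to the residual $\varepsilon_k$ in Theorem~\ref{thm:Cstationarity}. Set $p_k:=\operatorname{Proj}_X(x^k-\alpha_k\nabla\phi_k(x^k))$. By the projection optimality condition,
\[
\mathcal G_k(x^k)-\nabla\phi_k(x^k)\in N_X(p_k),\qquad \|x^k-p_k\|=\alpha_k\|\mathcal G_k(x^k)\|\to0.
\]
The normal cone sits at $p_k$, not at $x^k$. I would therefore apply Theorem~\ref{thm:Cstationarity} to the sequence $\{p_{k_j}\}$, which also converges to $\bar x$, and then need $\operatorname{dist}(0,\nabla\phi_{k}(p_k)+N_X(p_k))\le\|\mathcal G_k(x^k)\|+\|\nabla\phi_k(p_k)-\nabla\phi_k(x^k)\|$.

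This is the main obstacle, because $\nabla\phi_k$ has a Lipschitz constant growing like $\rho_k/\bar\sigma_k\sim(k+1)^{2t}$ or so, and the Lipschitz constant of the saddle map scales as $\rho_k/\bar\sigma_k$. We need $L_k\alpha_k\|\mathcal G_k\|\to0$. Since $\alpha_k L_k\sim (k+1)^{-s+c t}$ and $s>7t$, this product should be bounded or vanish, so the extra term tends to zero. Failing that, I would argue directly with $\nabla\phi_k(p_k)$ replaced by the gradient formula at $x^k$ in the proof of Lemma~\ref{regularity}, allowing $x$-arguments $x_k$ and $p_k$ to differ by a vanishing amount.

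I also need $y^*_{k_j}(p_{k_j})\to\bar y$, which follows from the same Lipschitz estimate $\|y^*_k(p_k)-y^*_k(x^k)\|\le L_k\|p_k-x^k\|\to0$.

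With these in place, Theorem~\ref{thm:Cstationarity} applies at $(\bar x,\bar y)$ under the listed local assumptions. It gives $\bar y\in\mathcal S_p(\bar x)$ and that $\bar x$ is C-stationary, and via Lemma~\ref{multiplierconverge} it gives $z^*_{k_j}(x^{k_j})\to\bar y$. Finally, because the tracking error vanishes along $\{k_j\}$, $(y^{k_j},z^{k_j})\to(\bar y,\bar y)$.
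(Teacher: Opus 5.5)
Your proposal is correct and follows essentially the same route as the paper. You extract the common subsequence from the liminf, use Lemma~\ref{multiplierconverge} to get $z_{k_j}^*(x^{k_j})\to\bar y$, pass to the projected points $p_{k_j}$ (the paper's $\widehat x_j$), control the gradient and saddle-point shifts via Lemmas~\ref{Lipshitzofphi} and~\ref{uxk+1xk}, and then invoke Theorem~\ref{thm:Cstationarity} along $\{p_{k_j}\}$.

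The only slip is an exponent. The gradient Lipschitz constant is $L_{\phi_k}=O(k^{3t})$, while the $k^{2t}$ rate belongs to the saddle map. Hence $\alpha_kL_{\phi_k}=O(k^{3t-s})\to0$ and $\alpha_k(L_F+2\rho_kL_f)/\bar\sigma_k=O(k^{2t-s})\to0$, and both limits are guaranteed by $s>7t$.
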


\begin{proof}
	For brevity, write $u^k:=(y^k,z^k)$ and
	$u_k^*(x):=\bigl(y_k^*(x),z_k^*(x)\bigr)$ throughout this proof.
	Since Theorem~\ref{thm:det-concave-rates} gives
	$\liminf\limits_{k\to\infty}\max\{\|\mathcal G_k(x^k)\|^2,\|u^k-u_k^*(x^k)\|^2\}=0$,
	there exists a subsequence $\{k_j\}$ such that
	\[
	\mathcal G_{k_j}(x^{k_j})\to0,
	\qquad
	u^{k_j}-u_{k_j}^*(x^{k_j})\to0.
	\]
	Let $(\bar x,\bar y)$ be any accumulation pair of
	$\{(x^{k_j},y_{k_j}^*(x^{k_j}))\}$. Passing to a further subsequence, we have
	$x^{k_j}\to\bar x$ and $y_{k_j}^*(x^{k_j})\to\bar y$.
	The parameter subsequences
	$\{\rho_{k_j}\}$, $\{\sigma_{k_j}\}$, and
	$\{\delta_{k_j}\}$ still satisfy
	\eqref{paramcriterion}. The assumed inner semicontinuity of $\mathcal S$ and
		lower-level LICQ allow us to apply Lemma~\ref{multiplierconverge}, which gives
	$
	z_{k_j}^*(x^{k_j})\to\bar y,
	\;
	\bar y\in\mathcal S_p(\bar x).
	$
	Thus,
	$u_{k_j}^*(x^{k_j})\to(\bar y,\bar y).$
	Combining this limit with the vanishing tracking error gives
	$u^{k_j}\to(\bar y,\bar y).$
	
	It remains to establish C-stationarity of $\bar x$. Define
	\[	\widehat x_j
	:=
	\operatorname{Proj}_X(
	x^{k_j}
	-
	\alpha_{k_j}
	\nabla\phi_{k_j}(x^{k_j})
	).\]
	By the definition of $\mathcal G_{k_j}$,
	$
	\widehat x_j-x^{k_j}
	=
	-\alpha_{k_j}\mathcal G_{k_j}(x^{k_j}),$
	and hence $\widehat x_j\to\bar x$. The projection optimality
	condition gives
	\[
	\mathcal G_{k_j}(x^{k_j})
	-
	\nabla\phi_{k_j}(x^{k_j})
	\in N_X(\widehat x_j).
	\]
	Using Lemma~\ref{Lipshitzofphi} in Appendix, we obtain
	\begin{align}
		&
		\operatorname{dist}\left(
		0,
		\nabla\phi_{k_j}(\widehat x_j)
		+
		N_X(\widehat x_j)
		\right)
		\le
		\left(
		1+\alpha_{k_j}L_{\phi_{k_j}}
		\right)
		\|\mathcal G_{k_j}(x^{k_j})\|
		\to0,
		\label{eq:sto-as-normal-residual}
	\end{align}
	where $L_{\phi_k}=O(k^{3t})$ and hence
	$\alpha_kL_{\phi_k}=O(k^{3t-s})\to0$, since $s>7t$.
	
	Moreover, let $L_F$ and $L_f$ denote the Lipschitz constants of $\nabla F$ and $\nabla f$, respectively. Lemma~\ref{uxk+1xk} in Appendix gives
	\begin{align*}
		\left\|
		u_{k_j}^*(\widehat x_j)
		-
		u_{k_j}^*(x^{k_j})
		\right\|
		&\le
		\frac{L_F+2\rho_{k_j}L_f}
		{\bar\sigma_{k_j}}
		\alpha_{k_j}
		\|\mathcal G_{k_j}(x^{k_j})\|
		\to0,
	\end{align*}
	since
	$\frac{L_F+2\rho_kL_f}{\bar\sigma_k}\alpha_k
	=
	O(k^{2t-s}).$
	Consequently,
	$
	y_{k_j}^*(\widehat x_j)\to\bar y.$
	The error bound in Assumption~\ref{ass:lower-error-bound} is uniform
		for $x$ near $\bar x$ and therefore also applies at $\widehat x_j$ and
		$y_{k_j}^*(\widehat x_j)$ for all sufficiently large $j$.
		After reindexing the subsequence by $j$, all assumptions of
		Theorem~\ref{thm:Cstationarity} are satisfied by
	$\{\widehat x_j\}$ and the approximation sequence
	$\{\phi_{k_j}\}$. Therefore, $\bar x$ is a C-stationary point
	of the original PBO.
\end{proof}

\section{A Single-Loop Algorithm for Stochastic PBO}
\label{sec:stochastic-algorithm}

In this section, we extend SiPBA to stochastic PBO in which both the upper- and lower-level objectives are expressed as expectations:
\begin{equation}\label{stochasticpessimisticBLO}
	F(x,y)=\E_{\xi\sim D}[F(x,y;\xi)],
	\quad
	f(x,y)=\E_{\xi\sim D}[f(x,y;\xi)],
\end{equation}
where $D$ denotes the data distribution, and $F(x,y;\xi)$ and $f(x,y;\xi)$ are the corresponding sample realizations for $\xi\sim D$.

\subsection[Algorithm for Stochastic PBO]{Algorithm for Stochastic PBO}\label{subsec:stochastic-sipba}
The Stochastic Single-loop Pessimistic Bilevel Algorithm (Sto-SiPBA) retains the single-loop structure of SiPBA, combining stochastic saddle-point tracking with a recursive estimator of the upper-level gradient.

\textbf{Stochastic saddle-point tracking step.} To approximate $\nabla\phi_k(x^k)$, we track the associated saddle point using one stochastic projected ascent--descent step. At iteration $k$, we draw a sample $\xi_k^u$ and evaluate the stochastic directions at $(x^k,y^k,z^k)$:
\begin{equation}\label{direction_yz}
	d_y^k = \nabla_y \psi_k(x^k, y^k, z^k; \xi^u_k), \quad
	d_z^k = \nabla_z \psi_k(x^k, y^k, z^k; \xi^u_k)\textcolor{blue!70!black}{,}
\end{equation}
where $\psi_k(x, y, z; \xi):=F(x,y; \xi)
-\rho_k\bigl(f(x,y; \xi)-f(x,z; \xi)\bigr)	+\frac{\sigma_k}{2}\|z\|^2
-\sigma_k\langle y,z\rangle
-\frac{\delta_k}{2}\|y\|^2$is the sample realization of $\psi_k$.
The iterates are updated by
\begin{equation}\label{update_yz}
	\begin{aligned}
		y^{k+1}
		&=
		\operatorname{Proj}_Y
		\left(y^k+\beta_k d_y^k\right),\quad
		z^{k+1}
		=
		\operatorname{Proj}_Y
		\left(z^k-\beta_k d_z^k\right)\textcolor{blue!70!black}{.}
	\end{aligned}
\end{equation}

\textbf{Upper-level step with a variance-reduced estimator.}
We use the updated pair $(y^{k+1},z^{k+1})$ to construct the upper-level gradient estimator. Even at the exact saddle point $(y_k^*(x^k),z_k^*(x^k))$, a direct stochastic gradient estimate contains sampling noise whose variance need not vanish. To control this error across iterations, we use a STORM-type recursive estimator \cite{cutkosky2019momentum}. Specifically, we draw a fresh sample $\xi_k^x$ and set $d_x^0=\nabla_x\psi_0(x^0,y^{1},z^{1};\xi^x_0)$ and
\begin{equation}
	\begin{aligned}
		\label{direction_x}
		&d_x^k=\nabla_x\psi_k(x^k,y^{k+1},z^{k+1};\xi^x_k)
		+(1-\eta_k)(d_x^{k-1}-\nabla_x \psi_{k-1}(x^{k-1},y^{k},z^{k};\xi^x_k)),
	\end{aligned}
\end{equation}
for $k\ge1$.
Both gradient evaluations use the same fresh sample $\xi_k^x$, with $\psi_k$ and $\psi_{k-1}$ evaluated at their respective iterates and parameter values.

Finally, the upper-level variable is updated by
\begin{equation}\label{update_x}
	x^{k+1}=\mathrm{Proj}_X(x^k-\alpha_k d_x^k).
\end{equation}
The convergence analysis uses the strategies in Proposition~\ref{meritfunctionproposition} for the stepsizes $\alpha_k$ and $\beta_k$, the momentum coefficient $\eta_k$, and the approximation parameters $\rho_k$, $\sigma_k$, and $\delta_k$. The complete procedure is given in Algorithm~\ref{algorithm:ssipba}.

\begin{algorithm}[htbp]
	\caption{
		\textbf{Sto}chastic \textbf{Si}ngle-loop
		\textbf{P}essimistic \textbf{B}ilevel
		\textbf{A}lgorithm (\textbf{Sto-SiPBA})}
	\label{algorithm:ssipba}
	\begin{algorithmic}[1]
		\Require Initial points
		$(x^0,y^0,z^0)\in X\times Y\times Y$
		\For{$k=0,1,\dots,K-1$}
		\State Set the stepsizes $\alpha_k,\beta_k>0$, the momentum coefficient $\eta_k$, the penalty parameter $\rho_k>0$, and the regularization parameters $\sigma_k,\delta_k>0$ according to \eqref{par}.
		\State Sample $\xi_k^u$ and compute the stochastic directions $d_y^k,d_z^k$ using \eqref{direction_yz}.
		\State Update $y^{k+1}$ and $z^{k+1}$ using \eqref{update_yz}.
		\State Sample $\xi_k^x$ and compute $d_x^k$ using \eqref{direction_x}, reusing this sample for both gradient evaluations when $k\ge1$.
		\State Update $x^{k+1}$ using~\eqref{update_x}.
		\EndFor
	\end{algorithmic}
\end{algorithm}

\subsection{Assumptions on Stochastic Oracles}\label{subsec:stoch-assumptions}

We now specify the stochastic-oracle assumptions used in the convergence analysis. Assumption~\ref{ass:Xcompact} remains in force.
Let \(\F_k\) denote the \(\sigma\)-algebra generated by all samples observed
before iteration \(k\), and let \(\F_{k+\frac{1}{2}}\) further include the
current sample \(\xi_k^u\):
\[
\F_k = \sigma \{ \xi_0^u, \xi_0^x, \ldots, \xi_{k-1}^u, \xi_{k-1}^x \},
\qquad
\F_{k+\frac{1}{2}} = \sigma \{ \F_k, \xi_{k}^u\}.
\]
The next assumption requires fresh samples and conditionally unbiased gradient estimates with uniformly bounded variance.
\begin{assumption}\label{ass:oracle}
	For every $k\ge0$, $\xi_k^u$ is independent of $\mathcal F_k$, and $\xi_k^x$ is independent of $\mathcal F_{k+\frac12}$. Moreover, there exists a constant $\Delta>0$ such that the following conditions hold.
	\begin{itemize}
		\item For any square-integrable $\mathcal F_k$-measurable random vectors
		$x\in X$ and $v\in Y$, and for each $h\in\{F,f\}$,
		\begin{align*}
			&\E\left[\nabla_y h(x,v;\xi_k^u)\mid \mathcal F_k\right]
			=
			\nabla_y h(x,v),\\
			&\E\left[
			\|\nabla_y h(x,v;\xi_k^u)-\nabla_y h(x,v)\|^2
			\,\middle|\, \mathcal F_k
			\right]
			\le \Delta^2.
		\end{align*}
		
		\item For any square-integrable $\mathcal F_{k+\frac12}$-measurable random vectors
		$x\in X$ and $v\in Y$, and for each $h\in\{F,f\}$,
		\begin{align*}
			&\E\left[\nabla_x h(x,v;\xi_k^x)\mid \mathcal F_{k+\frac12}\right]
			=\nabla_x h(x,v),\\
			&\E\left[
			\|\nabla_x h(x,v;\xi_k^x)-\nabla_x h(x,v)\|^2
			\,\middle|\, \mathcal F_{k+\frac12}
			\right]
			\le \Delta^2.
		\end{align*}
	\end{itemize}
\end{assumption}

\begin{assumption}\label{ass:twopoint}
	The stochastic oracle permits two-point queries using the same sample $\xi_k^x$. Moreover, the stochastic $x$-gradients satisfy the following mean-square Lipschitz bounds for some constants $\bar L_F,\bar L_f>0$ and all pairs $(x_1,y_1),(x_2,y_2)\in X\times Y$:
	\begin{align*}
		&\E_{\xi}\!\left[\|\nabla_x F(x_1,y_1;\xi)-\nabla_x F(x_2,y_2;\xi)\|^2\right]
		\le \bar{L}_F^2\!\left(\|x_1-x_2\|^2+\|y_1-y_2\|^2\right),\\
		&\E_{\xi}\!\left[\|\nabla_x f(x_1,y_1;\xi)-\nabla_x f(x_2,y_2;\xi)\|^2\right]
		\le \bar{L}_f^2\!\left(\|x_1-x_2\|^2+\|y_1-y_2\|^2\right)
	\end{align*}
\end{assumption}

Let $L_F$ and $L_f$ denote the Lipschitz constants of
$\nabla F$ and $\nabla f$ on $X\times Y$, respectively. Without loss of generality, we enlarge \(L_F,L_f\) so that
\(L_F\ge \bar L_F\) and \(L_f\ge \bar L_f\).
Define the stochastic error in the upper-level gradient estimator by $e_x^k:=d_x^k-\nabla_x\psi_k(x^k,y^{k+1},z^{k+1})$. This error is measured relative to the exact gradient at the updated saddle-point variables.

Under the parameter monotonicity conditions used below,
Lemma~\ref{lem:bounded-saddles-monotone} in
Appendix~\ref{appendix:stochastic-algorithm} gives uniform boundedness of
$(y_k^*(x),z_k^*(x))$ over $k$ and $x\in X$.
Compactness of $X$ and continuity of $f$ and $\nabla f$ therefore ensure
that the following constants are finite:
\[
\begin{aligned}
	M_y
	&:= \sup_{k,\ x\in X}
	\max\big\{\|y_k^*(x)\|,\|z_k^*(x)\|\big\},\\
	M_f
	&:= \sup_{k,\ x\in X}
	\max\big\{|f(x,y_k^*(x))|,\ |f(x,z_k^*(x))|\big\},\\
	M_{\nabla f}
	&:= \sup_{k,\ x\in X}
	\max\big\{\|\nabla f(x,y_k^*(x))\|,\ \|\nabla f(x,z_k^*(x))\|\big\}.
\end{aligned}
\]

\subsection{Auxiliary Lemmas}
This subsection establishes bounds on the saddle-point tracking error, the change in the smooth approximation value, and the mean-square error of the upper-level gradient estimator. The proofs are given in Appendix~\ref{appendix:stochastic-algorithm}.
For notational convenience in the convergence analysis, write $u^k:=(y^k,z^k)$ and $u_k^*(x):=\bigl(y_k^*(x),z_k^*(x)\bigr)$.

The following lemma bounds the conditional mean-square tracking error after one stochastic ascent--descent step, relative to the fixed target $u_k^*(x^k)$, by a contraction term and a sampling-noise term.
\begin{lemma}\label{udescent}
	Let $\{\rho_k\}$, $\{\sigma_k\}$  and $\{\delta_k\}$ be sequences such that $\rho_{k+1} \ge \rho_{k}\ge 0$, $\sigma_{k} \ge\sigma_{k+1}>0$ and $\delta_{k}\ge\delta_{k+1}>0$. Define $\bar{\sigma}_k = \min\{\sigma_k, \delta_k\}$. Suppose the step-size sequence $\{\beta_k\}$ satisfies $0<\beta_k<\frac{\bar{\sigma}_k}{(L_F + \rho_k L_f + 2\sigma_k+\delta_{k})^2}$ for each $k$. Let $\{(x^k, y^k, z^k)\}$ be the sequence generated by Sto-SiPBA. Then
	\begin{equation}\label{udescentequ}
		\E[\|u^{k+1}- u_k^*(x^k)\|^2\mid\F_k]\le(1-\bar{\sigma}_k \beta_k)\|u^{k}-u_k^*(x^k)\|^2+(1+2\rho_k+2\rho_k^2)\beta_k^2\Delta^2.
	\end{equation} 
\end{lemma}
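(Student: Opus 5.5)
The plan is to view the stochastic ascent--descent step as a projected step along the monotone operator $T_k(u):=(-\nabla_y\psi_k(x^k,y,z),\nabla_z\psi_k(x^k,y,z))$ with $u=(y,z)$, perturbed by zero-mean noise. Fix $x^k$ (which is $\F_k$-measurable) and write $u_k^*:=u_k^*(x^k)$. This point is the unique saddle point, so it satisfies the fixed-point relation $u_k^*=\operatorname{Proj}_{Y\times Y}(u_k^*-\beta_kT_k(u_k^*))$. Let $\hat d^k:=(-d_y^k,d_z^k)$ be the stochastic direction. Then $u^{k+1}=\operatorname{Proj}_{Y\times Y}(u^k-\beta_k\hat d^k)$. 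By nonexpansiveness of the projection,
\[
\|u^{k+1}-u_k^*\|^2\le\|u^k-u_k^*-\beta_k(\hat d^k-T_k(u_k^*))\|^2 .
\]
We split $\hat d^k=T_k(u^k)+\epsilon^k$, where $\epsilon^k$ is the sampling error. Expanding the square and taking $\E[\cdot\mid\F_k]$, the cross term involving $\epsilon^k$ vanishes by Assumption~\ref{ass:oracle}, since $u^k,x^k,u_k^*$ are $\F_k$-measurable. This leaves
\[
\|u^k-u_k^*\|^2-2\beta_k\langle T_k(u^k)-T_k(u_k^*),u^k-u_k^*\rangle+\beta_k^2\|T_k(u^k)-T_k(u_k^*)\|^2+\beta_k^2\E[\|\epsilon^k\|^2\mid\F_k].
\]

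Next I check strong monotonicity. The function $\psi_k(x^k,\cdot,\cdot)$ is $\delta_k$-strongly concave in $y$, since $F$ is concave, $-\rho_kf$ is concave and $-\frac{\delta_k}{2}\|y\|^2$ contributes strongly. It is $\sigma_k$-strongly convex in $z$. The bilinear term $-\sigma_k\langle y,z\rangle$ cancels in the symmetric inner product. Hence $\langle T_k(u)-T_k(u'),u-u'\rangle\ge\delta_k\|y-y'\|^2+\sigma_k\|z-z'\|^2\ge\bar\sigma_k\|u-u'\|^2$.

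For the Lipschitz bound, the $y$-block of $T_k$ has constant $L_F+\rho_kL_f+\sigma_k+\delta_k$. The $z$-block has constant $\rho_kL_f+2\sigma_k$. A crude combination gives $\|T_k(u)-T_k(u')\|\le L_k\|u-u'\|$ with $L_k:=L_F+\rho_kL_f+2\sigma_k+\delta_k$. This is presumably why the paper's stepsize condition has exactly this form, and some care in combining the blocks may be needed to reach that constant. With $\beta_k<\bar\sigma_k/L_k^2$ we get $\beta_k^2L_k^2\le\beta_k\bar\sigma_k$. The deterministic part is then at most $(1-2\bar\sigma_k\beta_k+\bar\sigma_k\beta_k)\|u^k-u_k^*\|^2=(1-\bar\sigma_k\beta_k)\|u^k-u_k^*\|^2$.

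For the noise term, the deterministic terms $\sigma_kz$ and $\delta_ky$ cancel, so
\[
\epsilon^k_y=-(\nabla_yF(\cdot;\xi)-\nabla_yF)+\rho_k(\nabla_yf(x^k,y^k;\xi)-\nabla_yf(x^k,y^k)),\qquad \epsilon^k_z=\rho_k(\nabla_yf(x^k,z^k;\xi)-\nabla_yf(x^k,z^k)).
\]
Bounding $\|a+b\|^2\le2\|a\|^2+2\|b\|^2$ would give $2\Delta^2+2\rho_k^2\Delta^2$ for the $y$-part. To get exactly $(1+2\rho_k+2\rho_k^2)\Delta^2$, I would instead expand the $y$-part as $\|a\|^2+2\langle a,b\rangle+\|b\|^2$ and bound the cross term by Cauchy--Schwarz in conditional expectation, giving $\Delta^2+2\rho_k\Delta^2+\rho_k^2\Delta^2$. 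Adding the $z$-part, $\rho_k^2\Delta^2$, yields the stated constant.

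The main obstacle is bookkeeping rather than ideas. The first point is matching the exact Lipschitz constant $L_k$ of the coupled operator, including the cross terms from $\sigma_k\langle y,z\rangle$. The second is the precise noise constant. The measurability needed to kill the cross term is straightforward.
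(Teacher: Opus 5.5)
Your proposal is correct and follows the paper's proof essentially step for step: the fixed-point characterization of $u_k^*(x^k)$, nonexpansiveness of $\operatorname{Proj}_{Y\times Y}$, the vanishing conditional cross term, strong monotonicity with modulus $\bar\sigma_k$, the Lipschitz bound \eqref{T_Lip}, and the noise constant $(1+2\rho_k+2\rho_k^2)\Delta^2$, which you derive explicitly where the paper only asserts it. On the point you flagged, your per-block constants $a$ and $b$, each measured against $\|u-u'\|$, combine only to $\sqrt{a^2+b^2}$, which can exceed $L_F+\rho_kL_f+2\sigma_k+\delta_k$ (e.g.\ it is $\sqrt2\,\rho_kL_f$ when $L_F,\sigma_k,\delta_k$ are negligible); instead split $T_k$ into two parts: the block-diagonal gradient map, Lipschitz with constant $L_F+\rho_kL_f$ because its two blocks act on disjoint coordinates, and the linear map $(y,z)\mapsto(\delta_k y+\sigma_k z,\ \sigma_k z-\sigma_k y)$, whose operator norm is at most $\delta_k+\sqrt3\,\sigma_k\le\delta_k+2\sigma_k$.
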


The saddle point changes with both $x^k$ and the parameters $\rho_k,\sigma_k,\delta_k$. Combining Lemma~\ref{udescent} with the sensitivity estimates from Lemmas~\ref{uxk+1xk} and~\ref{uk+1uk}  in Appendix~\ref{appendix:stochastic-algorithm} accounts for these changes and yields the following bound relative to  $u_{k+1}^*(x^{k+1})$.
\begin{lemma}\label{udescentlemma}
	Let $\{\rho_k\}$, $\{\sigma_k\}$  and $\{\delta_k\}$ be sequences such that $\rho_{k+1} \ge \rho_{k}>0$, $\sigma_{k} \ge\sigma_{k+1}>0$ and $\delta_{k}\ge\delta_{k+1}>0$. Define $\bar{\sigma}_k = \min\{\sigma_k, \delta_k\}$. Suppose the step-size sequence $\{\beta_k\}$ satisfies $0<\beta_k<\frac{\bar{\sigma}_k}{(L_F + \rho_k L_f + 2\sigma_k+\delta_k)^2}$ for each $k$. Let $\{(x^k, y^k, z^k)\}$ be the sequence generated by Sto-SiPBA.
	\textcolor{blue!70!black}{Then}
	\begin{equation}\label{udescentlemmaeq}
		\begin{aligned}
			& \E[\|u^{k+1}- u_{k+1}^*(x^{k+1})\|^2\mid \F_k]-\|u^{k}-u_{k}^*(x^{k})\|^2 \\\le&-\frac{1}{2}\beta_k\bar{\sigma}_k \|u^{k}-u_{k}^*(x^{k})\|^2 + 2(1+\frac{2}{\beta_k\bar{\sigma}_k})\frac{(L_{F}+2\rho_kL_{f})^2}{\bar{\sigma}_k^2}\E[\|x^{k+1}-x^k\|^2\mid \F_k] \\
			&
			+2(1+\frac{2}{\beta_k\bar{\sigma}_k}) \left( \frac{12(\rho_{k+1} - \rho_k)^2}{\bar{\sigma}_k^2}M_{\nabla f}^2 + \frac{27(\sigma_{k} - \sigma_{k+1})^2}{\bar{\sigma}_k^2}M_y^2\right.\\
			&\left.+\frac{3(\delta_{k} - \delta_{k+1})^2}{\bar{\sigma}_k^2}M_y^2 \right)+(1+\frac{1}{2}\beta_k \bar{\sigma}_k)(1+2\rho_k+2\rho_k^2)\beta_k^2 \Delta^2.
		\end{aligned}
	\end{equation}
\end{lemma}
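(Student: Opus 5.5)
The plan is to combine Lemma~\ref{udescent} with the sensitivity estimates of Lemmas~\ref{uxk+1xk} and~\ref{uk+1uk} through a Young-type splitting. We write
\[
u^{k+1}-u_{k+1}^*(x^{k+1})
=\bigl(u^{k+1}-u_k^*(x^k)\bigr)
+\bigl(u_k^*(x^k)-u_{k+1}^*(x^{k+1})\bigr).
\]
Applying $\|a+b\|^2\le(1+\epsilon)\|a\|^2+(1+\epsilon^{-1})\|b\|^2$ with $\epsilon=\tfrac12\beta_k\bar\sigma_k$ gives
\[
\|u^{k+1}-u_{k+1}^*(x^{k+1})\|^2\le\Bigl(1+\tfrac12\beta_k\bar\sigma_k\Bigr)\|u^{k+1}-u_k^*(x^k)\|^2+\Bigl(1+\tfrac{2}{\beta_k\bar\sigma_k}\Bigr)\|u_k^*(x^k)-u_{k+1}^*(x^{k+1})\|^2 .
\]
We then take conditional expectations given $\F_k$. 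Since $u_k^*(x^k)$ is $\F_k$-measurable, Lemma~\ref{udescent} bounds the first term by
\[
\Bigl(1+\tfrac12\beta_k\bar\sigma_k\Bigr)(1-\beta_k\bar\sigma_k)\|u^k-u_k^*(x^k)\|^2+\Bigl(1+\tfrac12\beta_k\bar\sigma_k\Bigr)(1+2\rho_k+2\rho_k^2)\beta_k^2\Delta^2 .
\]
The product of the contraction factors satisfies $(1+\tfrac12 a)(1-a)\le 1-\tfrac12 a$ for $a=\beta_k\bar\sigma_k\in(0,1)$. The step-size condition guarantees $a<1$, since $\bar\sigma_k^2<(L_F+\rho_kL_f+2\sigma_k+\delta_k)^2$. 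Subtracting $\|u^k-u_k^*(x^k)\|^2$ then produces the leading $-\tfrac12\beta_k\bar\sigma_k$ term and the noise term exactly as stated.

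For the drift term, we use a second splitting,
\[
\|u_k^*(x^k)-u_{k+1}^*(x^{k+1})\|^2\le 2\|u_k^*(x^k)-u_k^*(x^{k+1})\|^2+2\|u_k^*(x^{k+1})-u_{k+1}^*(x^{k+1})\|^2 .
\]
The first piece is controlled by Lemma~\ref{uxk+1xk}, the Lipschitz continuity of the saddle map in $x$ with modulus $(L_F+2\rho_kL_f)/\bar\sigma_k$, which yields the $\E[\|x^{k+1}-x^k\|^2\mid\F_k]$ term. The second piece is controlled by Lemma~\ref{uk+1uk}, the parameter sensitivity at fixed $x$. It should give
\[
\frac{12(\rho_{k+1}-\rho_k)^2M_{\nabla f}^2+27(\sigma_k-\sigma_{k+1})^2M_y^2+3(\delta_k-\delta_{k+1})^2M_y^2}{\bar\sigma_k^2}.
\]
This bound follows from strong monotonicity, with modulus $\bar\sigma_k$, of the saddle operator of $\psi_{k+1}(x,\cdot,\cdot)$. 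The perturbation of that operator is evaluated at $u_k^*(x)$. It equals $(\rho_{k+1}-\rho_k)(-\nabla_yf,\nabla_yf)$, plus terms from $\sigma$ and $\delta$ acting on $y,z$. These are bounded using $M_{\nabla f}$ and $M_y$, which are finite uniformly in $k$ by Lemma~\ref{lem:bounded-saddles-monotone} under the monotone parameter assumptions. The numerical constants come from splitting the perturbation into three parts.

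The main obstacle is this parameter-sensitivity step, specifically obtaining constants uniform in $k$. That requires the monotonicity hypotheses $\rho_{k+1}\ge\rho_k$, $\sigma_{k+1}\le\sigma_k$, $\delta_{k+1}\le\delta_k$ so that Lemma~\ref{lem:bounded-saddles-monotone} applies. It also requires using $\bar\sigma_{k+1}$ versus $\bar\sigma_k$ carefully in the strong-monotonicity modulus. The remaining steps are direct bookkeeping of the Young inequalities, with the factor $2(1+2/(\beta_k\bar\sigma_k))$ multiplying both the $x$-drift and the parameter-drift terms, as in \eqref{udescentlemmaeq}.
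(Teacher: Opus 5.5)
Your proposal is correct and follows the paper's proof essentially step for step. The steps coincide: the same Young splitting with $\tau=\tfrac12\beta_k\bar\sigma_k$; Lemma~\ref{udescent} for the contracted term via $(1+\tfrac12\beta_k\bar\sigma_k)(1-\beta_k\bar\sigma_k)\le 1-\tfrac12\beta_k\bar\sigma_k$; and a factor-$2$ split of the drift into an $x$-part bounded by Lemma~\ref{uxk+1xk} and a parameter part bounded by Lemma~\ref{uk+1uk} at $x=x^{k+1}$, where squaring that three-term bound with $(a+b+c)^2\le 3(a^2+b^2+c^2)$ gives the constants $12,27,3$. One small correction, only relevant if you re-derive Lemma~\ref{uk+1uk} instead of citing it: the modulus $\bar\sigma_k$ comes from the strong monotonicity of $T_k$ with the perturbation $T_{k+1}-T_k$ evaluated at $u_{k+1}^*(x)$, since the operator of $\psi_{k+1}$ only has modulus $\bar\sigma_{k+1}\le\bar\sigma_k$.
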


The next lemma bounds the change in the smooth approximation value after the upper-level update, accounting for the tracking error, the stochastic estimation error, and the parameter changes.
\begin{lemma}\label{xdescentlemma}
	Let $\{\rho_k\}$, $\{\sigma_k\}$  and $\{\delta_k\}$ be sequences such that $\rho_{k+1} \ge \rho_{k}>0$, $\sigma_{k} \ge\sigma_{k+1}>0$ and $\delta_{k}\ge\delta_{k+1}>0$. Define $\bar{\sigma}_k = \min\{\sigma_k, \delta_k\}$. Suppose the step-size sequence $\{\beta_k\}$ satisfies $0<\beta_k<\frac{\bar{\sigma}_k}{(L_F + \rho_k L_f + 2\sigma_k+\delta_k)^2}$ for each $k$. Let $\{(x^k, y^k, z^k)\}$ be the sequence generated by Sto-SiPBA.  Then
	\begin{equation}\label{xdescentlemma_eq}
		\begin{aligned}
			& \E[\phi_{k+1}(x^{k+1})\mid \F_k]-\phi_{k}(x^k) + \left( \frac{1}{4\alpha_k} - \frac{L_{\phi_k}}{2}\right) \E[\|x^{k+1}-x^{k}\|^2\mid \F_k] \\ \le\,& \frac{\alpha_k}{2} (L_F + 2\rho_k L_f)^2 (1-\bar{\sigma}_k \beta_k) \| u^{k}- u_{k}^*(x^k)\|^2+\alpha_k\E[\|e_x^k\|^2\mid \F_k]  \\
			&+ \left( \sigma_k - \sigma_{k+1} \right)\frac{M_y^2 }{2} + 2\left(\rho_{k+1}-\rho_{k}\right)M_f+ \left( \delta_k - \delta_{k+1} \right)\frac{M_y^2 }{2} \\
			&+\frac{\alpha_k}{2} (L_F + 2\rho_k L_f)^2(1+2\rho_k+2\rho_k^2)\beta_k^2\Delta^2,
		\end{aligned}
	\end{equation}
	where $L_{\phi_k}:= \frac{(L_F + 2\rho_kL_f)(L_{F}+2\rho_kL_{f} + \bar{\sigma}_k)}{\bar{\sigma}_k} $ is a Lipschitz constant for $\nabla\phi_k$, and $e_x^k$ is the stochastic estimation error defined above.
\end{lemma}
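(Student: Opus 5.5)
We prove Lemma~\ref{xdescentlemma} by splitting the left-hand side into two parts,
\[
\phi_{k+1}(x^{k+1})-\phi_k(x^k)=\bigl[\phi_k(x^{k+1})-\phi_k(x^k)\bigr]+\bigl[\phi_{k+1}(x^{k+1})-\phi_k(x^{k+1})\bigr].
\]
The first bracket is a descent step for the fixed smooth function $\phi_k$. The second bracket accounts for the change of parameters from $(\rho_k,\sigma_k,\delta_k)$ to $(\rho_{k+1},\sigma_{k+1},\delta_{k+1})$ at a fixed point.

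\textbf{Descent step.} We use the $L_{\phi_k}$-smoothness of $\phi_k$ (Lemma~\ref{Lipshitzofphi}) to write
\[
\phi_k(x^{k+1})\le\phi_k(x^k)+\langle\nabla\phi_k(x^k),x^{k+1}-x^k\rangle+\tfrac{L_{\phi_k}}{2}\|x^{k+1}-x^k\|^2 .
\]
The projection characterization of \eqref{update_x} gives $\langle d_x^k,x^{k+1}-x^k\rangle\le-\frac1{\alpha_k}\|x^{k+1}-x^k\|^2$. Writing $\nabla\phi_k(x^k)=d_x^k+(\nabla\phi_k(x^k)-d_x^k)$ and applying Young's inequality with weight $\frac{1}{2\alpha_k}$ to the error term (in fact $\frac1{4\alpha_k}$-type splitting suffices), we obtain
\[
\phi_k(x^{k+1})-\phi_k(x^k)+\Bigl(\tfrac{1}{2\alpha_k}-\tfrac{L_{\phi_k}}2\Bigr)\|x^{k+1}-x^k\|^2\le \tfrac{\alpha_k}{2}\|\nabla\phi_k(x^k)-d_x^k\|^2 .
\]
The error splits as
\[
\nabla\phi_k(x^k)-d_x^k=\bigl[\nabla_x\psi_k(x^k,u_k^*(x^k))-\nabla_x\psi_k(x^k,u^{k+1})\bigr]-e_x^k .
\]
By Theorem~\ref{differentiable} and the Lipschitz continuity of $\nabla F,\nabla f$, the first bracket has norm at most $(L_F+2\rho_kL_f)\|u^{k+1}-u_k^*(x^k)\|$. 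Using $\|a+b\|^2\le 2\|a\|^2+2\|b\|^2$ then gives the coefficient $\alpha_k(L_F+2\rho_kL_f)^2$ on the tracking term and $\alpha_k$ on $\|e_x^k\|^2$. The stated constants $\frac{\alpha_k}{2}$ and $\frac1{4\alpha_k}$ indicate that the Young weights should be tuned so that $\frac14$ of the $\|x^{k+1}-x^k\|^2/\alpha_k$ budget is consumed. We will fix these weights when writing out the proof.

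\textbf{Conditioning.} We take $\E[\cdot\mid\F_k]$ and invoke Lemma~\ref{udescent} to replace $\E\|u^{k+1}-u_k^*(x^k)\|^2$ by
\[
(1-\bar\sigma_k\beta_k)\|u^k-u_k^*(x^k)\|^2+(1+2\rho_k+2\rho_k^2)\beta_k^2\Delta^2 .
\]
This produces the first and last terms on the right-hand side of \eqref{xdescentlemma_eq}.

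\textbf{Parameter change.} For the second bracket, fix $x=x^{k+1}$ and compare the saddle values. Let $(y',z')$ be the saddle point for index $k+1$ and $(y,z)$ the one for index $k$. Using the max--min and min--max representations \eqref{phi},
\[
\phi_{k+1}(x)-\phi_k(x)\le\psi_{k+1}(x,y',z)-\psi_k(x,y',z).
\]
This holds because $\phi_{k+1}(x)\le\max_{y}\psi_{k+1}(x,y,z)=\psi_{k+1}(x,y',z)$ for the fixed $z$, where $y'$ now denotes the maximizer for that $z$, and similarly $\phi_k(x)\ge\psi_k(x,y',z)$. Expanding \eqref{psi}, the difference equals
\[
-(\rho_{k+1}-\rho_k)\bigl(f(x,y')-f(x,z)\bigr)+\tfrac{\sigma_{k+1}-\sigma_k}{2}\bigl(\|z\|^2-2\langle y',z\rangle\bigr)-\tfrac{\delta_{k+1}-\delta_k}{2}\|y'\|^2 .
\]
Rewriting the $\sigma$ term as $\tfrac{\sigma_{k+1}-\sigma_k}{2}(\|z-y'\|^2-\|y'\|^2)$, its first part is nonpositive by monotonicity, leaving $\frac{\sigma_k-\sigma_{k+1}}2\|y'\|^2$. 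The $\delta$ term gives $\frac{\delta_k-\delta_{k+1}}2\|y'\|^2$ directly. The $\rho$ term is bounded by $2(\rho_{k+1}-\rho_k)M_f$.

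\textbf{Main obstacle.} This last step is where we expect the difficulty. The point $y'$ must be a point whose norm and $f$-value are controlled by $M_y$ and $M_f$, but these constants are defined only on actual saddle points. To resolve this, I would instead choose the pair so that only saddle-point quantities appear: use $y'=y_{k+1}^*(x)$ and $z=z_k^*(x)$, bounding $\phi_{k+1}(x)\le\psi_{k+1}(x,y_{k+1}^*,z_k^*)$ and $\phi_k(x)\ge\psi_k(x,y_{k+1}^*,z_k^*)$. Both inequalities hold by the saddle property. Every quantity involved is then a saddle-point component, so $M_y$, $M_f$, and Lemma~\ref{lem:bounded-saddles-monotone} apply. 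Summing the bounds and taking conditional expectations completes the proof.
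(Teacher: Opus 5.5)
Your proposal is correct and follows the paper's proof: the same split into a parameter-change term and a descent term for $\phi_k$, the descent lemma with the projection inequality, Young's inequality and Lemma~\ref{udescent}; your final choice of the pair $\bigl(y_{k+1}^*(x^{k+1}),z_k^*(x^{k+1})\bigr)$ also gives a correct self-contained proof of the parameter-change bound, which the paper simply cites as Lemma~\ref{lem_b6}. The one loose end is the constants: with $\|a+b\|^2\le 2\|a\|^2+2\|b\|^2$ you get an inequality not comparable to \eqref{xdescentlemma_eq}, so, as the paper does, apply Young separately to the two inner products, bounding them by $\frac{\alpha_k}{2}(L_F+2\rho_kL_f)^2\|u^{k+1}-u_k^*(x^k)\|^2+\frac{1}{2\alpha_k}\|x^{k+1}-x^k\|^2$ and by $\alpha_k\|e_x^k\|^2+\frac{1}{4\alpha_k}\|x^{k+1}-x^k\|^2$, so that three quarters of the $\frac{1}{\alpha_k}\|x^{k+1}-x^k\|^2$ budget is consumed and one quarter remains (the reverse of what you wrote).
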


Finally, the next lemma gives a recursion for the conditional mean-square estimation error, accounting for sampling noise and changes in the iterates and parameters.

\begin{lemma}\label{variancereduction}
	Let $\{(x^k,y^k,z^k)\}$ be the sequence generated by Sto-SiPBA.
	Suppose that $\{\rho_k\}$, $\{\sigma_k\}$  and $\{\delta_k\}$ satisfy
	$\rho_{k+1}\ge \rho_k>0$, $\sigma_k\ge\sigma_{k+1}>0$ and $\delta_k\ge\delta_{k+1}>0$, that
	$0<\beta_k<\frac{\bar{\sigma}_k}{(L_F + \rho_k L_f + 2\sigma_k+\delta_k)^2}$, and that $0\le \eta_k\le 1$. Then, for every
	$k>0$,
	\[
	\begin{aligned}
		&\E\!\left[\|e_x^k\|^2\mid \F_k\right]-\|e_x^{k-1}\|^2\\
		\le\;&	(-2\eta_k+\eta_k^2)	\|e_x^{k-1}\|^2
		+(6+12\rho_k^2)\eta_k^2\Delta^2
		+16(\rho_k-\rho_{k-1})^2\Delta^2 \\
		&
		+36\beta_{k}^2L_k^2[(1+2\rho_k+2\rho_k^2)\Delta^2+(L_F+\rho_{k}L_f+2\sigma_{k}+\delta_k )^2\|u^{k}-u_{k}^*(x^{k})\|^2\\
		&+ \rho_{k}^2 M_{T}]+12L_k^2\|x^k-x^{k-1}\|^2,
	\end{aligned}
	\]
	where $L_k:= L_F+2\rho_kL_f$ and $M_T>0$ is a constant independent of $k$.
\end{lemma}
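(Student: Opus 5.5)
We expand the STORM recursion directly. Writing $\nabla_x\psi_k^{k+1}:=\nabla_x\psi_k(x^k,y^{k+1},z^{k+1})$ and $\nabla_x\psi_k^{k+1}(\xi)$ for its sample version, the definition of $d_x^k$ gives
\[
e_x^k=(1-\eta_k)e_x^{k-1}+\eta_k\bigl(\nabla_x\psi_k^{k+1}(\xi_k^x)-\nabla_x\psi_k^{k+1}\bigr)+(1-\eta_k)T_k ,
\]
where
\[
T_k:=\bigl(\nabla_x\psi_k^{k+1}(\xi_k^x)-\nabla_x\psi_{k-1}^{k}(\xi_k^x)\bigr)-\bigl(\nabla_x\psi_k^{k+1}-\nabla_x\psi_{k-1}^{k}\bigr).
\]
The vectors $x^k,y^{k+1},z^{k+1},x^{k-1},y^k,z^k$ are all $\F_{k+\frac12}$-measurable. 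The fresh sample $\xi_k^x$ is independent of $\F_{k+\frac12}$, so Assumption~\ref{ass:oracle} makes the last two groups zero-mean conditionally on $\F_{k+\frac12}$. Since $e_x^{k-1}$ is $\F_k$-measurable, the cross term with $e_x^{k-1}$ vanishes. We obtain
\[
\E[\|e_x^k\|^2\mid\F_{k+\frac12}]\le(1-\eta_k)^2\|e_x^{k-1}\|^2+2\eta_k^2\E\|\text{noise}_k\|^2+2(1-\eta_k)^2\E\|T_k\|^2 .
\]
Here $(1-\eta_k)^2-1=-2\eta_k+\eta_k^2$, which is exactly the first coefficient in the claim. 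The noise at a single point consists of $\nabla_xF$ plus $\rho_k$ times two $\nabla_x f$ terms. Each has variance at most $\Delta^2$, so expanding with $(a+b+c)^2\le 3(\cdot)$ gives a bound of order $(3+6\rho_k^2)\Delta^2$, hence the term $(6+12\rho_k^2)\eta_k^2\Delta^2$.

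For $T_k$ we use $\E\|X-\E X\|^2\le\E\|X\|^2$ and split the difference of sample gradients. Since $\psi_k$ and $\psi_{k-1}$ differ in $\rho$, we write $\rho_k(\cdot)_k-\rho_{k-1}(\cdot)_{k-1}=\rho_k[(\cdot)_k-(\cdot)_{k-1}]+(\rho_k-\rho_{k-1})(\cdot)_{k-1}$. The $\sigma,\delta$ terms do not depend on $x$ and drop out. The parameter-change piece, after centering, is bounded by the variance $\Delta^2$ times $(\rho_k-\rho_{k-1})^2$ with a constant factor, giving $16(\rho_k-\rho_{k-1})^2\Delta^2$. The remaining piece is handled by the mean-square Lipschitz condition in Assumption~\ref{ass:twopoint} with $\bar L_F\le L_F$ and $\bar L_f\le L_f$. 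It is bounded by a constant times $L_k^2\bigl(\|x^k-x^{k-1}\|^2+\|u^{k+1}-u^k\|^2\bigr)$. The $x$-part produces $12L_k^2\|x^k-x^{k-1}\|^2$.

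The main obstacle is bounding $\E[\|u^{k+1}-u^k\|^2\mid\F_k]$ in terms of the tracking error. Nonexpansiveness of $\operatorname{Proj}_Y$ gives $\|u^{k+1}-u^k\|\le\beta_k\|(d_y^k,d_z^k)\|$. We then split the stochastic direction into three parts: the oracle noise, with second moment at most $(1+2\rho_k+2\rho_k^2)\Delta^2$ as in Lemma~\ref{udescent}; the exact gradient difference $\nabla_u\psi_k(x^k,u^k)-\nabla_u\psi_k(x^k,u_k^*(x^k))$, bounded by $(L_F+\rho_kL_f+2\sigma_k+\delta_k)\|u^k-u_k^*(x^k)\|$; and the exact gradient at the saddle point. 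The last part is the delicate one. Because $u_k^*(x^k)$ is a saddle point of the constrained problem, this gradient lies in the normal cones of $Y$ rather than vanishing, so it is not zero. Its size is at most $\rho_k$ times a constant, using $M_y$, $M_{\nabla f}$, $\|\nabla F\|$ bounded on compact sets (via Lemma~\ref{lem:bounded-saddles-monotone}), and $\sigma_k,\delta_k$ bounded by their initial values. This yields the $\rho_k^2M_T$ term. A three-way Young's inequality then gives the bracketed $36\beta_k^2L_k^2[\cdots]$ term.

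Finally, we take $\E[\cdot\mid\F_k]$ by the tower property, use $(1-\eta_k)^2\le1$ on the $T_k$ contribution, and subtract $\|e_x^{k-1}\|^2$ from both sides. The constants $6,12,16,36$ come from the Young's splittings; we would track them loosely and enlarge them where needed.
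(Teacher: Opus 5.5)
Your proposal is correct and follows the paper's proof essentially step for step: it uses the STORM error recursion conditioned on $\F_{k+\frac12}$, centers the $(\rho_k-\rho_{k-1})$ piece separately from the mean-square-Lipschitz piece, and applies the three-way bound on $\E[\|u^{k+1}-u^k\|^2\mid\F_k]$, in which the saddle-point gradient contributes $\rho_k^2M_T$. The only difference is cosmetic: you attach the $\eta_k$ weight to the noise at the current point and $(1-\eta_k)$ to the correction term, whereas the paper writes the new error as $A_k+(1-\eta_k)B_k=(A_k+B_k)-\eta_kB_k$ and puts $\eta_k$ on the noise at the previous point; both groupings yield exactly the constants $6+12\rho_k^2$, $16$, $12$, and $36$.
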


\subsection{Convergence Analysis}
We first derive nonasymptotic bounds on the expected squared projected-gradient residual and saddle-point tracking error. We then establish, under the stated regularity assumptions, almost-sure subsequential convergence to C-stationary points of the original PBO.

To combine the preceding estimates, we define, for $k\ge1$, the augmented merit function with positive coefficients $a_k,b_k,c_k,d_k$:
\begin{align}\label{V_def2}
	V_k = a_k(\phi_k(x^k)- \underline{\phi}) + b_k\| u^{k}- u_{k}^*(x^k)\|^2+c_k\|e_x^{k-1}\|^2+d_k\|x^{k}-x^{k-1}\|^2,
\end{align}
Here, $\underline\phi$ is a uniform lower bound satisfying $\phi_k(x^k)\ge\underline\phi$ for all $k$. Under the assumption that $\phi$ is bounded below on $X$, such a bound exists by Lemma~\ref{lem:det-concave-lower-bound} in Appendix~\ref{appendix:stochastic-algorithm}. The four terms combine the shifted objective value, the squared tracking error, the previous squared estimation error, and the squared upper-level step, respectively. 
In particular, $V_k\ge0$.

The following proposition gives a conditional descent inequality for $V_k$ under suitable stepsize and parameter schedules.

\begin{proposition}\label{meritfunctionproposition}
	Let $\{(x^k,y^k,z^k)\}$ be the sequence generated by Sto-SiPBA with 
	\begin{equation}\label{par}
		\begin{aligned}
			&\alpha_k=\alpha_0(k+1)^{-9t-s},\;
			\beta_k=\beta_0(k+1)^{-4t-s},\; 	\eta_k=\eta_0(k+1)^{-5t-s},
			\\
			&	\sigma_k=\sigma_0(k+1)^{-t},\; \delta_k=\delta_0(k+1)^{-t},\;
			\rho_k=\rho_0(k+1)^t,\;
		\end{aligned}
	\end{equation}
	with $\alpha_0, \beta_0, \sigma_0, \delta_0,\rho_0,  t >0$ and $1\ge\eta_0>0$. Suppose that the function $\phi(x)$ is bounded below on the set $X$. Let the coefficients in the merit function $V_k$ be
	$a_k=(k+1)^{-2t},\; b_k=(k+1)^{-3t},\; c_k=(k+1)^{-5t},\; d_k=(k+1)^{-2t}.$
	Assume that 
	$0<t,s<1$ satisfy $11t+s<1$. 
	If $\beta_0/\sigma_0$ is sufficiently small, then for all sufficiently large $k$,
	\begin{equation}\label{meritfunctionproposition_eq}
		\begin{aligned}
			&\E[V_{k+1}\mid \F_k]-V_k\\
			\le\;&
			-\frac{a_k\alpha_k}{24}\|\mathcal G_k(x^k)\|^2
			-\frac{b_k\beta_k\bar\sigma_k}{4} \|u^k-u_k^*(x^k)\|^2
			+Ck^{-9t-2s}+\zeta_k,
		\end{aligned}
	\end{equation}
	where $V_k$ is defined in \eqref{V_def2}, $\bar\sigma_k:=\min\{\sigma_k,\delta_k\}$, $C>0$ is independent of $k$, and $\{\zeta_k\}$ is a nonnegative summable sequence, i.e., $\sum_{k=0}^{\infty}\zeta_k<\infty$.
\end{proposition}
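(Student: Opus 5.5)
The argument is a weighted Lyapunov combination of Lemmas~\ref{udescentlemma}, \ref{xdescentlemma} and \ref{variancereduction}. We multiply each by its coefficient, add the change in the step term $d_k\|x^k-x^{k-1}\|^2$, and account for the coefficient drift $a_{k+1}-a_k\le0$ (and likewise for $b_k,c_k,d_k$). Since all coefficients are nonincreasing, we write
\[
\E[V_{k+1}\mid\F_k]-V_k\le a_k\bigl(\E[\phi_{k+1}(x^{k+1})\mid\F_k]-\phi_k(x^k)\bigr)+(a_{k+1}-a_k)(\cdot)+\dots,
\]
where the drift terms multiply nonnegative quantities and can be dropped. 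The only exception is $a_{k+1}-a_k$, which multiplies $\phi_{k+1}-\underline\phi\ge0$, so it can be dropped as well. We then bound the four differences: the $\phi$-difference by Lemma~\ref{xdescentlemma}, the tracking difference by Lemma~\ref{udescentlemma}, the estimator difference $c_{k+1}\E\|e_x^k\|^2-c_k\|e_x^{k-1}\|^2$ by Lemma~\ref{variancereduction}, and the step difference $d_{k+1}\E\|x^{k+1}-x^k\|^2-d_k\|x^k-x^{k-1}\|^2$ directly.

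With the step term expanded, the descent terms are collected as follows.
\begin{enumerate}
\item From Lemma~\ref{xdescentlemma} we get $-a_k(\tfrac1{4\alpha_k}-\tfrac{L_{\phi_k}}2)\E\|x^{k+1}-x^k\|^2$. Since $L_{\phi_k}=O(k^{3t})$ and $\alpha_k^{-1}\sim k^{9t+s}$, this is at most $-\tfrac{a_k}{8\alpha_k}\E\|x^{k+1}-x^k\|^2$ for large $k$.
\item From Lemma~\ref{udescentlemma} we get $-\tfrac12 b_k\beta_k\bar\sigma_k\|u^k-u_k^*\|^2$.
\item From Lemma~\ref{variancereduction} we get $-c_k\eta_k\|e_x^{k-1}\|^2$, after noting $-2\eta_k+\eta_k^2\le-\eta_k$.
\item From the step term we get $-d_k\|x^k-x^{k-1}\|^2$.
\end{enumerate}

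Each positive cross term is then absorbed by one of these.
\begin{enumerate}
\item The tracking-to-$x$ term of Lemma~\ref{udescentlemma} has coefficient $b_k\frac{4}{\beta_k\bar\sigma_k}\frac{(L_F+2\rho_kL_f)^2}{\bar\sigma_k^2}\sim k^{-3t}k^{5t+s}k^{4t}=k^{6t+s}$. Together with $d_{k+1}\sim k^{-2t}$, this must be dominated by $a_k/\alpha_k\sim k^{7t+s}$, which holds.
\item The term $a_k\alpha_kL_k^2\|u^k-u_k^*\|^2\sim k^{-2t-9t-s+2t}=k^{-9t-s}$ is beaten by $b_k\beta_k\bar\sigma_k\sim k^{-8t-s}$.
\item The term $c_{k+1}36\beta_k^2L_k^2(L_F+\rho_kL_f+\dots)^2\|u^k-u_k^*\|^2\sim k^{-5t-8t-2s+4t}=k^{-9t-2s}$ is also beaten by $b_k\beta_k\bar\sigma_k$.
\item The term $a_k\alpha_k\E\|e_x^k\|^2$ is rewritten using Lemma~\ref{variancereduction}, giving $\lesssim a_k\alpha_k\|e_x^{k-1}\|^2+\dots$ with $a_k\alpha_k\sim k^{-11t-s}\ll c_k\eta_k\sim k^{-10t-s}$.
\item The term $12c_{k+1}L_k^2\|x^k-x^{k-1}\|^2\sim k^{-3t}$ is dominated by $d_k\sim k^{-2t}$.
\end{enumerate}
Finally, we convert the surviving $-\tfrac{a_k}{16\alpha_k}\E\|x^{k+1}-x^k\|^2$ into a $\|\mathcal G_k(x^k)\|^2$ term. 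Nonexpansiveness of $\operatorname{Proj}_X$ gives
\[
\|\alpha_k\mathcal G_k(x^k)\|^2\le 2\|x^{k+1}-x^k\|^2+2\alpha_k^2\|d_x^k-\nabla\phi_k(x^k)\|^2 .
\]
We bound $\|d_x^k-\nabla\phi_k\|^2\le 2\|e_x^k\|^2+2L_k^2\|u^{k+1}-u_k^*(x^k)\|^2$ and treat it via Lemma~\ref{udescent} and Lemma~\ref{variancereduction} exactly as above. This yields the coefficient $-a_k\alpha_k/24$ after adjusting constants; the extra error terms are of the same orders already absorbed.

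The remaining terms are the noise terms and the parameter-drift terms, and this bookkeeping is the main obstacle. The noise terms are $c_{k+1}(6+12\rho_k^2)\eta_k^2\Delta^2\sim k^{-5t+2t-10t-2s}=k^{-13t-2s}$, $b_k\rho_k^2\beta_k^2\Delta^2\sim k^{-9t-2s}$, and $c_{k+1}\beta_k^2L_k^2\rho_k^2M_T\sim k^{-9t-2s}$ (up to the Lemma~\ref{variancereduction} factors). These give the $Ck^{-9t-2s}$ term; the exponent $11t+s<1$ is what makes this rate useful after summation, but it is not needed here.

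The parameter-drift terms are $a_k(\rho_{k+1}-\rho_k)M_f\sim k^{-2t}k^{t-1}=k^{-t-1}$, and $a_k(\sigma_k-\sigma_{k+1})$ and $a_k(\delta_k-\delta_{k+1})$ of order $k^{-3t-1}$. In the tracking lemma they appear as $b_k\frac{1}{\beta_k\bar\sigma_k}\frac{(\rho_{k+1}-\rho_k)^2}{\bar\sigma_k^2}\sim k^{-3t+5t+s+2t+2t-2}=k^{6t+s-2}$, and in Lemma~\ref{variancereduction} the term $c_k(\rho_k-\rho_{k-1})^2\sim k^{-3t-2}$ appears. Each of these is summable precisely because $6t+s<1$, and they are collected into $\zeta_k$.

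The delicate step is checking that every exponent comparison above holds strictly under $11t+s<1$ alone, since a single $k^{t}$ slack is lost in several places. The other delicate point is the requirement that $\beta_0/\sigma_0$ be small: this guarantees the stepsize condition $\beta_k<\bar\sigma_k/(L_F+\rho_kL_f+2\sigma_k+\delta_k)^2$, which scales as $k^{-3t}$ and so needs $4t+s\ge3t$, true. It also fixes the constants in the $b$-versus-$a\alpha L^2$ comparison, where the exponents differ by only $t$; I would try that comparison first.
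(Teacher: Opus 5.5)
Your proposal is correct and follows the paper's proof essentially step for step. It uses the same weighted combination of Lemmas~\ref{udescentlemma}, \ref{xdescentlemma} and \ref{variancereduction} with nonincreasing weights, and the same absorption comparisons: $k^{6t+s}$ against $a_k/\alpha_k\sim k^{7t+s}$, the $k^{-9t-s}$ and $k^{-9t-2s}$ tracking terms against $b_k\beta_k\bar\sigma_k\sim k^{-8t-s}$, $a_k\alpha_k\sim k^{-11t-s}$ against $c_k\eta_k\sim k^{-10t-s}$, and $c_kL_k^2\sim k^{-3t}$ against $d_k$. It also converts $\E[\|x^{k+1}-x^k\|^2\mid\F_k]$ into $\|\mathcal G_k(x^k)\|^2$ exactly as the paper does (nonexpansiveness plus Lemmas~\ref{udescent} and \ref{variancereduction}), and collects the same $O(k^{-9t-2s})$ noise and summable $O(k^{-t-1})+O(k^{6t+s-2})$ drift remainders.

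Two points are only cosmetic. First, keeping just $\frac{a_k}{16\alpha_k}$ with your factor-$2$ split yields $\frac{a_k\alpha_k}{32}$, not $\frac{a_k\alpha_k}{24}$; you should retain more of the asymptotically $\frac{a_k}{4\alpha_k}$ descent to reach $\frac1{24}$. Second, the $b_k\beta_k\bar\sigma_k$ versus $a_k\alpha_kL_k^2$ comparison needs no tuning of $\beta_0/\sigma_0$, since it has a strict $k^{t}$ gap and only large $k$ is claimed.
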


Summing the descent inequality in Proposition~\ref{meritfunctionproposition} and substituting the parameter schedules yields the following bounds on the minimum expected squared projected-gradient residual and tracking error.
\begin{theorem}\label{thm:nonasymtoticconbergence}
	Let $\{(x^k,y^k,z^k)\}$ be the sequence generated by Sto-SiPBA with parameters
	selected as in \eqref{par}. Suppose that the function $\phi(x)$ is bounded below
	on $X$. Assume further that
	$0<2t<s<1,\; 11t+s<1$ and $9t+2s\neq 1$.
	If $\beta_0/\sigma_0$ is sufficiently small, then
	\begin{align*}
		&\min_{0\le k\le K}\E[\|\mathcal G_k(x^k)\|^2]
		=
		O\!\left(\frac{1}{K^{1-11t-s}}\right)
		+
		O\!\left(\frac{1}{K^{s-2t}}\right),\\
		&\min_{0\le k\le K}\E[\|u^k-u_k^*(x^k)\|^2]
		=
		O\!\left(\frac{1}{K^{1-8t-s}}\right)
		+
		O\!\left(\frac{1}{K^{s+t}}\right).
	\end{align*}
\end{theorem}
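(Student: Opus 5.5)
We will obtain the rates by telescoping the conditional descent inequality of Proposition~\ref{meritfunctionproposition}. First take total expectations in \eqref{meritfunctionproposition_eq} and sum from a fixed index $k_0$ (beyond which the proposition applies) to $K$. Using $V_{K+1}\ge0$, this gives
\[
\sum_{k=k_0}^{K}\Bigl(\frac{a_k\alpha_k}{24}\E\|\mathcal G_k(x^k)\|^2+\frac{b_k\beta_k\bar\sigma_k}{4}\E\|u^k-u_k^*(x^k)\|^2\Bigr)
\le \E[V_{k_0}]+C\sum_{k=k_0}^{K}k^{-9t-2s}+\sum_{k}\zeta_k .
\]
The term $\E[V_{k_0}]$ is a finite constant. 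Compactness of $X$ (Assumption~\ref{ass:Xcompact}) and the uniform saddle bounds of Lemma~\ref{lem:bounded-saddles-monotone} control finitely many iterates, and the finite-variance oracle bounds the estimator errors $e_x^{k}$ for $k<k_0$. The $\zeta_k$ sum is bounded by summability.

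Next we evaluate the noise sum. When $9t+2s>1$ it is $O(1)$. When $9t+2s<1$ it is $O(K^{1-9t-2s})$. The exclusion $9t+2s\neq1$ avoids the logarithmic borderline case. In either case the right-hand side is $O(1+K^{1-9t-2s})$.

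We then lower-bound the weights. From \eqref{par} and the merit coefficients, $a_k\alpha_k=\alpha_0(k+1)^{-11t-s}$. Since $\sigma_k,\delta_k$ both decay like $(k+1)^{-t}$, we have $\bar\sigma_k\asymp(k+1)^{-t}$, and therefore $b_k\beta_k\bar\sigma_k\asymp(k+1)^{-8t-s}$. Both weights are decreasing, so each weighted sum is at least the minimum over $k$ times the sum of the weights. For $11t+s<1$,
\[
\sum_{k\le K}(k+1)^{-11t-s}\gtrsim K^{1-11t-s},\qquad \sum_{k\le K}(k+1)^{-8t-s}\gtrsim K^{1-8t-s}.
\]
The finitely many indices $k<k_0$ only help the minimum.

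Dividing through gives
\[
\min_k\E\|\mathcal G_k\|^2=O\bigl(K^{-(1-11t-s)}\bigr)+O\bigl(K^{1-9t-2s}/K^{1-11t-s}\bigr)=O\bigl(K^{-(1-11t-s)}\bigr)+O\bigl(K^{-(s-2t)}\bigr),
\]
and the hypothesis $s>2t$ makes this second term decay. Similarly,
\[
\min_k\E\|u^k-u_k^*\|^2=O\bigl(K^{-(1-8t-s)}\bigr)+O\bigl(K^{-(s+t)}\bigr).
\]

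The main point needing care is that Proposition~\ref{meritfunctionproposition} holds only for large $k$, and that we need a uniform lower bound $\underline\phi$ in $V_k$. I would handle the first by starting the telescoping at $k_0$ and bounding $\E[V_{k_0}]$ as above. I would take the lower bound from Lemma~\ref{lem:det-concave-lower-bound}. The remainder is exponent bookkeeping.
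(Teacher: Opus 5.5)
Your proposal is correct and follows the paper's proof essentially step for step. You telescope the expected descent inequality of Proposition~\ref{meritfunctionproposition} from a large $k_0$, bound the noise sum by $O(1+K^{1-9t-2s})$ using $9t+2s\neq1$, and divide by the weight sums $\Theta(K^{1-11t-s})$ and $\Theta(K^{1-8t-s})$; your remark on why $\E[V_{k_0}]$ is finite is a detail the paper leaves implicit.
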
	
\begin{proof}
	By Proposition~\ref{meritfunctionproposition}, for all sufficiently large $k$,
	\[
	\E[V_{k+1}\mid\F_k]-V_k
	\le
	-\frac{a_k\alpha_k}{24}\|\mathcal G_k(x^k)\|^2
	-\frac14 b_k\beta_k\bar\sigma_k\|u^k-u_k^*(x^k)\|^2
	+Ck^{-9t-2s}+\zeta_k .
	\]
	Taking expectations and rearranging gives
	\[
	\frac{a_k\alpha_k}{24}\E[\|\mathcal G_k(x^k)\|^2]
	+
\frac{b_k\beta_k\bar\sigma_k}{4}
	\E[\|u^k-u_k^*(x^k)\|^2]
	\le
	\E[V_k]-\E[V_{k+1}]
	+Ck^{-9t-2s}+\zeta_k .
	\]
	Summing from a sufficiently large index $k_0\ge1$ to $K$ and using $V_{K+1}\ge0$ yields
	\[
	\begin{aligned}
		&\sum_{k=k_0}^{K}
		\frac{a_k\alpha_k}{24}\E[\|\mathcal G_k(x^k)\|^2]
		+
		\sum_{k=k_0}^{K}
		\frac14 b_k\beta_k\bar\sigma_k
		\E[\|u^k-u_k^*(x^k)\|^2] \\
		\le\;&
		\E[V_{k_0}]
		+
		C\sum_{k=k_0}^{K}k^{-9t-2s}
		+
		\sum_{k=k_0}^{K}\zeta_k .
	\end{aligned}
	\]
	Since $\sum_{k=0}^{\infty}\zeta_k<\infty$, we have
	$\sum_{k=k_0}^{K}\zeta_k=O(1).$
	Since $9t+2s\neq1$, we also have $\sum_{k=k_0}^{K}k^{-9t-2s}=O(K^{1-9t-2s})+O(1)$. Absorbing fixed factors into a constant $C>0$, independent of $K$, gives
	\[
	\sum_{k=k_0}^{K}
	a_k\alpha_k\E[\|\mathcal G_k(x^k)\|^2]
	+
	\sum_{k=k_0}^{K}
	b_k\beta_k\bar\sigma_k
	\E[\|u^k-u_k^*(x^k)\|^2]
	\le
	C\bigl(K^{1-9t-2s}+1\bigr).
	\]
	The chosen weights satisfy $a_k\alpha_k=\alpha_0(k+1)^{-11t-s}$.
	Since $11t+s<1$, we have
	$\sum_{k=k_0}^{K}a_k\alpha_k
	=
	\Theta(K^{1-11t-s}).$
	Therefore,
	\[
	\begin{aligned}
		\min_{k_0\le k\le K}\E[\|\mathcal G_k(x^k)\|^2]
		&\le
		\frac{
			\sum_{k=k_0}^{K}a_k\alpha_k\E[\|\mathcal G_k(x^k)\|^2]
		}{
			\sum_{k=k_0}^{K}a_k\alpha_k
		}\\
		&=
		O\!\left(\frac{1}{K^{1-11t-s}}\right)
		+
		O\!\left(\frac{1}{K^{s-2t}}\right).
	\end{aligned}
	\]
	Similarly, $b_k\beta_k\bar\sigma_k=\Theta(k^{-8t-s})$ as $k\to\infty$.
	Because $11t+s<1$ implies $8t+s<1$, we also have $\sum_{k=k_0}^{K}b_k\beta_k\bar\sigma_k
	=
	\Theta(K^{1-8t-s}).$
	Hence,
	\[
	\begin{aligned}
		\min_{k_0\le k\le K}
		\E[\|u^k-u_k^*(x^k)\|^2]
		&\le
		\frac{
			\sum_{k=k_0}^{K}
			b_k\beta_k\bar\sigma_k
			\E[\|u^k-u_k^*(x^k)\|^2]
		}{
			\sum_{k=k_0}^{K}b_k\beta_k\bar\sigma_k
		}\\
		&=
		O\!\left(\frac{1}{K^{1-8t-s}}\right)
		+
		O\!\left(\frac{1}{K^{s+t}}\right).
	\end{aligned}
	\]
	Finally, extending the minimization range to $0\le k\le K$ can only decrease each minimum, so
	\[
	\min_{0\le k\le K}\E[\|\mathcal G_k(x^k)\|^2]
	=
	O\!\left(\frac{1}{K^{1-11t-s}}\right)
	+
	O\!\left(\frac{1}{K^{s-2t}}\right),
	\]
	\[
	\min_{0\le k\le K}\E[\|u^k-u_k^*(x^k)\|^2]
	=
	O\!\left(\frac{1}{K^{1-8t-s}}\right)
	+
	O\!\left(\frac{1}{K^{s+t}}\right).
	\]
	This completes the proof.
\end{proof}

Theorem~\ref{thm:nonasymtoticconbergence} bounds the minimum expected value of each squared residual. The joint descent inequality in Proposition~\ref{meritfunctionproposition} further ensures that, almost surely, both residuals vanish along a common subsequence. This leads to the following  result of C-stationarity.

\begin{theorem}
	\label{thm:sto-as-local-Cstationarity}
	Let $\{(x^k,y^k,z^k)\}$ be generated by Sto-SiPBA under the
	assumptions of Theorem~\ref{thm:nonasymtoticconbergence}.
	Then, almost surely, there exists a sample-path-dependent
	subsequence $\{k_j\}$ such that
	\[
	\mathcal G_{k_j}(x^{k_j})\to0,
	\qquad
	u^{k_j}-u_{k_j}^*(x^{k_j})\to0.
	\]
	On this probability-one event, let $(\bar x,\bar y)$ be any
	accumulation pair of the sequence
	$\bigl\{
	(x^{k_j},y_{k_j}^*(x^{k_j}))
	\bigr\}.$
	Suppose, in addition, that $X$ and $Y$ admit the explicit
	inequality representations specified in
	Subsection~\ref{subsec:stNationarity-preliminaries}.
	Assume further that the local assumptions of
		Theorem~\ref{thm:Cstationarity} are satisfied:
		$\mathcal S$ is inner semicontinuous at $\bar x$;
		$f$ is twice continuously differentiable in a neighborhood
		of $(\bar x,\bar y)$;
		upper-level MFCQ holds at $\bar x$; and lower-level LICQ
		and Assumption~\ref{ass:lower-error-bound} hold at
		$(\bar x,\bar y)$. Then, along a further subsequence, relabeled $\{k_j\}$,
	\[
	x^{k_j}\to\bar x,
	\qquad
	(y^{k_j},z^{k_j})\to(\bar y,\bar y),
	\qquad
	\bar y\in\mathcal S_p(\bar x),
	\]
	and $\bar x$ is a C-stationary point of the original PBO.
\end{theorem}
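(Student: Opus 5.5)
The proof has two parts. First we extract an almost-sure common subsequence from the stochastic descent inequality. Then we repeat the deterministic argument of Theorem~\ref{thm:sipba-subsequential-C} pathwise.

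For the first part, start from Proposition~\ref{meritfunctionproposition}. For $k\ge k_0$ we have
\[
\E[V_{k+1}\mid\F_k]\le V_k-W_k+Ck^{-9t-2s}+\zeta_k,
\qquad
W_k:=\frac{a_k\alpha_k}{24}\|\mathcal G_k(x^k)\|^2+\frac{b_k\beta_k\bar\sigma_k}{4}\|u^k-u_k^*(x^k)\|^2 .
\]
The perturbation term is summable: $\zeta_k$ is summable by assumption, and $k^{-9t-2s}$ is summable because $s>2t$ together with $11t+s<1$ should give $9t+2s>1$. I expect this to be the main obstacle. If $9t+2s<1$ can occur, this term is not summable. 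In that case I would fall back on the pathwise argument directly: $\sum_k a_k\alpha_k=\infty$ combined with $\E\sum_k W_k/(\text{growth})$, or a weighted Robbins--Siegmund argument after dividing the recursion by $k^{1-9t-2s}$.

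Assuming summability, the Robbins--Siegmund supermartingale lemma applies since $V_k\ge0$. It gives, almost surely, that $V_k$ converges and $\sum_k W_k<\infty$. Note that $a_k\alpha_k=\Theta(k^{-11t-s})$ and $b_k\beta_k\bar\sigma_k=\Theta(k^{-8t-s})$, and both exponents are at most $1$, so neither weight is summable. Set $m_k:=\max\{\|\mathcal G_k(x^k)\|^2,\|u^k-u_k^*(x^k)\|^2\}$. Then
\[
\sum_k a_k\alpha_k\, m_k\le c\sum_k W_k<\infty ,
\]
using $a_k\alpha_k\lesssim b_k\beta_k\bar\sigma_k$. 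Combined with $\sum_k a_k\alpha_k=\infty$, this forces $\liminf_k m_k=0$ almost surely. This yields the sample-path-dependent subsequence $\{k_j\}$ along which both residuals vanish.

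The second part fixes a sample path in this probability-one event. Compactness of $X$ (Assumption~\ref{ass:Xcompact}) and the uniform bound $M_y$ on the saddle points guarantee that accumulation pairs $(\bar x,\bar y)$ of $\{(x^{k_j},y_{k_j}^*(x^{k_j}))\}$ exist; we pass to a further subsequence converging to such a pair. The parameters in \eqref{par} satisfy \eqref{paramcriterion} because $\sigma_k/\delta_k$ is constant. Lemma~\ref{multiplierconverge} then gives $z_{k_j}^*(x^{k_j})\to\bar y$ and $\bar y\in\mathcal S_p(\bar x)$, and the vanishing tracking error gives $(y^{k_j},z^{k_j})\to(\bar y,\bar y)$.

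Finally, we copy the proof of Theorem~\ref{thm:sipba-subsequential-C}. Set $\widehat x_j:=\operatorname{Proj}_X(x^{k_j}-\alpha_{k_j}\nabla\phi_{k_j}(x^{k_j}))$. The normal-cone residual at $\widehat x_j$ is bounded by $(1+\alpha_{k_j}L_{\phi_{k_j}})\|\mathcal G_{k_j}(x^{k_j})\|$. The saddle-point shift is bounded by $\frac{L_F+2\rho_{k_j}L_f}{\bar\sigma_{k_j}}\alpha_{k_j}\|\mathcal G_{k_j}(x^{k_j})\|$. Both vanish, since $\alpha_kL_{\phi_k}=O(k^{3t-9t-s})\to0$ and the second factor is $O(k^{2t-9t-s})\to0$; these rates are even more favorable than in the deterministic case. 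The error bound is uniform near $\bar x$, so it applies at $\widehat x_j$. Theorem~\ref{thm:Cstationarity}, applied to $\{\widehat x_j\}$ and $\{\phi_{k_j}\}$, then shows that $\bar x$ is C-stationary.
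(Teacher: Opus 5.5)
Your second half, the pathwise reduction to Theorem~\ref{thm:sipba-subsequential-C} with $\alpha_kL_{\phi_k}=O(k^{-6t-s})$ and $\alpha_k(L_F+2\rho_kL_f)/\bar\sigma_k=O(k^{-7t-s})$, matches the paper.

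The gap is in the first half. The hypotheses $0<2t<s$ and $11t+s<1$ do \emph{not} imply $9t+2s>1$. For $t=0.01$ and $s=0.1$ you get $9t+2s=0.29$. The theorem deliberately allows this regime: it excludes only $9t+2s=1$, and the $O(K^{-(s-2t)})$ term in Theorem~\ref{thm:nonasymtoticconbergence} is exactly the contribution of $\sum_{k\le K}k^{-9t-2s}=\Theta(K^{1-9t-2s})$. In that regime $\sum_k k^{-9t-2s}=\infty$, so Robbins--Siegmund does not apply.

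Your fallback, as stated, does not repair this. Dividing by $w_k=k^{1-9t-2s}$ turns the perturbation into $Ck^{-1}$, which is still not summable. The idea works only with a slightly faster weight, $w_k=k^{1-9t-2s+\epsilon}$ with $0<\epsilon\le s-2t$. Then, using $V_k\ge0$ and $w_{k+1}\ge w_k$, the quantity $\tilde V_k:=V_k/w_k$ satisfies $\E[\tilde V_{k+1}\mid\F_k]\le\tilde V_k-W_k/w_{k+1}+(Ck^{-9t-2s}+\zeta_k)/w_{k+1}$. The perturbation is now summable. Moreover, $\sum_k k^{-11t-s}/w_{k+1}\asymp\sum_k k^{-1+(s-2t-\epsilon)}=\infty$, which forces $\liminf_k m_k=0$ almost surely. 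The hypothesis $s>2t$ enters exactly here, through the gap between the two exponents, not through summability of $k^{-9t-2s}$.

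The paper avoids all of this with a simpler route. It takes total expectations in \eqref{meritfunctionproposition_eq} and sums to obtain $\sum_{k=k_0}^K k^{-11t-s}\E[R_k]\le C\bigl(1+\sum_{k=k_0}^K k^{-9t-2s}\bigr)$, where $R_k:=\|\mathcal G_k(x^k)\|^2+\|u^k-u_k^*(x^k)\|^2$. Dividing by $\sum_{k=k_0}^K k^{-11t-s}$, the right side tends to zero because $(9t+2s)-(11t+s)=s-2t>0$; this holds whether $9t+2s$ is below or above $1$. Hence $\liminf_k\E[R_k]=0$, and Fatou's lemma gives $\E[\liminf_k R_k]=0$, so $\liminf_k R_k=0$ almost surely. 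When $9t+2s>1$ your Robbins--Siegmund argument is valid and yields more, namely almost-sure convergence of $V_k$ and $\sum_k W_k<\infty$. However, it is the expectation-plus-Fatou argument, or your weighted version with the corrected exponent, that covers the full parameter range of the theorem.
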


\begin{proof}
	Define the joint residual
	$
	R_k
	:=
	\|\mathcal G_k(x^k)\|^2
	+
	\|u^k-u_k^*(x^k)\|^2.$
	Taking expectations in \eqref{meritfunctionproposition_eq}, summing from a sufficiently large index $k_0\ge1$ to $K$, and using $V_k\ge0$ and $\sum_k\zeta_k<\infty$, we obtain
	\begin{equation}
		\label{eq:sto-as-combined-bound}
		\sum_{k=k_0}^{K}
		k^{-11t-s}\E[R_k]
		\le
		C\left(
		1+\sum_{k=k_0}^{K}k^{-9t-2s}
		\right).
	\end{equation}
	Here we used $a_k\alpha_k=\Theta(k^{-11t-s})$ and $b_k\beta_k\bar\sigma_k=\Theta(k^{-8t-s})$ under \eqref{par}, together with $k^{-8t-s}\ge k^{-11t-s}$ for $k\ge1$, to give both residuals the common weight $k^{-11t-s}$.Here, $\Theta(\cdot)$ denotes two-sided bounds up to positive constant factors as $k\to\infty$.
	
	Set
	$
	q:=11t+s,
	$ and $
	r:=9t+2s.
	$
	The assumptions give
	$
	q<1,
	\;
	r-q=s-2t>0.$
	Hence
	\[
	\frac{
		1+\sum_{k=k_0}^{K}k^{-r}
	}{
		\sum_{k=k_0}^{K}k^{-q}
	}
	\to0.
	\]
	Dividing \eqref{eq:sto-as-combined-bound} by
	$\sum_{k=k_0}^{K}k^{-q}$ shows that the corresponding
	weighted average of $\{\E[R_k]\}$ converges to zero. Therefore,
	\[
	\liminf_{k\to\infty}\E[R_k]=0.
	\]
	Since $R_k\ge0$, Fatou's lemma yields
	\[
	0
	\le
	\E\left[\liminf_{k\to\infty}R_k\right]
	\le
	\liminf_{k\to\infty}\E[R_k]
	=0.
	\]
	Consequently,
	\[
	\liminf_{k\to\infty}R_k=0
	\qquad\text{almost surely}.
	\]
	
	Fix a sample path in this probability-one event. There exists a
	sample-path-dependent subsequence $\{k_j\}$ such that
	$R_{k_j}\to0$, and hence
	\[
	\mathcal G_{k_j}(x^{k_j})\to0,
	\qquad
	u^{k_j}-u_{k_j}^*(x^{k_j})\to0.
	\]
	Fix an accumulation pair $(\bar x,\bar y)$ as in the theorem and pass to a further subsequence converging to it. The parameter subsequences still satisfy \eqref{paramcriterion}. Moreover, \eqref{par} gives $\alpha_kL_{\phi_k}=O(k^{-6t-s})\to0$ and $\alpha_k(L_F+2\rho_kL_f)/\bar\sigma_k=O(k^{-7t-s})\to0$. Assumption~\ref{ass:lower-error-bound} is uniform in the upper-level parameter,
		so it also holds along the projected sequence used in the deterministic
		argument. Its neighborhood and constant may be chosen for the fixed
		sample path and accumulation pair. Thus, the remaining arguments in the
		proof of Theorem~\ref{thm:sipba-subsequential-C} apply on this sample path. They yield $u^{k_j}\to(\bar y,\bar y)$ with $\bar y\in\mathcal S_p(\bar x)$ and, by Theorem~\ref{thm:Cstationarity}, C-stationarity of $\bar x$.
	
\end{proof}

\section{Empirical Studies}\label{sec:numericalexperiments}
The experiments are designed to test four distinct aspects of the proposed framework. The deterministic synthetic example evaluates the accuracy and scalability of SiPBA against existing PBO solvers. The stochastic synthetic example isolates the effect of sampling noise on Sto-SiPBA. The spam classification experiment examines whether pessimistic modeling improves cross-domain robustness. The smart predict-then-optimize experiment tests Sto-SiPBA on a stochastic decision-focused inventory planning problem with a genuinely set-valued downstream solution map. All computational experiments were performed on a server
provisioned with dual Intel Xeon Gold 5218R CPUs (a total of 40 cores/80 threads, with 2.1-4.0
GHz) and an NVIDIA H100 GPU. 
The code is available on \url{https://github.com/qichaosustech/single-loop-pbo}.

\subsection{Synthetic Example}
\textbf{Deterministic case.}
We first consider a deterministic test problem for which both the pessimistic lower-level response and the global upper-level solution can be computed explicitly:
\begin{align}\label{constrainedtoy}
	&\min_{x\in X}\max_{y \in \mathbb{R}^n}\|x-e\|^2-\sqrt{n}\|y-e\|,
	\quad\text{s.t.}\;
	y\in\underset{y'\in Y}{\arg\min}
	\frac{1}{n}\bigl|\langle y',e\rangle-\|x\|^2\bigr|^2 .
\end{align}
Here $X=Y=[-0.9,0.9]^n$, and $e\in\mathbb{R}^n$ denotes the all-ones vector. For every fixed $x\in X$, the lower-level solution set is nonempty, and the pessimistic response is given by $y^*(x)=\frac{\|x\|^2}{n}e .$
The corresponding upper-level solution is $x^*=\frac{e}{2}$. Hence we report the normalized upper-level and lower-level errors
$\frac{\|x^k-x^*\|^2}{n},
\;
\frac{\|y^k-y^*(x^k)\|^2}{n},$
respectively.

\begin{figure}[htbp]
	\centering
	\begin{subfigure}[t]{0.48\linewidth}
		\centering
		\includegraphics[width=\linewidth]{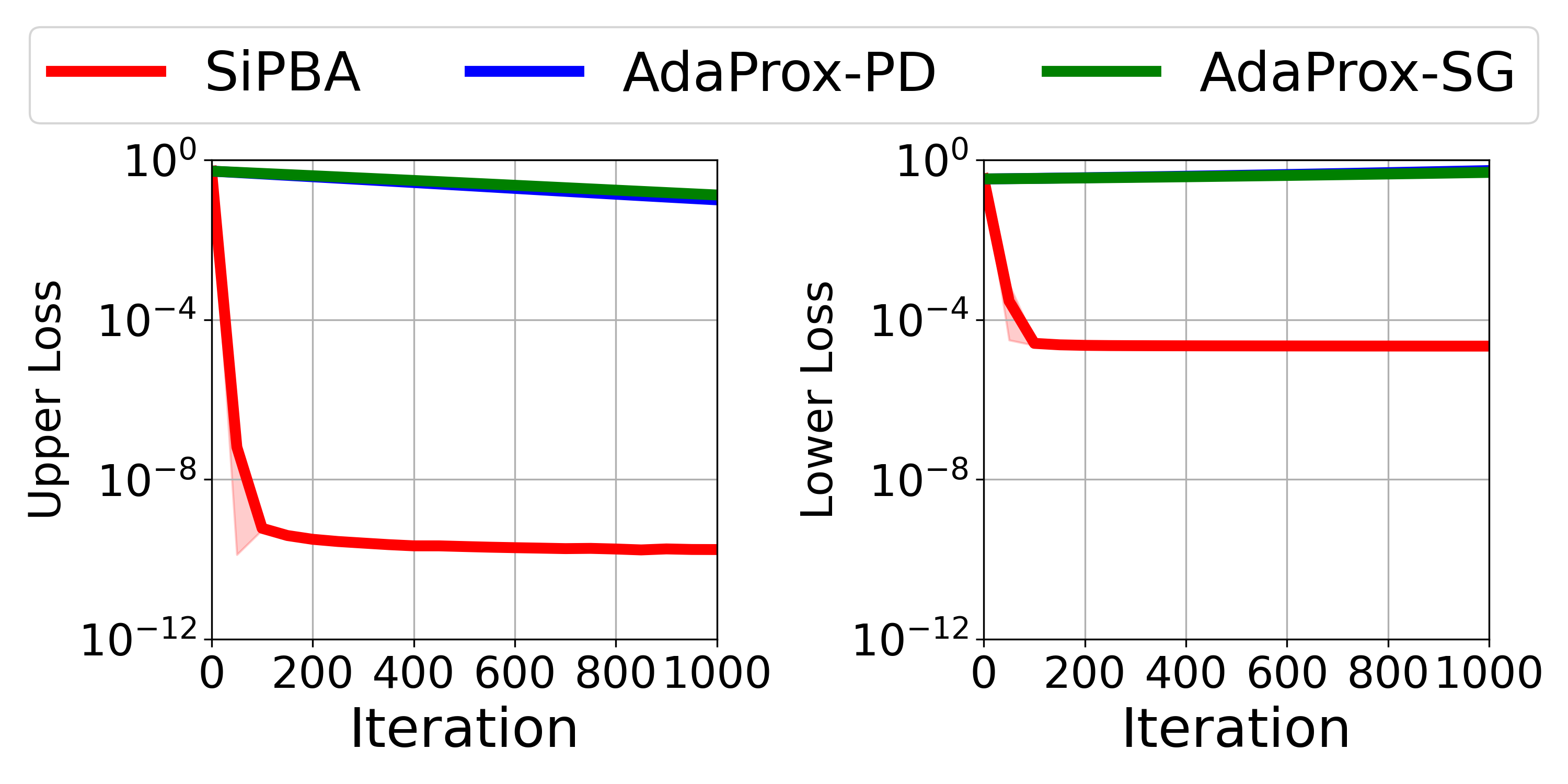}
		\caption{Deterministic case.}
		\label{fig:deterministic_compare}
	\end{subfigure}
	\hfill
	\begin{subfigure}[t]{0.48\linewidth}
		\centering
		\includegraphics[width=\linewidth]{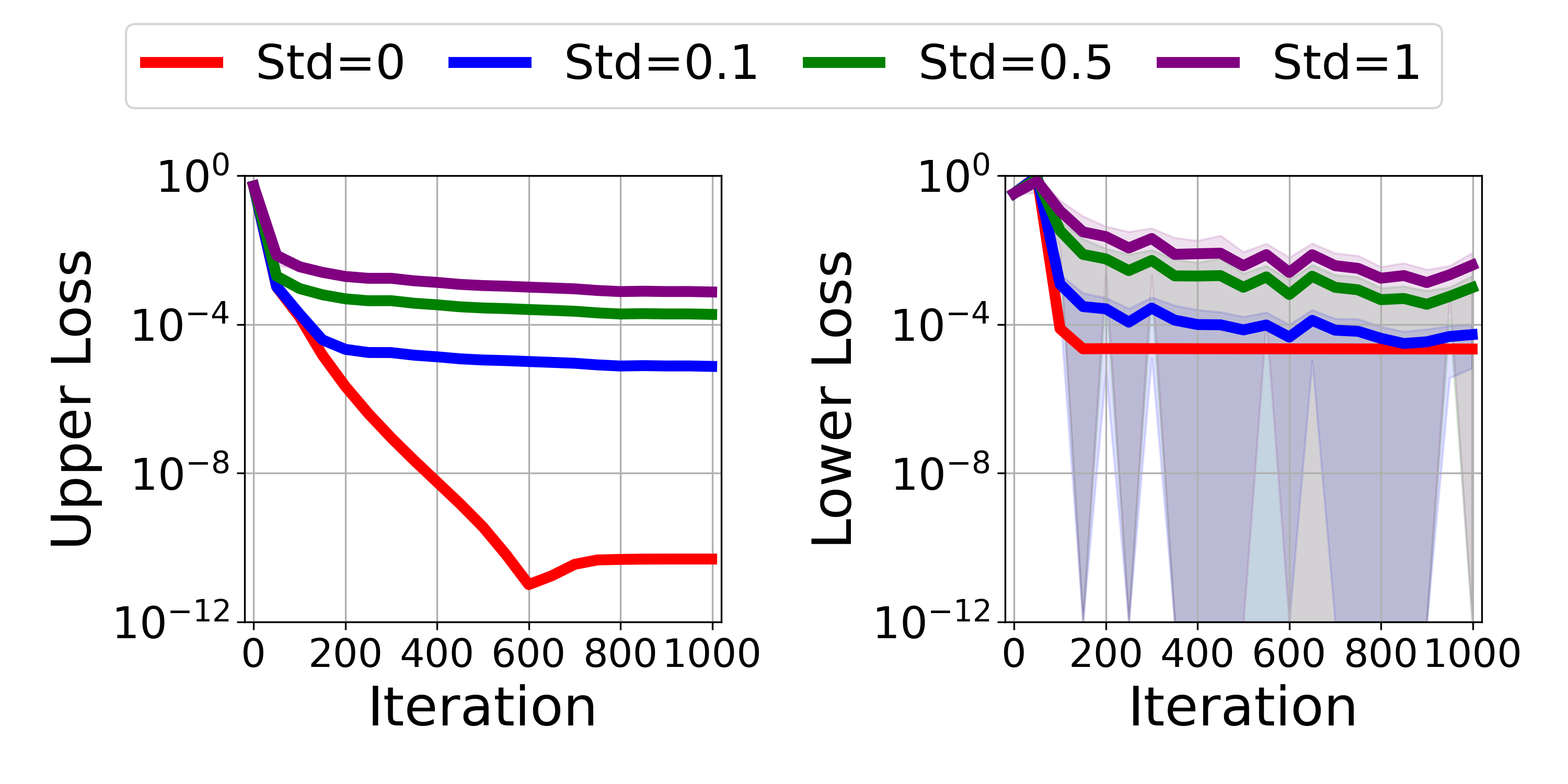}
		\caption{Stochastic case.}
		\label{fig:stochastic_convergence}
	\end{subfigure}
	\caption{Convergence behavior on the synthetic examples. 
		(a) Upper- and lower-level losses of the gradient-based methods in the deterministic setting. 
		(b) Upper- and lower-level losses of Sto-SiPBA under different noise levels in the stochastic setting.}
	\label{fig:synthetic_convergence}
\end{figure}

We compare SiPBA with two gradient-based bilevel methods, AdaProx-PD and AdaProx-SG \cite{guanadaprox}, and with two MPCC-based Scholtes relaxation methods, Compact Scholtes and Detailed Scholtes \cite{benchouk2025scholtes}, denoted by Scholtes-C and Scholtes-D. SiPBA, AdaProx-PD, and AdaProx-SG are run for 1000 iterations, while Scholtes-C and Scholtes-D are run for 10 outer iterations. Unless otherwise stated, the parameters of SiPBA are chosen as in \eqref{deter:par}: $s=0.08$, $t=0.01$, $\alpha_0=0.1$, $\beta_0=0.01$, $\rho_0=10$, and $\sigma=\delta=10^{-4}$.

\begin{table}[htbp]
	\vspace{-1em}
	\caption{Performance comparison of the SiPBA, AdaProx-PD, AdaProx-SG, Scholtes-C, and Scholtes-D with $n=100$.}
	\label{tab:dimension100}
	\footnotesize
	\setlength{\tabcolsep}{3pt}
	\begin{tabular*}{\textwidth}{@{\extracolsep{\fill}}lccccc}
		\hline
		& SiPBA & AdaProx-PD & AdaProx-SG & Scholtes-C & Scholtes-D \\
		\hline
		Upper loss (Min.) & \textbf{1.627e-10} & 8.229e-02 & 1.069e-01 & 1.846e-05 & 1.818e-05 \\
		Upper loss (Max.) & \textbf{1.767e-10} & 1.109e-01 & 1.440e-01 & 1.951e-05 & 1.820e-05 \\
		Lower loss (Min.) & 2.175e-05 & 5.061e-01 & 4.526e-01 & 1.632e-10 & \textbf{6.817e-11} \\
		Lower loss (Max.) & 2.178e-05 & 6.173e-01 & 5.550e-01 & 3.031e-09 & \textbf{1.212e-10} \\
		Ave Time (s) & \textbf{1.613} & 8.241 & 7.534 & 26.081 & 107.723 \\
		\hline
	\end{tabular*}
\end{table}
\begin{table}[htbp]
	\caption{Performance comparison of the SiPBA, AdaProx-PD, AdaProx-SG, Scholtes-C, and Scholtes-D with $n=1000$.}
	\label{tab:dimension1000}
	\footnotesize
	\setlength{\tabcolsep}{3pt}
	\begin{tabular*}{\textwidth}{@{\extracolsep{\fill}}lccccc}
		\hline
		& SiPBA & AdaProx-PD & AdaProx-SG & Scholtes-C & Scholtes-D \\
		\hline
		Upper loss (Min.) & \textbf{1.229e-10} & 1.103e-01 & 1.277e-01 & 5.012e-01 & 5.012e-01 \\
		Upper loss (Max.) & \textbf{1.996e-10} & 1.186e-01 & 1.380e-01 & 5.480e-01 & 5.480e-01 \\
		Lower loss (Min.) & \textbf{2.173e-05} & 5.041e-01 & 4.764e-01 & 3.315e-01 & 3.315e-01 \\
		Lower loss (Max.) & \textbf{2.184e-05} & 5.745e-01 & 5.426e-01 & 3.769e-01 & 3.769e-01 \\
		Ave Time (s) & \textbf{1.907} & 9.773 & 8.901 & 400.235 & 400.239 \\
		\hline
	\end{tabular*}
\end{table}
Tables~\ref{tab:dimension100} and \ref{tab:dimension1000} report the minimum and maximum errors and the average running time for $n=100$ and $n=1000$, respectively. Figure~\ref{fig:deterministic_compare} shows the corresponding convergence histories for the gradient-based methods. In both dimensions, SiPBA gives the smallest upper-level error among the tested methods. The Scholtes relaxations produce very small lower-level errors for $n=100$, but for $n=1000$ both variants reach the prescribed 400-second time limit and return substantially larger upper- and lower-level errors. These observations suggest that, on this controlled example, the smooth approximation used by SiPBA can recover accurate upper-level iterates without solving each auxiliary subproblem to high accuracy.

\textbf{Stochastic case.}
We next consider a stochastic counterpart of \eqref{constrainedtoy}:
\begin{equation} \label{toy_problem}
	\begin{aligned}
		\min_{x \in X} \max_{y \in \mathbb{R}^n} & \quad \mathbb{E}_{w} \left[ \|x+w-e\|^2 - \sqrt{n}\|y-e\| \right] \\
		\text{s.t. } & \quad y \in \arg\min_{y^\prime \in Y} \mathbb{E}_{v} \left[\frac{1}{n}\left\| \langle y^\prime+v, e \rangle - \|x\|^2 \right\|^2\right],
	\end{aligned}
\end{equation}
where $X = Y = [-0.9,0.9]^n$. The entries of $w$ and $v$ are independent Gaussian random variables with
$w_i,v_i\overset{\mathrm{i.i.d.}}{\sim}\mathcal{N}(0,\Delta^2)$. Since the stochastic terms add constants to the expected objectives, the pessimistic response and the upper-level solution remain $y^*(x)=\frac{\|x\|^2}{n}e,\;x^*=\frac{e}{2}.$
We therefore use the same normalized errors as in the deterministic case.

Unless otherwise stated, the parameters are chosen as in \eqref{par}: $s=0.5$, $t=0.01$, $\alpha_0=0.1$, $\beta_0=0.1$, $\rho_0=10$, and $\sigma=\delta=10^{-4}$. Figure~\ref{fig:stochastic_convergence} reports the convergence behavior of Sto-SiPBA for several values of the noise level $\Delta$. The curves show that the upper-level error decreases consistently over the tested noise levels, while the lower-level error remains stable up to the stochastic fluctuations induced by the sampling procedure.

Finally, we examine the sensitivity of Sto-SiPBA to selected algorithmic parameters. The noise level is fixed at $\Delta=0.1$. For each parameter configuration, we record the number of iterations and the running time required for the aggregate error
$\max\{\frac{\|x^k-x^*\|^2}{n},\frac{\|y^k-y^*(x^k)\|^2}{n}\}$ to fall below $10^{-4}$. Table~\ref{ablationanalysis} reports the mean and standard deviation over repeated runs. Within the tested range, the choices of $\sigma$, $\delta$, and $\eta$ have limited influence on the stopping time, whereas $\alpha$, $\beta$, and $\rho$ have a more visible effect. This is consistent with the role of these parameters in controlling the primal update and penalty scaling.

\begin{table}[htbp]
	\caption{Results for Sto-SiPBA with different hyperparameters.}
	\label{ablationanalysis}
	\footnotesize
	\setlength{\tabcolsep}{3pt}
	\begin{tabular*}{\textwidth}{@{\extracolsep{\fill}}cccccccc@{}}
		\toprule
		\textbf{$\alpha_0$} & \textbf{$\beta_0$} & \textbf{$\rho_0$} & \textbf{$\sigma_0$} & \textbf{$\delta_0$} & \textbf{$\eta_0$} & \textbf{Time (s)} & \textbf{Iter.} \\
		\midrule
		0.1 & 0.1 & 10 & $10^{-4}$ & $10^{-4}$ & 1 & $0.72\,(\pm 0.10)$ & $113.3\,(\pm 2.8)$ \\
		0.05 & 0.1 & 10 & $10^{-4}$ & $10^{-4}$ & 1 & $1.85\,(\pm 0.08)$ & $295.8\,(\pm 7.3)$ \\
		0.2 & 0.1 & 10 & $10^{-4}$ & $10^{-4}$ & 1 & $0.62\,(\pm 0.05)$ & $102.9\,(\pm 3.4)$ \\
		0.1 & 0.05 & 10 & $10^{-4}$ & $10^{-4}$ & 1 & $0.41\,(\pm 0.03)$ & $67.4\,(\pm 3.2)$ \\
		0.1 & 0.02 & 10 & $10^{-4}$ & $10^{-4}$ & 1 & $0.33\,(\pm 0.03)$ & $52.6\,(\pm 4.7)$ \\
		0.1 & 0.1 & 5 & $10^{-4}$ & $10^{-4}$ & 1 & $0.43\,(\pm 0.02)$ & $67.7\,(\pm 1.8)$ \\
		0.1 & 0.1 & 20 & $10^{-4}$ & $10^{-4}$ & 1 & $2.19\,(\pm 0.06)$ & $356.7\,(\pm 5.3)$ \\
		0.1 & 0.1 & 10 & $10^{-5}$ & $10^{-4}$ & 1 & $0.70\,(\pm 0.05)$ & $113.3\,(\pm 2.8)$ \\
		0.1 & 0.1 & 10 & $10^{-3}$ & $10^{-4}$ & 1 & $0.68\,(\pm 0.05)$ & $113.2\,(\pm 2.7)$ \\
		0.1 & 0.1 & 10 & $10^{-4}$ & $10^{-5}$ & 1 & $0.65\,(\pm 0.03)$ & $113.3\,(\pm 2.8)$ \\
		0.1 & 0.1 & 10 & $10^{-4}$ & $10^{-3}$ & 1 & $0.69\,(\pm 0.03)$ & $113.2\,(\pm 2.7)$ \\
		0.1 & 0.1 & 10 & $10^{-4}$ & $10^{-4}$ & 0.8 & $0.68\,(\pm 0.03)$ & $114.2\,(\pm 2.2)$ \\
		0.1 & 0.1 & 10 & $10^{-4}$ & $10^{-4}$ & 0.5 & $0.67\,(\pm 0.04)$ & $114.4\,(\pm 3.5)$ \\
		\bottomrule
	\end{tabular*}
\end{table}

\subsection{Spam Classification}
Spam distributions can vary substantially across corpora, users, and time; consequently, a classifier trained on one corpus may transfer poorly to another. We therefore use cross-corpus spam classification to examine whether pessimistic modeling reduces sensitivity to source-corpus-specific patterns. Specifically, we consider the following feature-map generalization of the spam classification model in \cite{bruckner2011stackelberg}:
\begin{equation}\label{spammodel}
	\begin{aligned}
		\min_{w\in\IR^n}\;\max_{\hat{x}\in\mathcal{X}}\quad
		& l(w,\hat{x},y)+\lambda_1\operatorname{Reg}(w)\\
		\mathrm{s.t.}\quad
		& \hat{x}\in\underset{x^\prime\in\mathcal{X}}{\arg\min}\;
		\left\{
		l^\prime(w,x^\prime)
		+\lambda_2\|\varphi(x^\prime)-\varphi(x)\|^2
		\right\}.
	\end{aligned}
\end{equation}
Here, $w$ denotes the classifier parameters, while $x$ and $y$ denote the vectorized messages and their associated labels, respectively.  The functions $l$ and $l^\prime$ are the classifier and sender losses, $\operatorname{Reg}(w)$ is the classifier regularizer, and $\varphi$ is the feature map used to measure the cost of modifying a message. For a fixed classifier, the lower-level problem models the sender's optimal adaptive response. Specifically, the sender modifies each
spam message so that the classifier is more likely to label it as
legitimate email, while keeping the modified message close to the
original one in the feature space. When this response is nonunique, the maximization in \eqref{spammodel} selects a lower-level solution that is least favorable to the classifier. The outer minimization then trains the classifier against this pessimistic response.

We compare three training configurations.  The first configuration applies SiPBA to \eqref{spammodel}, with $\varphi$ given by the projection onto the leading $100$ principal components of the source-corpus training features. Thus, the modification cost measures changes in the dominant feature subspace. The second configuration uses the sequential quadratic programming (SQP) procedure of \cite{bruckner2011stackelberg} with the identity map $\varphi(x)=x$, as in the original formulation. The third configuration is a standard single-level classifier trained on the source corpus without the adaptive-response problem and implemented using scikit-learn \cite{pedregosa2011scikit}. For the PBO methods, we use either hinge loss or cross-entropy (CE) for both $l$ and $l^\prime$; the corresponding single-level baseline uses the same classification loss. We denote the resulting methods by SiPBA-Hinge/CE, SQP-Hinge/CE, and Single-Hinge/CE, respectively.

\begin{table}[tbp]
	\caption{Mean classification accuracy (Acc.) and F1 score, in percent, over ten random seeds. Each block identifies the source corpus used for training. The final column reports the arithmetic mean over the four test corpora, comprising one in-corpus and three cross-corpus test sets.}
	\label{mutual_training_results}
	\footnotesize
	\setlength{\tabcolsep}{1.5pt}
	\begin{tabular*}{\textwidth}{@{\extracolsep{\fill}}llccccc@{}}
		\toprule
		\multirow{2}{*}{\textbf{Train Set}}
		& \multirow{2}{*}{\textbf{Method}}
		& \multicolumn{4}{c}{\textbf{Test corpus (Acc./F1)}}
		& \multirow{2}{*}{\textbf{Ave (Acc./F1)}} \\
		\cmidrule(lr){3-6}
		& & \textbf{TREC06} & \textbf{TREC07} & \textbf{EnronSpam} & \textbf{LingSpam} & \\
		\midrule
		\multirow{6}{*}{TREC06}
		& SiPBA-Hinge & 96.4/94.7 & 87.3/81.0 & 70.6/70.2 & 87.6/92.7 & 85.5/84.7 \\
		& SiPBA-CE & 94.5/92.5 & 79.5/73.0 & 70.9/71.8 & 87.6/92.8 & 83.1/82.5 \\
		& SQP-Hinge & 93.1/90.0 & 89.2/83.2 & 69.0/66.7 & 89.0/93.4 & 85.1/83.3 \\
		& SQP-CE & 93.6/91.3 & 78.9/72.4 & 70.7/71.4 & 87.2/92.6 & 82.6/81.9 \\
		& Single-Hinge & 95.4/93.1 & 89.3/82.8 & 63.9/46.5 & 75.5/82.5 & 81.0/76.2 \\
		& Single-CE & 93.8/90.4 & 88.5/79.6 & 56.9/24.1 & 55.1/62.6 & 73.6/64.2 \\
		\midrule
		\multirow{6}{*}{TREC07}
		& SiPBA-Hinge & 68.9/16.8 & 93.7/89.7 & 57.0/33.7 & 50.5/57.6 & 67.5/49.5 \\
		& SiPBA-CE & 71.7/56.9 & 98.1/97.2 & 68.3/68.8 & 64.6/75.5 & 75.7/74.6 \\
		& SQP-Hinge & 68.9/17.2 & 95.3/92.5 & 55.0/21.0 & 29.9/28.1 & 62.3/39.7 \\
		& SQP-CE & 71.3/56.9 & 97.7/96.6 & 68.4/69.7 & 70.1/80.5 & 76.9/75.9 \\
		& Single-Hinge & 65.4/1.9 & 97.7/96.4 & 50.9/0.2 & 16.6/0.3 & 57.7/24.7 \\
		& Single-CE & 66.4/3.4 & 95.7/93.0 & 51.0/0.8 & 17.3/1.8 & 57.6/24.8 \\
		\midrule
		\multirow{6}{*}{EnronSpam}
		& SiPBA-Hinge & 75.8/61.8 & 72.1/28.0 & 95.9/95.8 & 59.6/67.4 & 75.9/63.3 \\
		& SiPBA-CE & 76.3/62.8 & 74.0/34.4 & 95.2/95.0 & 64.0/72.0 & 77.4/66.1 \\
		& SQP-Hinge & 77.5/61.7 & 70.5/22.8 & 96.1/96.0 & 52.3/59.3 & 74.1/60.0 \\
		& SQP-CE & 76.0/62.6 & 73.4/32.9 & 94.9/94.8 & 63.0/71.0 & 76.8/65.3 \\
		& Single-Hinge & 76.8/56.0 & 69.3/15.0 & 95.8/95.6 & 47.2/52.3 & 72.3/54.7 \\
		& Single-CE & 76.4/55.4 & 70.0/19.2 & 95.6/95.3 & 43.1/46.9 & 71.3/54.2 \\
		\midrule
		\multirow{6}{*}{LingSpam}
		& SiPBA-Hinge & 63.4/59.1 & 66.2/51.2 & 71.1/65.4 & 99.4/99.6 & 75.0/68.8 \\
		& SiPBA-CE & 71.8/48.5 & 69.0/27.6 & 59.1/34.3 & 91.8/94.8 & 72.9/51.3 \\
		& SQP-Hinge & 42.5/53.8 & 45.3/52.0 & 72.5/65.8 & 98.2/99.0 & 64.6/67.7 \\
		& SQP-CE & 72.0/49.5 & 68.9/26.2 & 58.9/33.9 & 91.9/94.8 & 72.9/51.1 \\
		& Single-Hinge & 37.2/51.9 & 38.6/50.6 & 56.7/69.0 & 95.7/97.5 & 57.1/67.3 \\
		& Single-CE & 34.5/51.0 & 34.0/50.1 & 51.3/66.8 & 91.4/95.1 & 52.8/65.8 \\
		\bottomrule
	\end{tabular*}
\end{table}

We use four standard spam corpora: TREC 2006 (TREC06) \cite{ounis2006overview}, TREC 2007 (TREC07) \cite{macdonald2007overview}, EnronSpam \cite{metsis2006spam}, and LingSpam \cite{androutsopoulos2000evaluation}. For each random seed, one corpus is designated as the source corpus, from which $500$ emails are sampled uniformly without replacement. The same sample is used to train all methods. A \texttt{TfidfVectorizer} with \texttt{max\_features=3000} is fitted only on these $500$ training emails, so the resulting TF--IDF feature dimension $n$ is at most $3000$. The fitted vectorizer is subsequently used to transform the held-out source-corpus emails and all emails from the other three corpora. The former constitute the in-corpus test set, whereas the latter constitute the cross-corpus test sets. For the SiPBA configuration, the principal-component map is also fitted only on the source-corpus training sample. We repeat the experiment for ten random seeds and report the mean accuracy and F1 score. Under this representation, each lower-level message variable is $x\in\IR^n$ with $n\le 3000$, and the $500$ sampled training messages are stacked into $X\in\IR^{500\times n}$ in the implementation, whereas the upper-level classifier variable is $w\in\IR^n$.

Table~\ref{mutual_training_results} shows that, for every source corpus and both loss functions, each PBO configuration attains a higher four-corpus mean accuracy than its corresponding single-level baseline. The same comparison holds for the mean F1 score in seven of the eight source--loss settings; the exception is cross-entropy training on LingSpam. Relative to the SQP configuration, SiPBA achieves a higher mean accuracy in six settings, the same accuracy in one setting, and a lower accuracy in one setting. It also achieves a higher mean F1 score in seven of the eight settings. These results indicate that the tested PBO configurations generally perform favorably across heterogeneous evaluation corpora, although the improvement is not uniform across loss functions, source corpora, and evaluation metrics.

\subsection{Smart Predict-then-Optimize}

Smart Predict-then-Optimize (SPO) is a recent decision-focused learning paradigm, where the predictive model is trained according to the quality of the
downstream decisions induced by its predictions~\cite{elmachtoub2022smart}. Given contextual features, the predictor estimates the unknown parameters of an optimization problem, and these estimates are then used to compute a decision. The learning objective is therefore not prediction accuracy alone, but rather the out-of-sample performance of the decisions obtained under the true problem parameters.

When the downstream optimization problem admits multiple optimal solutions, the
decision induced by a prediction is set-valued. The standard SPO loss is then
not well defined unless a selection rule is prescribed to resolve this ambiguity. To this end, \cite{elmachtoub2022smart} introduced the unambiguous SPO loss, which evaluates the worst realized loss over all downstream optimal decisions. This construction naturally leads to a pessimistic bilevel formulation.

Directly optimizing the resulting pessimistic formulation, however, remains difficult. The few existing approaches exploit special structures of the downstream problem, such as linear optimization or integer linear regression \cite{bucareydecision,jimenez2025pessimistic}. In our experiment, we consider a continuous nonlinear SPO problem with a non-singleton lower-level solution set and examine the computational performance of our proposed Sto-SiPBA method.

We study a tolerance-insensitive inventory problem, motivated by operational
costs with an insensitive region \cite{chen2024learning}. For a production
quantity $p$ and demand $d$, the operational loss is
\[
F_{\tau}(p,d)
=
c_o [p-d-\tau]_+
+ q_o [p-d-\tau]_+^2
+
c_u [d-p-\tau]_+
+ q_u [d-p-\tau]_+^2,
\]
where $[x]_+=\max\{x,0\}$ and $\tau\ge 0$ denotes the tolerance width.
Throughout the experiment, we set $c_o=1$, $q_o=0.1$, $c_u=5$, and
$q_u=0.5$. The parameter $\tau$ specifies a no-penalty interval around the
demand; hence, increasing $\tau$ enlarges the set of downstream optimal decisions.

Let $d_{\theta}(\xi)$ be the demand prediction produced from contextual features
$\xi$. Given this prediction, the lower-level problem and its optimal solution set can be represented as 
\[
P_{\tau}^{*}(\theta)
=
\arg\min_{p\ge 0} \frac{1}{|D_{\text{train}}|}\sum_{i\in\mathcal D_{\mathrm{train}}} F_{\tau}\bigl(p_i,d_{\theta}(\xi_i)\bigr).
\]
Then, the pessimistic SPO loss
evaluates the worst realized performance over all decisions in
$P_{\tau}^{*}(\theta)$, and is given by
\[
\ell_{\mathrm{pess}}(\theta):= \max_{p\in P_{\tau}^{*}(\theta)} \frac{1}{|\mathcal D_{\mathrm{train}}|}
\sum_{i\in\mathcal D_{\mathrm{train}}} F_{\tau}\bigl(p_i,d_i\bigr).
\]
This results in the  following pessimistic bilevel optimization with a stochastic structure:
\begin{equation}\label{pessimisticspo}
	\begin{aligned}
		\min_{\theta}\max_{p\in P_{\tau}^{*}(\theta)}\;
		\frac{1}{|\mathcal D_{\mathrm{train}}|}
		\sum_{i\in\mathcal D_{\mathrm{train}}} F_{\tau}\bigl(p_i,d_i\bigr).
	\end{aligned}
\end{equation}

We compare Sto-SiPBA with TTSA \cite{hong2023two}, LANCER
\cite{zharmagambetov2023landscape}, SurCo \cite{ferber2023surco}, and an
MSE-trained predictor. Here, TTSA is used to train the optimistic variant of \eqref{pessimisticspo}, while LANCER and SurCo serve as task-aware learning
baselines. All methods use the same
embedding-MLP architecture: categorical covariates are embedded and concatenated
with standardized numerical covariates before being passed to a multilayer
perceptron. We use the hourly Bike Sharing Dataset \cite{fanaee2014event},
which contains 17,379 hourly observations from 2011 to 2012. The target is the
total number of rentals in each hour, and the inputs consist of calendar, time,
and weather covariates. The variables \texttt{casual} and
\texttt{registered} are excluded to avoid information leakage. The data are
sorted chronologically and split into training, validation, and test sets using
a 70\%/15\%/15\% day-forward split, resulting in 12,187 training samples, 2,592
validation samples, and 2,600 test samples. Under the network architecture and training split used in our
experiments, the upper-level variable is the predictor parameter vector
$\theta\in\mathbb{R}^{45{,}445}$, whereas the lower-level variable is
$p=(p_i)_{i\in\mathcal D_{\mathrm{train}}}
\in\mathbb{R}_{+}^{12{,}187}$, with one decision associated with each
training observation.

For each method, hyperparameters are selected at $\tau=10$ according
to the validation pessimistic loss. The selected hyperparameters are then fixed
and used for all values $\tau\in\{0,5,10,15,20\}$. For each value of $\tau$, we
repeat the experiment over ten random seeds. We report pessimistic,
optimistic, nominal, and MSE losses. The optimistic loss uses the best
downstream decision in $P_{\tau}^{*}(\theta)$, while the nominal loss
uses the center rule $p_{\mathrm{nom}}(\xi_i)=d_{\theta}(\xi_i)$. The MSE loss
is defined as
\[
\ell_{\mathrm{mse}}(\theta)
=
\frac{1}{|\mathcal D_{\mathrm{test}}|}
\sum_{i\in \mathcal D_{\mathrm{test}}}
\bigl(d_i-d_{\theta}(\xi_i)\bigr)^2 .
\]
\begin{figure}[t]
	\centering
	\includegraphics[width=\linewidth]{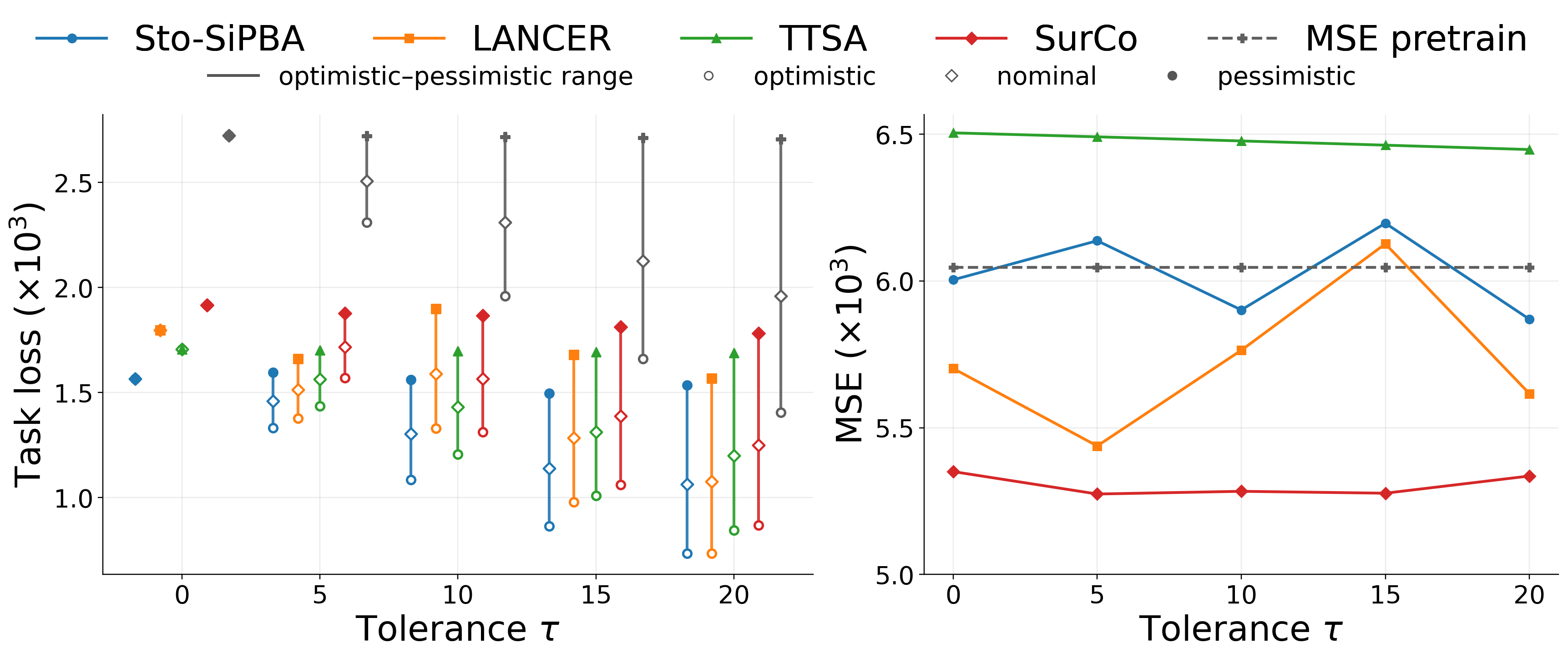}
	\caption{
		Mean task loss and MSE with varying tolerance $\tau$ over ten runs. 
	}\label{SPO}
\end{figure}

\begin{table}[t]
	\caption{Mean pessimistic task loss and MSE with varying tolerance $\tau$ over ten runs.}
	\label{tab:spo-repeat}
	\scriptsize
	\setlength{\tabcolsep}{1pt}
	\begin{tabular*}{\textwidth}{@{\extracolsep{\fill}}clccccc@{}}
		\toprule
		\textbf{$\tau$} & \textbf{Metric} & \textbf{Sto-SiPBA} & \textbf{LANCER} & \textbf{TTSA} & \textbf{SurCo} & \textbf{MSE pretrain} \\
		\midrule
		\multirow{2}{*}{$0$}
		& Task loss ($10^3$) & $\mathbf{1.566\pm 0.305}$ & $1.797\pm 0.631$ & $1.706\pm 0.352$ & $1.916\pm 0.741$ & $2.722\pm 0.916$ \\
		& MSE ($10^3$) & $6.004\pm 1.421$ & $5.702\pm 1.905$ & $6.505\pm 1.217$ & $\mathbf{5.351\pm 0.763}$ & $6.047\pm 1.333$ \\
		\midrule
		\multirow{2}{*}{$5$}
		& Task loss ($10^3$) & $\mathbf{1.597\pm 0.340}$ & $1.662\pm 0.461$ & $1.702\pm 0.352$ & $1.877\pm 0.729$ & $2.721\pm 0.916$ \\
		& MSE ($10^3$) & $6.137\pm 1.651$ & $5.438\pm 1.770$ & $6.491\pm 1.212$ & $\mathbf{5.274\pm 0.763}$ & $6.047\pm 1.333$ \\
		\midrule
		\multirow{2}{*}{$10$}
		& Task loss ($10^3$) & $\mathbf{1.561\pm 0.309}$ & $1.898\pm 0.853$ & $1.698\pm 0.352$ & $1.867\pm 0.741$ & $2.717\pm 0.914$ \\
		& MSE ($10^3$) & $5.902\pm 1.527$ & $5.764\pm 1.432$ & $6.477\pm 1.206$ & $\mathbf{5.284\pm 0.799}$ & $6.047\pm 1.333$ \\
		\midrule
		\multirow{2}{*}{$15$}
		& Task loss ($10^3$) & $\mathbf{1.495\pm 0.279}$ & $1.680\pm 0.460$ & $1.693\pm 0.352$ & $1.812\pm 0.701$ & $2.712\pm 0.912$ \\
		& MSE ($10^3$) & $6.197\pm 1.636$ & $6.127\pm 2.722$ & $6.463\pm 1.200$ & $\mathbf{5.277\pm 0.684}$ & $6.047\pm 1.333$ \\
		\midrule
		\multirow{2}{*}{$20$}
		& Task loss ($10^3$) & $\mathbf{1.535\pm 0.275}$ & $1.567\pm 0.439$ & $1.688\pm 0.352$ & $1.781\pm 0.673$ & $2.706\pm 0.909$ \\
		& MSE ($10^3$) & $5.870\pm 1.586$ & $5.616\pm 1.358$ & $6.448\pm 1.194$ & $\mathbf{5.336\pm 0.584}$ & $6.047\pm 1.333$ \\
		\bottomrule
	\end{tabular*}
\end{table}
The results are summarized in Figure \ref{SPO} and Table
\ref{tab:spo-repeat}. The MSE baseline denotes a predictor trained only by the
supervised MSE loss. Across all tested tolerance levels, Sto-SiPBA attains the
smallest test pessimistic loss. Averaged over the five tolerance levels, its
pessimistic task loss is $1550.7$, compared with $1697.5$ for TTSA, $1720.9$
for LANCER, $1850.6$ for SurCo, and $2715.5$ for the MSE-trained predictor.
This clear improvement is consistent with the pessimistic formulation: the training
criterion penalizes predictions whose induced optimal-decision set contains
solutions with poor realized performance. Hence the learned predictor is
encouraged to be stable with respect to the ambiguity introduced by the
downstream optimization problem, rather than to be merely pointwise accurate in
demand space.

SurCo has the lowest average MSE across the tolerance sweep, and achieves the lowest MSE for all five tolerance levels, while Sto-SiPBA remains comparable to the MSE-trained predictor on
average. One possible explanation is that SPO-type objectives introduce a task-dependent
bias in training: prediction errors are assessed through their effect on the
downstream loss, so errors in directions that have limited influence on the
induced decision may be penalized less. This may partly explain why such methods can exhibit competitive out-of-sample MSE performance, even though they are not
trained to minimize the MSE directly.
Finally, Sto-SiPBA has the smallest standard deviation of the pessimistic task
loss in all five tolerance settings, whereas its MSE variability is
relatively larger. This pattern is consistent with the objective optimized by
the pessimistic model, which primarily controls task performance under the
worst admissible downstream decision.
\section{Conclusions}

This paper studies pessimistic bilevel optimization and proposes a novel smooth approximation for the pessimistic value function through penalization, regularization, and a coupling mechanism between lower-level variables. 

The resulting differentiable approximation is theoretically justified by convergence of global minimizers and C-stationarity consistency under the stated constraint qualifications and uniform parametric lower-level solution-set error bound condition.

With this approximation, we developed SiPBA and Sto-SiPBA, which, to the best of our knowledge, are the first fully first-order, single-loop algorithms for deterministic and stochastic PBO, respectively.
Both methods avoid second-order computations and inner-loop subproblem solves, while admitting convergence guarantees to C-stationary points of PBO.

Through a systematic empirical study of synthetic problem instances, the spam classification problem, and a Smart Predict-then-Optimize model, we observe that our solution methods generally outperform existing approaches, with clear advantages in efficiency, effectiveness, and robustness. Building on the proposed smooth approximation framework, a promising direction for future research is to incorporate advanced optimization techniques to develop more efficient algorithms with sharper convergence rates and complexity guarantees.

\begin{appendices}

	\section{Proof for Section \ref{sec:smoothapproximation}}
	\label{appendix:smoothapproximation}
	
	\subsection{Proof for Theorem \ref{differentiable}}
	We first present a lemma showing that the coercivity of $-F(x,\cdot)$ implies a uniform upper-level boundedness property of $F(x,\cdot)$ over compact sets of $x$. 
	\begin{lemma}\label{coerciveuniform}
		Suppose that $F:\IR^n\times\IR^m\to\IR$ is continuous and that $B\subset \IR^n$ is
		compact. If for each $x\in B$, the function $-F(x,\cdot)$ is coercive and
		$F(x,\cdot)$ is concave, then for any $M>0$, there exists a constant $M_y>0$ such that
		\[
		F(x,y)<-M,
		\qquad \forall x\in B,\ \forall y\in Y \text{ with } \|y\|>M_y.
		\]
	\end{lemma}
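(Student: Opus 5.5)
We argue by contradiction, combining compactness of $B$ with the fact that a concave function that decreases along a ray keeps decreasing. Fix $M>0$ and suppose no such $M_y$ exists. Then there are $x_k\in B$ and $y_k\in Y$ with $\|y_k\|\to\infty$ and $F(x_k,y_k)\ge -M$. By compactness of $B$, after passing to a subsequence we may assume $x_k\to\bar x\in B$ and $d_k:=(y_k-y_0)/\|y_k-y_0\|\to \bar d$ with $\|\bar d\|=1$. Here $y_0\in Y$ is a fixed reference point.

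The next step reduces unboundedness to a one-dimensional statement along the ray $y_0+t\bar d$. Since $Y$ is closed and convex, the segment from $y_0$ to $y_k$ lies in $Y$, and so $y_0+\bar d\,t\in Y$ for every $t\ge0$ (a recession direction). Coercivity of $-F(\bar x,\cdot)$ on $Y$ gives $T>0$ with
\[
F(\bar x,y_0+T\bar d)<-M-1-|F(\bar x,y_0)|-\text{(margin)} .
\]
More usefully, choose $T$ with $F(\bar x,y_0+T\bar d)<\min\{-M-1,\,F(\bar x,y_0)-1\}$, which is possible because $F(\bar x,y_0+t\bar d)\to-\infty$ as $t\to\infty$.

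By continuity of $F$ at $(\bar x,y_0+T\bar d)$ and at $(\bar x,y_0)$, for large $k$ we get $F(x_k,y_0+Td_k)<\min\{-M-1,F(x_k,y_0)-\tfrac12\}$. For large $k$ we also have $\|y_k-y_0\|>T$, so $y_0+Td_k$ lies on the segment $[y_0,y_k]$. Write $t_k:=\|y_k-y_0\|$ and $\lambda:=T/t_k\in(0,1)$. Concavity of $F(x_k,\cdot)$ then gives
\[
F(x_k,y_0+Td_k)\ge(1-\lambda)F(x_k,y_0)+\lambda F(x_k,y_k).
\]
Rearranging, we obtain
\[
F(x_k,y_k)\le F(x_k,y_0)+\frac{t_k}{T}\bigl(F(x_k,y_0+Td_k)-F(x_k,y_0)\bigr)\le F(x_k,y_0)-\frac{t_k}{2T}.
\]
Since $F(x_k,y_0)$ is bounded by continuity on the compact set $B\times\{y_0\}$ and $t_k\to\infty$, this forces $F(x_k,y_k)\to-\infty$. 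That contradicts $F(x_k,y_k)\ge -M$.

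The main subtlety is the choice of $T$. The point $T$ must be fixed using the limit $\bar x$, not $x_k$, so that continuity in $x$ can be invoked uniformly. We also need a strict slope gap, $F(\bar x,y_0+T\bar d)<F(\bar x,y_0)-1$, so that the concavity extrapolation yields linear decay. Coercivity alone at each $x$ is not uniform, which is why we route it through the single limit point $\bar x$.
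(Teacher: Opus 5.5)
Your proof is correct and uses the same ingredients as the paper's: contradiction, compactness to get $x_k\to\bar x$ and $d_k\to\bar d$, the recession ray $y_0+t\bar d\in Y$, concavity on the segment $[y_0,y_k]$, and coercivity of $-F(\bar x,\cdot)$ along that ray. The only difference is the order of operations. The paper passes to the limit in the interpolation inequality to obtain $F(\bar x,y^0+td)\ge F(\bar x,y^0)$ for all $t>0$, which contradicts coercivity, whereas you first fix a drop point $T$ at $\bar x$ and then extrapolate at $x_k$ to force $F(x_k,y_k)\to-\infty$; the two routes are equivalent.
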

	\begin{proof}
		We prove this by providing a  contradiction. Suppose that the conclusion fails for some
		$M>0$. Then there exist sequences $x_k\in B$ and $y_k\in Y$ such that
		\[
		\|y_k\|\to\infty,
		\qquad
		F(x_k,y_k)\ge -M .
		\]
		Since $B$ is compact, after passing to a subsequence we may assume that
		$x_k\to \bar x\in B$.
		
		Fix an arbitrary point $y^0\in Y$. Set
		$r_k:=\|y_k-y^0\|,\;
		d_k:=\frac{y_k-y^0}{r_k}.$
		Then $r_k\to\infty$. Passing to a further subsequence if necessary, we may
		assume that $d_k\to d$ with $\|d\|=1$.
		
		Let $t>0$ be fixed. For all sufficiently large $k$, $t/r_k\in(0,1)$, and hence
		\[
		y_k(t):=y^0+\frac{t}{r_k}(y_k-y^0)
		=\left(1-\frac{t}{r_k}\right)y^0+\frac{t}{r_k}y_k
		\in Y
		\]
		by convexity of $Y$. Since $Y$ is closed and $y_k(t)\to y^0+td$, we also have
		$y^0+td\in Y$.
		
		Using the concavity of $F(x_k,\cdot)$ on $Y$, we obtain
		\[
		\begin{aligned}
			F(x_k,y_k(t))
			&\ge
			\left(1-\frac{t}{r_k}\right)F(x_k,y^0)
			-\frac{tM}{r_k}.
		\end{aligned}
		\]
		Letting $k\to\infty$ and using the continuity of $F$ gives
		\[
		F(\bar x,y^0+td)\ge F(\bar x,y^0),\qquad \forall t>0 .
		\]
		However, $\|y^0+td\|\to\infty$ as $t\to\infty$, and $y^0+td\in Y$ for every
		$t>0$. This contradicts the coercivity of $-F(\bar x,\cdot)$ on $Y$, i.e.,
		$F(\bar x,y)\to-\infty$ as $\|y\|\to\infty$ along $y\in Y$.
		Thus, the original assumption must be false, which completes the proof.
	\end{proof}
	
	\begin{proof}[Proof for Theorem \ref{differentiable}]
		For brevity, write
		$\psi:=\psi_{\rho,\sigma,\delta}$.
		By Assumptions~\ref{assum1}--\ref{assum2}, $\psi$ is continuously differentiable.
		Moreover, for each fixed $(x,z)$, the map $y\to \psi(x,y,z)$ is
		$\delta$-strongly concave on $Y$, while for each fixed $(x,y)$, the map
		$z\to \psi(x,y,z)$ is $\sigma$-strongly convex on $Y$.
		
		Fix $(\bar x,\bar z)\in X\times Y$ and compact neighborhoods $B_X$ and $B_Z$ of
		$\bar x$ and $\bar z$. By Assumption~\ref{assum2}, there exists a bounded set $D$
		such that $S(x)\cap D\neq\emptyset$ for all $x\in B_X$; hence $f(x,\cdot)$ is
		uniformly bounded below on $B_X$. In addition, Lemma~\ref{coerciveuniform}
		implies that $-F(x,\cdot)$ is uniformly coercive on $B_X$. Since $z$ ranges in the
		bounded set $B_Z$, the linear term $\sigma\langle y,z\rangle$ is dominated by the
		quadratic term $\frac{\delta}{2}\|y\|^2$. Therefore the inf-compactness condition holds for $-\psi(x,\cdot,z)$. Combined with the $\delta$-strong convexity of
		$-\psi(x,\cdot,z)$, this shows that
		\[
		\widehat y(x,z):=\arg\max_{y\in Y}\psi(x,y,z)
		\]
		is well defined and unique.
		
		Define
		$
		h(x,z):=\max_{y\in Y}\psi(x,y,z)=\psi(x,\widehat y(x,z),z).
		$
		By \cite[Theorem~4.13 and Remark~4.14]{bonnans2013perturbation}, $h$ is
		differentiable and
		\[
		\nabla h(x,z)
		=
		\bigl(
		\nabla_x\psi(x,\widehat y(x,z),z),
		\nabla_z\psi(x,\widehat y(x,z),z)
		\bigr).
		\]
		By the maximum theorem and the uniqueness of the maximizer, $\widehat y$ is
		continuous; hence $h$ is continuously differentiable on $X\times Y$.
		
		Next, since $z\mapsto \psi(x,y,z)$ is $\sigma$-strongly convex for every fixed
		$(x,y)$, the pointwise maximum $h(x,\cdot)$ is also $\sigma$-strongly convex.
		Moreover, choosing $x^\prime\in S(x)\cap D$ on compact neighborhoods of $x$, we have
		\[
		h(x,z)\ge \psi(x,x^\prime,z)
		\ge
		F(x,x^\prime)+\frac{\sigma}{2}\|z-x^\prime\|^2-\frac{\sigma+\delta}{2}\|x^\prime\|^2,
		\]
		so the inf-compactness condition holds for $h(x,\cdot)$. Therefore
		\[
		z_{\rho,\sigma,\delta}^*(x):=\arg\min_{z\in Y}h(x,z)
		\]
		exists and is unique.
		
		Since
		$
		\phi_{\rho,\sigma,\delta}(x)=\min_{z\in Y}h(x,z),
		$
		applying \cite[Theorem~4.13 and Remark~4.14]{bonnans2013perturbation} once more
		yields
		\[
		\nabla\phi_{\rho,\sigma,\delta}(x)
		=
		\nabla_x h\bigl(x,z_{\rho,\sigma,\delta}^*(x)\bigr)
		=
		\nabla_x\psi\bigl(x,\widehat y(x,z_{\rho,\sigma,\delta}^*(x)),
		z_{\rho,\sigma,\delta}^*(x)\bigr).
		\]
		Because $\psi(x,\cdot,\cdot)$ is strongly concave-convex, the saddle point of
		\[
		\min_{z\in Y}\max_{y\in Y}\psi(x,y,z)
		\]
		is unique. Hence
		$
		\widehat y(x,z_{\rho,\sigma,\delta}^*(x))=y_{\rho,\sigma,\delta}^*(x).
		$
		Finally, by the definition of $\psi$,
		\[
		\nabla_x\psi(x,y,z)
		=
		\nabla_xF(x,y)-\rho\nabla_xf(x,y)+\rho\nabla_xf(x,z),
		\]
		since the regularization terms are independent of $x$.\\
		 Substituting
		$(y,z)=\bigl(y_{\rho,\sigma,\delta}^*(x),z_{\rho,\sigma,\delta}^*(x)\bigr)$ gives
		\eqref{gradient_phi}.
	\end{proof}
	\subsection{Proof for Lemma \ref{lem:limsup}}
	Before presenting the proof for Lemma \ref{lem:limsup}, we first establish some auxiliary results. First, we establish a uniform boundedness property for the saddle point components $y_k^*(x)$ and $z_k^*(x)$.
	
	\begin{lemma}\label{saddlepointboundedness}
		Let $B \subset X$ be a compact set. Suppose that the parameters satisfy criterion \eqref{paramcriterion}. Then, there exists a constant $M_B > 0$ such that for all $k$ and all $x \in B$,
		\[
		\|y^*_{\rho_k,\sigma_k,\delta_k}(x)\| \le M_B, \quad \text{and} \quad \|z^*_{\rho_k,\sigma_k,\delta_k}(x)\| \le M_B.
		\]
	\end{lemma}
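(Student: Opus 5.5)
The plan is to compare the saddle point with a bounded lower-level solution, and to avoid any compactness in $k$ by using only the two saddle-point inequalities together with $f(x,\widehat y)\le f(x,\cdot)$. Fix the compact set $B\subset X$. By Assumption~\ref{assum2} there is a bounded set $D\subseteq Y$ with $\mathcal S(x)\cap D\neq\emptyset$ for every $x\in B$. For each $x\in B$ pick $\widehat y(x)\in\mathcal S(x)\cap D$, and let $M_D:=\sup_{y\in D}\|y\|$. Write $y^*=y_k^*(x)$, $z^*=z_k^*(x)$ and $\psi_k=\psi_{\rho_k,\sigma_k,\delta_k}$. The saddle property is
\[
\psi_k(x,y,z^*)\le\psi_k(x,y^*,z^*)\le\psi_k(x,y^*,z)\qquad\text{for all } y,z\in Y.
\]
Taking $y=z=\widehat y(x)$ gives $\psi_k(x,\widehat y,z^*)\le\psi_k(x,y^*,\widehat y)$.

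\textbf{Step 1 (bound on $y^*$).} Expand both sides of this inequality. On the left, the penalty term equals $\rho_k\bigl(f(x,z^*)-f(x,\widehat y)\bigr)\ge0$. On the right, the penalty term equals $-\rho_k\bigl(f(x,y^*)-f(x,\widehat y)\bigr)\le0$. Both inequalities hold because $\widehat y$ minimizes $f(x,\cdot)$ over $Y$. Dropping these two terms with the favourable sign yields
\[
F(x,\widehat y)+\tfrac{\sigma_k}{2}\|z^*\|^2-\sigma_kM_D\|z^*\|+\tfrac{\delta_k}{2}\|y^*\|^2-\sigma_kM_D\|y^*\|
\le F(x,y^*)+\tfrac{\sigma_k+\delta_k}{2}M_D^2 .
\]
Next, complete squares:
\[
\tfrac{\sigma_k}{2}\|z^*\|^2-\sigma_kM_D\|z^*\|\ge-\tfrac{\sigma_k}{2}M_D^2,
\qquad
\tfrac{\delta_k}{2}\|y^*\|^2-\sigma_kM_D\|y^*\|\ge-\tfrac{\sigma_k^2}{2\delta_k}M_D^2 .
\]
By \eqref{paramcriterion}, $\sigma_k,\delta_k\to0$ and $\sigma_k/\delta_k$ is bounded, so $\sigma_k^2/\delta_k=\sigma_k\cdot(\sigma_k/\delta_k)$ is bounded uniformly in $k$. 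Moreover, $F$ is continuous on the compact set $B\times\overline D$, so $F(x,\widehat y(x))$ is bounded below uniformly. Together these give a constant $M>0$, independent of $k$ and of $x\in B$, with $F(x,y^*)\ge -M$. Lemma~\ref{coerciveuniform} then provides $M_y$ such that $F(x,y)<-M$ whenever $y\in Y$ and $\|y\|>M_y$. Hence $\|y^*\|\le M_y$.

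\textbf{Step 2 (bound on $z^*$).} For fixed $y^*$, $z^*$ minimizes $\rho_kf(x,z)+\frac{\sigma_k}{2}\|z-y^*\|^2$ over $Y$; this is the $z$-part of $\psi_k$ up to terms independent of $z$. Comparing with the candidate $z=\widehat y$ and using $f(x,\widehat y)\le f(x,z^*)$ gives
\[
\|z^*-y^*\|\le\|\widehat y-y^*\|\le M_D+M_y,
\]
and therefore $\|z^*\|\le M_D+2M_y$. Setting $M_B:=M_D+2M_y$ covers both components.

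\textbf{Main obstacle.} The delicate point is Step 1, where the bilinear coupling term $-\sigma_k\langle y,z\rangle$ must be controlled uniformly in $k$ even though the strong-concavity modulus $\delta_k$ tends to $0$. The condition $\limsup\sigma_k/\delta_k<\infty$ is exactly what keeps the cross term $\sigma_kM_D\|y^*\|$ dominated by $\frac{\delta_k}{2}\|y^*\|^2$ up to a bounded constant. The other ingredient is that the two $\rho_k$-penalty terms have the right sign, so the unbounded factor $\rho_k$ never enters the estimate.
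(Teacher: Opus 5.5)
Your proof is correct and follows the paper's argument. It uses the same comparison $\psi_k(x,\widehat y,z^*)\le\psi_k(x,y^*,\widehat y)$ with both $\rho_k$-penalty terms discarded by sign, the condition $\limsup_k\sigma_k/\delta_k<\infty$ to control the coupling term, Lemma~\ref{coerciveuniform} for the $y$-bound, and the identical comparison with $z=\widehat y$ in the $z$-subproblem for $z^*$. The only difference is presentational: you complete squares to get an explicit uniform lower bound on $F(x,y_k^*(x))$ directly, whereas the paper argues by contradiction along a sequence with $\|y_k^*(x_k)\|\to\infty$, dividing by $\delta_k$ so the right-hand side grows like $\|y_k^*(x_k)\|^2$.
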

	\begin{proof}
		We first prove the uniform boundedness of $\{y_k^*(x)\}$ on $B$.
		Suppose, to the contrary, that there exist $x_k\in B$ such that
		$\|y_k^*(x_k)\|\to\infty$.
		By Assumption~\ref{assum2}, we may choose
		$\hat y_k\in \mathcal S(x_k)\cap D$, where $D\subset Y$ is bounded; hence
		$\{\hat y_k\}$ is bounded.
		
		Using the saddle-point property of $(y_k^*(x_k),z_k^*(x_k))$ and the fact that
		$\hat y_k\in \mathcal S(x_k)$, we obtain after rearrangement
		\[
		\frac{2}{\delta_k}\Big(F(x_k,y_k^*(x_k))-F(x_k,\hat y_k)\Big)
		\ge
		\|y_k^*(x_k)\|^2
		+2\frac{\sigma_k}{\delta_k}\langle y_k^*(x_k),\hat y_k\rangle
		-\Bigl(2\frac{\sigma_k}{\delta_k}+1\Bigr)\|\hat y_k\|^2.
		\]
		Since $\limsup_{k\to\infty}\sigma_k/\delta_k<\infty$ and $\{\hat y_k\}$ is bounded, the
		right-hand side is bounded from below by
		\[
		\|y_k^*(x_k)\|^2-C\|y_k^*(x_k)\|-C
		\]
		for all sufficiently large $k$, and therefore tends to $+\infty$.
		
		On the other hand, $\{F(x_k,\hat y_k)\}$ is bounded because $B$ is compact and $D$
		is bounded. Moreover, Lemma~\ref{coerciveuniform} gives
		$F(x_k,y_k^*(x_k))\to -\infty$, since $-F(x,\cdot)$ is uniformly coercive on $B$
		and $\|y_k^*(x_k)\|\to\infty$. Hence the left-hand side tends to $-\infty$, a
		contradiction. Therefore, $\{y_k^*(x)\}$ is uniformly bounded on $B$.
		
		We next prove the boundedness of $\{z_k^*(x)\}$. Fix any $x\in B$ and choose
		$\hat y\in \mathcal S(x)\cap D$. Since $z_k^*(x)$ minimizes
		$z\mapsto \psi_k(x,y_k^*(x),z)$ over $Y$, comparison with $z=\hat y$ yields
		\[
		\rho_k\bigl(f(x,z_k^*(x))-f(x,\hat y)\bigr)
		+\frac{\sigma_k}{2}
		\Bigl(
		\|z_k^*(x)-y_k^*(x)\|^2-\|\hat y-y_k^*(x)\|^2
		\Bigr)\le 0.
		\]
		Because $\hat y\in \mathcal S(x)$, the first term is nonnegative, so
		\[
		\|z_k^*(x)-y_k^*(x)\|\le \|\hat y-y_k^*(x)\|.
		\]
		Consequently,
		\[
		\|z_k^*(x)\|
		\le \|y_k^*(x)\|+\|z_k^*(x)-y_k^*(x)\|
		\le 2\|y_k^*(x)\|+\|\hat y\|.
		\]
		The right-hand side is uniformly bounded on $B$, since $\{y_k^*(x)\}$ is uniformly
		bounded and $D$ is bounded. Hence $\{z_k^*(x)\}$ is uniformly bounded on $B$ as well.
	\end{proof}

	Next, we establish Lemma \ref{valueofy}, which shows that the accumulation points of $\{y^*_{k}(x_k)\}$ belong to the solution set $\mathcal{S}(\bar{x})$ when $x_k \rightarrow \bar{x}$. 
	As the proofs follow the same arguments as those of   \citep[Lemma B.3]{qichao2025single}, we omit it here.
	\begin{lemma}\label{valueofy}
		Suppose that the parameters satisfy criterion \eqref{paramcriterion}. Then, for any sequence $\{x_k\} \subset X$ such that $x_k \rightarrow \bar{x} \in X$ as $k \rightarrow \infty$, we have
		\begin{equation}
			\underset{k\to\infty}{\lim\sup}\;f(x_k,y_{k}^*(x_k))\le  \min_{y \in Y} f(\bar{x}, y).
		\end{equation}
		Consequently, for any accumulation point $\bar{y}$ of sequence $\{y_{k}^*(x_k)\}$, we have $\bar{y} \in \mathcal{S}(\bar{x})$.
	\end{lemma}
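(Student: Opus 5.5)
The plan is to compare the saddle value $\phi_k(x_k)=\psi_k(x_k,y_k^*(x_k),z_k^*(x_k))$ with the value obtained at a lower-level solution. Throughout, write $y_k:=y_k^*(x_k)$ and $z_k:=z_k^*(x_k)$.

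First, Lemma~\ref{saddlepointboundedness} applies to the compact set $\{x_k\}\cup\{\bar x\}$. It shows that $\{y_k\}$ and $\{z_k\}$ are bounded, and hence the values $F(x_k,y_k)$ are bounded as well.

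Second, I take any $\hat y\in\mathcal S(\bar x)$ and choose $z=\hat y$ in the outer minimization. Since $(y_k,z_k)$ is a saddle point,
\[
\phi_k(x_k)=\max_{y\in Y}\min_{z\in Y}\psi_k\le\psi_k(x_k,y_k,z_k)\le\psi_k(x_k,y_k,\hat y).
\]
Expanding the last expression gives
\[
F(x_k,y_k)-\rho_k\bigl(f(x_k,y_k)-f(x_k,\hat y)\bigr)+\tfrac{\sigma_k}{2}\|\hat y\|^2-\sigma_k\langle y_k,\hat y\rangle-\tfrac{\delta_k}{2}\|y_k\|^2 .
\]
The three regularization terms are $O(\sigma_k+\delta_k)\to0$ because $y_k$ is bounded. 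It remains to obtain a lower bound on $\phi_k(x_k)$. For that I plug a point into the inner maximization: for fixed $y\in Y$,
\[
\phi_k(x_k)\ge\min_z\psi_k(x_k,y,z)=F(x_k,y)-\rho_k\bigl(f(x_k,y)-\min_z[f(x_k,z)+\tfrac{\sigma_k}{2\rho_k}\|z-y\|^2]\bigr)-\tfrac{\sigma_k+\delta_k}{2}\|y\|^2 .
\]
The bracketed minimum can be written as $f(x_k,y)-\text{(Moreau gap)}$, which is at most $f(x_k,y)$. Using it with $y=y_k$ only gives $\phi_k(x_k)\ge F(x_k,y_k)-\tfrac{\sigma_k+\delta_k}{2}\|y_k\|^2$, and this is the wrong direction for what is needed.

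The better route is to use the optimality of $z_k$ itself. We have
\[
\phi_k(x_k)=F(x_k,y_k)-\rho_k\bigl(f(x_k,y_k)-f(x_k,z_k)\bigr)-\tfrac{\sigma_k}{2}\|y_k\|^2+\tfrac{\sigma_k}{2}\|z_k-y_k\|^2-\tfrac{\delta_k}{2}\|y_k\|^2 .
\]
Combining this with a lower bound $\liminf\phi_k(x_k)\ge c$ would yield $\rho_k\bigl(f(x_k,y_k)-f(x_k,z_k)\bigr)\le C$, i.e. $f(x_k,y_k)\le f(x_k,z_k)+C/\rho_k$. The required lower bound $\phi_k(x_k)\ge c$ comes from the lower-bound argument used for Lemma~\ref{lem_liminf}, or more directly from plugging $y=\hat y_k\in\mathcal S(x_k)\cap D$ (bounded by Assumption~\ref{assum2}): then $f(x_k,\hat y_k)-\min_z[\cdots]\le0$, so $\phi_k(x_k)\ge F(x_k,\hat y_k)-O(\sigma_k+\delta_k)$, which is bounded below.

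Third, I bound $f(x_k,z_k)$ from above. Optimality of $z_k$ for $f(x_k,\cdot)+\frac{\sigma_k}{2\rho_k}\|\cdot-y_k\|^2$, compared against $z=\hat y$, gives
\[
f(x_k,z_k)\le f(x_k,\hat y)+\tfrac{\sigma_k}{2\rho_k}\|\hat y-y_k\|^2 .
\]
Hence
\[
f(x_k,y_k)\le f(x_k,\hat y)+\frac{C}{\rho_k}+\frac{\sigma_k}{2\rho_k}\|\hat y-y_k\|^2 .
\]
Letting $k\to\infty$ and using the continuity of $f$ gives $\limsup_k f(x_k,y_k)\le f(\bar x,\hat y)=\min_{y\in Y}f(\bar x,y)$.

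For the consequence, let $\bar y$ be an accumulation point of $\{y_k\}$. Then $\bar y\in Y$ because $Y$ is closed, and continuity gives $f(\bar x,\bar y)\le\min_Y f(\bar x,\cdot)$, so $\bar y\in\mathcal S(\bar x)$.

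I expect the main obstacle to be the uniform lower bound on $\phi_k(x_k)$. The delicate points there are the boundedness of the selections $\hat y_k$ and the control of the $\sigma_k\langle\hat y_k,z\rangle$ coupling inside the inner minimization, which I would handle by completing the square as above.
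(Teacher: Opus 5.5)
Your proof is correct and is essentially the standard argument the paper relies on (it omits this proof and defers to Lemma B.3 of the conference version). You sandwich $\phi_k(x_k)$ between the lower bound $F(x_k,\hat y_k)-\frac{\sigma_k+\delta_k}{2}\|\hat y_k\|^2$, obtained from a bounded selection $\hat y_k\in\mathcal S(x_k)\cap D$, and the upper bound $\psi_k(x_k,y_k,\hat y)$, then divide by $\rho_k$ and pass to the limit. Your detour through $z_k$ is only a two-step rewriting of $\phi_k(x_k)\le\psi_k(x_k,y_k,\hat y)$, so your first display combined with that lower bound already finishes the proof, and the ``wrong direction'' aside can be deleted.
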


	Now, we present the proof for  Lemma \ref{lem:limsup}.
	\begin{proof}[Proof for Lemma \ref{lem:limsup}]
		We first prove \eqref{limsupphiapp}. Suppose to the contrary that there exist
		$\bar{x}\in X$ and $\varepsilon>0$ such that
		\[
		\limsup_{k\to\infty}\phi_k(\bar{x})>\phi(\bar{x})+\varepsilon.
		\]
		Then, by passing to a subsequence if necessary, we may assume that
		$
		\phi_k(\bar{x})>\phi(\bar{x})+\varepsilon, 
		$ for all $k$.
		For brevity, write
		$y_k^*:=y_k^*(\bar{x}),\;z_k^*:=z_k^*(\bar{x}).$
		Since $(y_k^*,z_k^*)$ is the saddle point defining $\phi_k(\bar{x})$, we have
		\begin{align}
			\phi_k(\bar{x})
			&=F(\bar{x},y_k^*)
			-\rho_k\bigl(f(\bar{x},y_k^*)-f(\bar{x},z_k^*)\bigr)
			+\frac{\sigma_k}{2}\|z_k^*\|^2
			-\sigma_k\langle y_k^*,z_k^*\rangle
			-\frac{\delta_k}{2}\|y_k^*\|^2\nonumber\\
			&\ge \phi(\bar{x})+\varepsilon. \label{lem:limsup_pf_1}
		\end{align}
		
		By Lemma \ref{saddlepointboundedness}, the sequence $\{y_k^*\}$ is bounded.
		Hence, by passing to a further subsequence if necessary, we may assume that
		$y_k^*\to \bar{y}$ for some $\bar{y}\in Y$.
		Applying Lemma \ref{valueofy} to the constant sequence $x_k\equiv \bar{x}$ yields
		$\bar{y}\in \mathcal{S}(\bar{x})$.
		On the other hand, since $z_k^*$ minimizes $\psi_k(\bar{x},y_k^*,z)$ over $z\in Y$,
		we have
		$
		\psi_k(\bar{x},y_k^*,z_k^*)\le \psi_k(\bar{x},y_k^*,y_k^*).
		$
		Expanding this inequality and rearranging terms, we obtain
		\[
		\rho_k\bigl(f(\bar{x},z_k^*)-f(\bar{x},y_k^*)\bigr)
		+\frac{\sigma_k}{2}\|z_k^*-y_k^*\|^2 \le 0.
		\]
		Combining this with \eqref{lem:limsup_pf_1}, we get
		\[
		F(\bar{x},y_k^*)-\frac{\sigma_k+\delta_k}{2}\|y_k^*\|^2
		\ge \phi(\bar{x})+\varepsilon.
		\]
		Letting $k\to\infty$, and using the continuity of $F$, the boundedness of $\{y_k^*\}$,
		and the facts that $\sigma_k\to0$ and $\delta_k\to0$, we arrive at
		\[
		F(\bar{x},\bar{y})\ge \phi(\bar{x})+\varepsilon.
		\]
		However, since $\bar{y}\in \mathcal{S}(\bar{x})$, by the definition of $\phi(\bar{x})$,
		we must have
		$
		F(\bar{x},\bar{y})\le \phi(\bar{x}),
		$
		which is a contradiction. Therefore,
		\[
		\limsup_{k\to\infty}\phi_k(x)\le \phi(x),\qquad \forall x\in X.
		\]
	
	\end{proof}

	\subsection{Proof for Lemma \ref{lem_liminf}}
	We first establish a uniform boundedness property for upper-level solution set when $x$ is restricted to a compact set.
	\begin{lemma}\label{lem_a4}
		Let \(B \subset X\) be a compact set, and let
		\[
		\mathcal S_p(x):=\arg\max_{y\in S(x)} F(x,y).
		\]
		Then there exists a constant \(M_B>0\) such that
		\[
		\|y\|\le M_B,\qquad \forall x\in B,\ \forall y\in \mathcal S_p(x).
		\]
		Equivalently, the set \(\bigcup_{x\in B}\mathcal S_p(x)\) is bounded.
	\end{lemma}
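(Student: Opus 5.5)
We argue by contradiction, as in Lemma~\ref{saddlepointboundedness}. Suppose $\bigcup_{x\in B}\mathcal S_p(x)$ is unbounded. Then there exist $x_k\in B$ and $y_k\in\mathcal S_p(x_k)$ with $\|y_k\|\to\infty$.

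The first step is a uniform lower bound on the pessimistic values along this sequence. By Assumption~\ref{assum2}, applied with the bounded set $D_x=B$, there is a bounded set $D_y\subseteq Y$ and points $\hat y_k\in\mathcal S(x_k)\cap D_y$. Since $y_k$ maximizes $F(x_k,\cdot)$ over $\mathcal S(x_k)$, we get $F(x_k,y_k)\ge F(x_k,\hat y_k)$. The pairs $(x_k,\hat y_k)$ lie in the bounded set $B\times\overline{D_y}$, and $F$ is continuous, so $F(x_k,\hat y_k)\ge -M$ for some constant $M>0$ independent of $k$. Hence $F(x_k,y_k)\ge -M$ for all $k$.

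The second step invokes Lemma~\ref{coerciveuniform} with this $M$. Its hypotheses hold because $B\subseteq X$ is compact, $F$ is continuous, and for each $x\in B$ the function $F(x,\cdot)$ is concave with $-F(x,\cdot)$ coercive, by Assumption~\ref{assum1}. The lemma gives $M_y>0$ such that $F(x,y)<-M$ for all $x\in B$ and all $y\in Y$ with $\|y\|>M_y$. For large $k$ we have $\|y_k\|>M_y$, so $F(x_k,y_k)<-M$. This contradicts the first step. Therefore the union is bounded, and we may take $M_B:=M_y$.

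The only point that needs care is the uniform lower bound on $F(x_k,\hat y_k)$. Pointwise coercivity alone gives no uniformity in $x$. The combination of the bounded selection $\hat y_k$ from Assumption~\ref{assum2} with the uniform coercivity in Lemma~\ref{coerciveuniform} is what supplies it. Everything else is immediate.
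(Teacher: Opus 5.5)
Your proof is correct and follows the same approach as the paper: a uniform lower bound on $F$ over $B\times\overline{D_y}$ from the bounded selection in Assumption~\ref{assum2}, combined with the uniform coercivity estimate of Lemma~\ref{coerciveuniform}. The only difference is that the paper argues directly, fixing arbitrary $x\in B$ and $y\in\mathcal S_p(x)$ to get $\|y\|\le R_B$. Your sequence-based contradiction yields the same bound, and the sequence wrapper is not needed.
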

	
	\begin{proof}
		By Assumption \ref{assum2}, since \(B\) is bounded, there exists a bounded set \(D_B\subset \mathbb R^m\)
		such that
		\[
		S(x)\cap D_B\neq \emptyset,\qquad \forall x\in B.
		\]
		Let \(K_B:=\overline{D_B}\), where \(\overline{C}\) denotes the closure of a set \(C\). Then \(K_B\) is compact. Since \(F\) is continuous and \(B\) is compact,
		the quantity
		\[
		m_B:=\min\{F(x,y):x\in B,\ y\in K_B\}
		\]
		is well defined and finite.
		
		Set
		$C_B:=|m_B|+1>0.$
		By Lemma~\ref{coerciveuniform}, there exists \(R_B>0\) such that
		\[
		F(x,y)<-C_B<m_B,\qquad \forall x\in B,\ \forall y\in Y\ \text{with }\|y\|>R_B.
		\]
		
		Fix any $x\in B$ and $y\in\mathcal S_p(x)$, and choose
		$\hat y\in\mathcal S(x)\cap D_B$.
		By the optimality of $y$ and the definition of $m_B$,
		\[
		F(x,y)\ge F(x,\hat y)\ge m_B.
		\]
		If $\|y\|>R_B$, the preceding uniform coercivity estimate would give
		$F(x,y)<-C_B<m_B$, a contradiction. Thus $\|y\|\le R_B$
		for every $x\in B$ and $y\in\mathcal S_p(x)$, proving the claim
		with $M_B=R_B$.
	\end{proof}
	
	Next, we introduce an inequality relating $\phi_k(x)$ and $\phi(x)$. We omit the proof as it's similar as \citep[Lemma B.5.]{qichao2025single}.
	\begin{lemma}\label{lem_a5}
		For any $x \in X$, 
		\[
		\phi_{k}(x) \ge \phi(x)- \frac{\sigma_k+\delta_k}{2}\|y^*(x)\|^2,
		\]
		where $y^*(x) = \arg\min_{y \in \mathcal{S}_p(x)}\|y\|^2$.
	\end{lemma}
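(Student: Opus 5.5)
We prove the inequality by evaluating the inner maximization in \eqref{phi} at a good test point. Fix $x\in X$ and write $y^*:=y^*(x)$, which is well defined because $\mathcal S_p(x)$ is nonempty, compact and convex. We use the max--min representation
\[
\phi_k(x)=\max_{y\in Y}\min_{z\in Y}\psi_k(x,y,z)\ \ge\ \min_{z\in Y}\psi_k(x,y^*,z).
\]
It therefore suffices to show that $\psi_k(x,y^*,z)\ge \phi(x)-\frac{\sigma_k+\delta_k}{2}\|y^*\|^2$ for every $z\in Y$.

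For any $z\in Y$, expand
\[
\psi_k(x,y^*,z)=F(x,y^*)-\rho_k\bigl(f(x,y^*)-f(x,z)\bigr)+\frac{\sigma_k}{2}\|z\|^2-\sigma_k\langle y^*,z\rangle-\frac{\delta_k}{2}\|y^*\|^2 .
\]
We bound the three groups of terms separately.

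\begin{itemize}
\item Since $y^*\in\mathcal S_p(x)$, we have $F(x,y^*)=\phi(x)$.
\item Since $y^*\in\mathcal S(x)$ minimizes $f(x,\cdot)$ over $Y$ and $z\in Y$, we have $f(x,z)-f(x,y^*)\ge 0$. Hence the penalty term $\rho_k\bigl(f(x,z)-f(x,y^*)\bigr)$ is nonnegative and can be dropped. This step is where $\rho_k>0$ is used, and it is the only place where feasibility of $y^*$ for the lower level matters.
\item Completing the square gives
\[
\frac{\sigma_k}{2}\|z\|^2-\sigma_k\langle y^*,z\rangle=\frac{\sigma_k}{2}\|z-y^*\|^2-\frac{\sigma_k}{2}\|y^*\|^2\ \ge\ -\frac{\sigma_k}{2}\|y^*\|^2 .
\]
\end{itemize}

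Combining the three bounds yields
\[
\psi_k(x,y^*,z)\ \ge\ \phi(x)-\frac{\sigma_k+\delta_k}{2}\|y^*\|^2 \qquad\text{for all } z\in Y.
\]
Taking the minimum over $z\in Y$ and then using the max--min bound above gives the claim.

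The only point requiring care is the interchange of min and max, that is, the validity of the max--min form of $\phi_k$. This is already asserted in \eqref{phi} as a consequence of the strong concavity--convexity of $\psi_k$ and the existence of the saddle point (see the proof of Theorem~\ref{differentiable}), so no further obstacle arises. Choosing the minimum-norm element of $\mathcal S_p(x)$ is not needed for the inequality itself. Any $y\in\mathcal S_p(x)$ works, and the minimum-norm choice simply gives the sharpest bound. Moreover, by Lemma~\ref{lem_a4}, $\|y^*(x)\|$ is uniformly bounded on compact sets, which is how the lemma will be used for the liminf estimate.
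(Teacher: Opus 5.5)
Your proof is correct and follows the paper's route: the same test-point bound $\phi_k(x)\ge\min_{z\in Y}\psi_k(x,y^*(x),z)$, followed by dropping the nonnegative penalty term and completing the square in $z$, exactly as in the analogous estimates in the proofs of Theorem~\ref{differentiable} and Lemma~\ref{lem:bounded-saddles-monotone}. One small simplification: you do not need the min--max interchange, since the inequality $\min_{z}\max_{y}\psi_k\ge\max_{y}\min_{z}\psi_k$ always holds and already gives the direction you use.
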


	Now, we are ready to provide the proof for Lemma \ref{lem_liminf}.
	\begin{proof}[Proof for Lemma \ref{lem_liminf}]
		From Lemma \ref{lem_a5}, for each $k$, we have the inequality:
		\begin{equation}\label{lem_liminf_eq1}
			\phi_{k}(x_k) \ge \phi(x_k)- \frac{\sigma_k+\delta_k}{2}\|y^*(x_k)\|^2,
		\end{equation}
		where $y^*(x) = \arg\min_{y \in \mathcal{S}_p(x)}\|y\|^2$.
		
		Since the sequence $\{x_k\}$ converges to $\bar{x}$, it is bounded. By Lemma \ref{lem_a4}, the sequence $\{y^*(x_k)\}$ is uniformly bounded. 
		Given that $\sigma_k,\delta_k \rightarrow 0$ as $k \rightarrow \infty$ and $\{y^*(x_k)\}$ is bounded, it follows that
		\[
		\lim_{k \to \infty} \frac{\sigma_k+\delta_k}{2}\|y^*(x_k)\|^2 = 0.
		\]
		Combining this with \eqref{lem_liminf_eq1}, we obtain
		\[
		\liminf_{k\to\infty }\phi(x_{k}) \le\liminf_{k\to\infty }\phi_{k}(x_k).
		\]
		Thus, by the lower semi-continuity of $\phi(x)$ at $\bar{x}$, we conclude that:
		\begin{equation*}
			\phi(\bar{x})\le\liminf_{k\to\infty }\phi(x_{k}) \le\liminf_{k\to\infty }\phi_{k}(x_k).
		\end{equation*}
		This completes the proof.
	\end{proof}
	
	\subsection{Proof for Theorem \ref{lim_yz}}
	\begin{proof}
		For brevity, denote
		$y_k^*:=y_k^*(x_k),\; z_k^*:=z_k^*(x_k).$
		First, we establish the property of the accumulation points of the sequence $\{y_k^*\}$.
		Let $\bar y$ be an accumulation point of $\{y_k^*\}$. By taking a subsequence and re-indexing the indices if necessary, we can assume without loss of generality that
		$y_k^*\to \bar y.$
		By applying Lemma \ref{valueofy}, we have $\bar y\in \mathcal S(\bar x)$.
		
		For each $k$, choose $\tilde{y}_k\in \mathcal S_p(x_k)$. Since $x_k\to \bar x$, the sequence $\{x_k\}$ is bounded. Hence, by Lemma \ref{lem_a4}, the sequence $\{\tilde{y}_k\}$ is bounded. Moreover, by Lemma \ref{saddlepointboundedness}, the sequences $\{y_k^*\}$ and $\{z_k^*\}$ are bounded.
		
		By the saddle point property, we have
		\[
		\psi_k(x_k,y_k^*,z_k^*)\ge \psi_k(x_k,\tilde{y}_k,z_k^*).
		\]
		Expanding this inequality and rearranging terms yields
		\begin{align*}
			F(x_k,y_k^*)-F(x_k,\tilde{y}_k)\ge\;&
			\rho_k\big(f(x_k,y_k^*)-f(x_k,\tilde{y}_k)\big)
			+\frac{\delta_k}{2}\big(\|y_k^*\|^2-\|\tilde{y}_k\|^2\big)  \\
			&\quad
			+\sigma_k\big(\langle y_k^*,z_k^*\rangle-\langle \tilde{y}_k,z_k^*\rangle\big).
		\end{align*}
		Since $\tilde{y}_k\in \mathcal S_p(x_k)\subset \mathcal S(x_k)$, we have
		\[
		f(x_k,y_k^*)-f(x_k,\tilde{y}_k)\ge 0.
		\]
		Together with the boundedness of $\{y_k^*\}$, $\{z_k^*\}$, and $\{\tilde{y}_k\}$, and the facts that $\delta_k\to 0$ and $\sigma_k\to 0$, this gives
		\[
		\liminf_{k\to\infty}\big(F(x_k,y_k^*)-F(x_k,\tilde{y}_k)\big)\ge 0.
		\]
		Since $y_k^*\to \bar y$ and $x_k\to \bar x$, the continuity of $F$ implies
		\[
		F(\bar x,\bar y)
		\ge \liminf_{k\to\infty}F(x_k,\tilde{y}_k)
		= \liminf_{k\to\infty}\phi(x_k).
		\]
		By the lower semi-continuity of $\phi$ at $\bar x$, we further obtain
		$F(\bar x,\bar y)\ge \phi(\bar x).$
		On the other hand, since $\bar y\in \mathcal S(\bar x)$, we have
		$F(\bar x,\bar y)\le \phi(\bar x).$
		Therefore,
		$F(\bar x,\bar y)=\phi(\bar x),$
		and hence $\bar y\in \mathcal S_p(\bar x)$.
		
		Next, we establish the convergence property of $z^*_{k}$. Recall that  $\hat y_k\in \mathcal S(x_{k})$ and $\hat y_k\to \bar y.$ 
		By Lemma \ref{saddlepointboundedness}, the sequence $\{z_k^*\}$ is bounded. It suffices to show that any accumulation point of $\{z_k^*\}$ is equal to $\bar y$.
		
		Let $\bar z$ be an arbitrary accumulation point of $\{z_k^*\}$. By taking a further subsequence and re-indexing if necessary, we can assume that
		$z_k^*\to \bar z.$
		From the other side of the saddle point property, we have
		\[
		\psi_{k}(x_k,y_k^*,z_k^*)\le \psi_{k}(x_k,y_k^*,\hat y_k).
		\]
		Expanding this inequality and completing the square gives
		\[
		\rho_{k} f(x_k,z_k^*)
		+\frac{\sigma_{k}}{2}\|z_k^*-y_k^*\|^2
		\le
		\rho_{k} f(x_k,\hat y_k)
		+\frac{\sigma_{k}}{2}\|\hat y_k-y_k^*\|^2.
		\]
		Multiplying by $2/\sigma_{k}$ and rearranging, we obtain
		\[
		\|z_k^*-y_k^*\|^2
		\le
		\frac{2\rho_{k}}{\sigma_{k}}
		\big(f(x_k,\hat y_k)-f(x_k,z_k^*)\big)
		+\|\hat y_k-y_k^*\|^2.
		\]
		Since $\hat y_k\in \mathcal S(x_k)$, it is a global minimizer of $f(x_k,\cdot)$ over $Y$. Hence
		\[
		f(x_k,\hat y_k)-f(x_k,z_k^*)\le 0,
		\]
		and therefore
		\[
		\|z_k^*-y_k^*\|^2\le \|\hat y_k-y_k^*\|^2.
		\]
		Taking the limit as $k\to\infty$, and using $\hat y_k\to \bar y$, $y_k^*\to \bar y$, and $z_k^*\to \bar z$, we obtain
		\[
		\|\bar z-\bar y\|^2
		\le
		\lim_{k\to\infty}\|\hat y_k-y_k^*\|^2
		=0.
		\]
		Thus $\bar z=\bar y$. Since every accumulation point of $\{z_k^*\}$ equals $\bar y$ and $\{z_k^*\}$ is bounded, we conclude that
		$z^*_{k}\to \bar y.$
	\end{proof}
	
	\subsection{Proof for Lemma \ref{multiplierconverge}}

	\begin{proof}
		For brevity, write
		$y_k^*:=y_k^*(x_k),\; z_k^*:=z_k^*(x_k).$
		Since $y_k^*\to\bar y$, Theorem~\ref{lim_yz} and
		Lemma~\ref{lem:isc-implies-lsc} yield
		$z_k^*\to\bar y$ and $\bar y\in\mathcal S_p(\bar x)$.
		
		Let
		\[
		\mathcal{A}^g:=\{i\in\{1,\dots,p\}: g_i(\bar y)=0\}.
		\]
		By lower-level LICQ, the family
			$\{\nabla g_i(\bar y)\}_{i\in\mathcal A^g}$ is linearly independent.
			Since $\bar y\in\mathcal S(\bar x)$, the lower-level KKT conditions admit
			a multiplier, and this independence makes it unique; denote it by
			$\bar\mu$. Thus $\Lambda(\bar x,\bar y)=\{\bar\mu\}$.
			If $\mathcal A^g$ is empty, all constraints are strictly inactive near
			$\bar y$. The unconstrained first-order conditions for the two saddle
			subproblems then give \eqref{optconditiony}--\eqref{optconditionz}
			with $\lambda_k=\mu_k=\bar\mu=0$ for sufficiently large $k$.
			The conclusions follow immediately in this case.
			We may therefore assume that $\mathcal A^g$ is nonempty.
		Hence, by continuity of
		$\nabla g$, there exists a neighborhood $U$ of $\bar y$ such that, for every $y\in U$,
		the matrix $\nabla g_{\mathcal{A}^g}(y)^\top$ has full column rank. Since
		$g_i(\bar y)<0$ for every $i\notin\mathcal{A}^g$, shrinking $U$ if necessary, we may also assume that
		\[
		g_i(y)<0,\qquad \forall\, y\in U,\ \forall\, i\notin\mathcal{A}^g.
		\]
		Because $y_k^*\to\bar y$ and $z_k^*\to\bar y$, we have $y_k^*,z_k^*\in U$ for all sufficiently large $k$.
		
		Now fix such a large $k$. Since
		$Y=\{w\in\mathbb R^m:g(w)\leq0\}$ and $\rho_k>0$, the constraint
		$\rho_k g(w)\leq0$ is an equivalent representation of $w\in Y$.
		Hence, the saddle-point property gives
		\[
		y_k^*\in\arg\max_{\rho_k g(y)\leq0}\psi_k(x_k,y,z_k^*),
		\qquad
		z_k^*\in\arg\min_{\rho_k g(z)\leq0}\psi_k(x_k,y_k^*,z).
		\]
		By the preceding local independence property and saddle-point
		optimality, there exist unique multipliers $\lambda_k$ and
		$\mu_k$ associated with the KKT systems of these constraint problems.
		Then \eqref{optconditiony}--\eqref{optconditionz} follow immediately from the
		KKT systems of the two subproblems.
		
		Moreover, because $g_i(y_k^*)<0$ and $g_i(z_k^*)<0$ for every $i\notin\mathcal{A}^g$, complementarity gives
		\[
		(\lambda_k)_i=(\mu_k)_i=0,\qquad \forall\, i\notin\mathcal{A}^g.
		\]
		Hence it suffices to study the subvectors
		\[
		\lambda_k^{\mathcal{A}^g}:=[(\lambda_k)_i]_{i\in\mathcal{A}^g},\qquad
		\mu_k^{\mathcal{A}^g}:=[(\mu_k)_i]_{i\in\mathcal{A}^g}.
		\]
		Rearranging the stationarity equations in \eqref{optconditiony} and
		\eqref{optconditionz}, we obtain
		\begin{align}
			\nabla g_{\mathcal{A}^g}(y_k^*)^\top \lambda_k^{\mathcal{A}^g}
			&=
			-\nabla_y f(x_k,y_k^*)
			+\frac{\nabla_yF(x_k,y_k^*)-\sigma_k z_k^*-\delta_k y_k^*}{\rho_k},
			\label{lambda-rearranged}
			\\
			\nabla g_{\mathcal{A}^g}(z_k^*)^\top \mu_k^{\mathcal{A}^g}
			&=
			-\nabla_y f(x_k,z_k^*)
			-\frac{\sigma_k(z_k^*-y_k^*)}{\rho_k}.
			\label{mu-rearranged}
		\end{align}
		Since $\nabla g_{\mathcal{A}^g}(y_k^*)^\top$ and $\nabla g_{\mathcal{A}^g}(z_k^*)^\top$ have full column rank,
		\eqref{lambda-rearranged}--\eqref{mu-rearranged} yield
		\begin{align*}
			\lambda_k^{\mathcal{A}^g}
			&=
			\big[\nabla g_{\mathcal{A}^g}(y_k^*)^\top\big]^+
			\left(
			-\nabla_y f(x_k,y_k^*)
			+\frac{\nabla_yF(x_k,y_k^*)-\sigma_k z_k^*-\delta_k y_k^*}{\rho_k}
			\right),
			\\
			\mu_k^{\mathcal{A}^g}
			&=
			\big[\nabla g_{\mathcal{A}^g}(z_k^*)^\top\big]^+
			\left(
			-\nabla_y f(x_k,z_k^*)
			-\frac{\sigma_k(z_k^*-y_k^*)}{\rho_k}
			\right).
		\end{align*}
		Here, $A^+$ denotes the Moore--Penrose pseudoinverse of a matrix $A$.
		Because each matrix to which it is applied above has full column rank,
		$A^+=(A^\top A)^{-1}A^\top$ in both formulas.
		
		Because $x_k\to\bar x$, $y_k^*\to\bar y$, $z_k^*\to\bar y$,
		$\frac1{\rho_k}\to 0$, $\sigma_k\to 0$ and $\delta_k\to 0,$
		the vectors inside the parentheses converge to $-\nabla_y f(\bar x,\bar y)$. On the other hand,
		for all sufficiently large $k$,
		\[
		\mathrm{rank}\big(\nabla g_{\mathcal{A}^g}(y_k^*)\big)
		=
		\mathrm{rank}\big(\nabla g_{\mathcal{A}^g}(z_k^*)\big)
		=
		|\mathcal{A}^g|
		=
		\mathrm{rank}\big(\nabla g_{\mathcal{A}^g}(\bar y)\big).
		\]
		Therefore, by the convergence of Moore--Penrose pseudoinverses \citep[Corollary 3.5]{stewart1977perturbation},
		\[
		\big[\nabla g_{\mathcal{A}^g}(y_k^*)^\top\big]^+
		\to
		\big[\nabla g_{\mathcal{A}^g}(\bar y)^\top\big]^+,
		\qquad
		\big[\nabla g_{\mathcal{A}^g}(z_k^*)^\top\big]^+
		\to
		\big[\nabla g_{\mathcal{A}^g}(\bar y)^\top\big]^+.
		\]
		Since $\bar\mu\in\Lambda(\bar x,\bar y)$ and the active-gradient
		matrix has full column rank, the common limit on
		$\mathcal A^g$ is precisely $\bar\mu^{\mathcal A^g}$. All components
		outside $\mathcal A^g$ vanish. Hence
		$\lambda_k\to\bar\mu$ and $\mu_k\to\bar\mu$, as asserted.
	\end{proof}

	\section{Proof for Section \ref{sec:stochastic-algorithm}}\label{appendix:stochastic-algorithm}

	\begin{lemma}\label{lem:bounded-saddles-monotone}
		Under Assumptions~\ref{assum1}, \ref{assum2}, and~\ref{ass:Xcompact},
		suppose that, for every $k\ge0$,
		\[
		\rho_k\ge0,\qquad
		\sigma_k\ge\sigma_{k+1}>0,\qquad
		\delta_k\ge\delta_{k+1}>0.
		\]
		Then there exists a constant $M>0$ such that
		\[
		\max\bigl\{\|y_k^*(x)\|,\|z_k^*(x)\|\bigr\}\le M,
		\qquad \forall k\ge0,\ \forall x\in X.
		\]
	\end{lemma}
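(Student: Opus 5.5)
We will follow the structure of Lemma~\ref{saddlepointboundedness}, with $B=X$ compact by Assumption~\ref{ass:Xcompact}. The difference is that \eqref{paramcriterion} is no longer available. In particular, neither $\sigma_k/\delta_k$ bounded nor $\rho_k\to\infty$ is assumed; only monotone nonincreasing regularization is. The new ingredient is a uniform coercivity bound in $y$ that does not rely on a ratio of the form $\sigma_k/\delta_k$. We will obtain it from the boundedness $\sigma_k\le\sigma_0$, $\delta_k\le\delta_0$ combined with strong concavity.

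First, fix by Assumption~\ref{assum2} a bounded set $D\subset Y$ with $\mathcal S(x)\cap D\neq\emptyset$ for all $x\in X$, and pick $\hat y(x)\in\mathcal S(x)\cap D$. To bound $\|y_k^*(x)\|$, use the saddle inequality $\psi_k(x,y_k^*,z_k^*)\ge\psi_k(x,\hat y,z_k^*)$ and $f(x,y_k^*)\ge f(x,\hat y)$. This gives
\[
F(x,y_k^*)-F(x,\hat y)\ge \tfrac{\delta_k}{2}\bigl(\|y_k^*\|^2-\|\hat y\|^2\bigr)+\sigma_k\langle y_k^*-\hat y,z_k^*\rangle .
\]
The cross term involves $z_k^*$, so we first control $z_k^*$ in terms of $y_k^*$. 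As in Lemma~\ref{saddlepointboundedness}, comparing with $z=\hat y$ yields $\|z_k^*-y_k^*\|\le\|\hat y-y_k^*\|$, hence $\|z_k^*\|\le 2\|y_k^*\|+\|\hat y\|$. Substituting, the right-hand side is bounded below by $-\sigma_k(3\|y_k^*\|+C)(\|y_k^*\|+C)-C$. This is a negative quadratic in $\|y_k^*\|$ with coefficient at most $3\sigma_0$ uniformly in $k$. We then need the left-hand side $F(x,y_k^*)-F(x,\hat y)$ to decrease faster than $-3\sigma_0\|y\|^2$, and plain coercivity does not provide that. This is the main obstacle.

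To overcome it, we will avoid the crude cross-term bound and exploit the exact saddle structure. Since $h(x,z)=\max_y\psi_k$, we have $\phi_k(x)\le h(x,\hat y)=\max_{y}\bigl[F(x,y)-\rho_k(f(x,y)-f(x,\hat y))-\tfrac{\sigma_k}{2}\|y-\hat y\|^2-\tfrac{\delta_k}{2}\|y\|^2\bigr]+\tfrac{\sigma_k}{2}\|\hat y\|^2$. Here we used the identity $\tfrac{\sigma}{2}\|z\|^2-\sigma\langle y,z\rangle=\tfrac{\sigma}{2}\|z-y\|^2-\tfrac{\sigma}{2}\|y\|^2$. Using $\max_z\min_y$ in the other order gives $\phi_k(x)\ge\min_z\psi_k(x,\hat y,z)\ge F(x,\hat y)-\tfrac{\sigma_k+\delta_k}{2}\|\hat y\|^2$, after completing the square in $z$ and using $\hat y\in\mathcal S(x)$. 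Both bounds are uniform over $k$ and $x\in X$, so $|\phi_k(x)|\le C$. Rewriting $\psi_k(x,y_k^*,z_k^*)=\phi_k(x)$ in the completed-square form, with the $\rho_k$ term rearranged via the $z$-optimality inequality, gives $F(x,y_k^*)-\tfrac{\sigma_k+\delta_k}{2}\|y_k^*\|^2\ge\phi_k(x)\ge -C$, exactly as in the proof of Lemma~\ref{lem:limsup}. Since $\sigma_k,\delta_k\ge 0$, this yields $F(x,y_k^*)\ge -C$. Lemma~\ref{coerciveuniform} with $B=X$ and $M=C$ then gives $\|y_k^*(x)\|\le M_y$ for all $k$ and $x$.

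Finally, the $z$-bound follows from $\|z_k^*\|\le 2\|y_k^*\|+\|\hat y\|$. The upper bound on $\phi_k$ must be uniform in $\rho_k$. This holds because the term $-\rho_k(f(x,y)-f(x,\hat y))\le 0$ can be dropped, leaving $\max_y F(x,y)+\tfrac{\sigma_0}{2}\sup_D\|\hat y\|^2$, which is bounded over the compact set $X$ by Lemma~\ref{coerciveuniform}. The monotonicity hypotheses enter only through $\sigma_k\le\sigma_0$ and $\delta_k\le\delta_0$. We will need to double-check the upper-bound direction for $\phi_k$, but only the lower bound is actually used.
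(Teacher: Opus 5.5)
Your argument is correct and is essentially the paper's proof. You use the sandwich $F(x,y_k^*(x))\ge\phi_k(x)\ge F(x,\hat y(x))-\frac{\sigma_k+\delta_k}{2}\|\hat y(x)\|^2$, obtained from comparison with $z=y_k^*(x)$ on one side and from fixing $y=\hat y(x)$ and completing the square in $z$ on the other; then you apply Lemma~\ref{coerciveuniform} and finish with $\|z_k^*(x)\|\le 2\|y_k^*(x)\|+\|\hat y(x)\|$.

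Your side upper bound on $\phi_k$ misapplies the completed-square identity: at $z=\hat y$ it produces $+\frac{\sigma_k}{2}\|y-\hat y\|^2-\frac{\sigma_k}{2}\|y\|^2$, which cannot simply be dropped. As you note, that bound is not needed and can be deleted.
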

	\begin{proof}
		By Assumptions~\ref{assum2} and~\ref{ass:Xcompact}, there exists a bounded
		set $D\subset Y$ such that we may choose
		$\hat y(x)\in\mathcal S(x)\cap D$ for every $x\in X$.
		The saddle-point property, comparison with $z=y_k^*(x)$, and completion
		of the square in $z$ at $y=\hat y(x)$ give
		\[
		F(x,y_k^*(x))\ge\phi_k(x)
		\ge F(x,\hat y(x))
		-\frac{\sigma_k+\delta_k}{2}\|\hat y(x)\|^2.
		\]
		The right-hand side is bounded below uniformly in $k$ and $x\in X$,
		since $X$ is compact, $D$ is bounded, and
		$\sigma_k+\delta_k\le\sigma_0+\delta_0$.
		Lemma~\ref{coerciveuniform} therefore yields uniform boundedness of
		$y_k^*(x)$. Moreover, comparison with $z=\hat y(x)$ in the
		$z$-subproblem gives
		\[
		\|z_k^*(x)-y_k^*(x)\|
		\le\|\hat y(x)-y_k^*(x)\|.
		\]
		Consequently,
		$\|z_k^*(x)\|\le2\|y_k^*(x)\|+\|\hat y(x)\|$,
		so $z_k^*(x)$ is uniformly bounded as well.
	\end{proof}

	For later use, define the saddle operator
	$T_k:\mathbb R^{n+2m}\to\mathbb R^{2m}$ by
	\[
	T_k(x,y,z)
	:=
	\big(-\nabla_y\psi_k(x,y,z),\ \nabla_z\psi_k(x,y,z)\big).
	\]
	For any fixed $x\in X$, since $\psi_k(x,\cdot,\cdot)$ is
	$\delta_k$-strongly concave in $y$ and $\sigma_k$-strongly convex in $z$,
	the operator $T_k(x,\cdot,\cdot)$ is strongly monotone on $Y\times Y$:
	\begin{equation}\label{T_mono}
		\begin{aligned}
			\big\langle T_k(x,u)-T_k(x,u'),\,u-u'\big\rangle
			& \ge
			\bar\sigma_k\|u-u'\|^2,
			\qquad \forall u,u'\in Y\times Y .
		\end{aligned}
	\end{equation}
	Moreover, the Lipschitz continuity of $\nabla F$ and $\nabla f$ implies that
	\begin{equation}\label{T_Lip}
		\|T_k(x,u)-T_k(x,u')\|
		\le (L_F+\rho_k L_f+2\sigma_k+\delta_k)\|u-u'\|,
		\quad \forall u,u'\in Y\times Y.
	\end{equation}
	The next four lemmas characterize the properties of
	\(u_k^*(\cdot)\) and \(\phi_k(\cdot)\). Their proofs follows from
	\citep[Lemmas C.2, C.3, C.4 and C.6]{qichao2025single}.
	\begin{lemma}\label{uxk+1xk}
		Let $\{\rho_k\}$, $\{\sigma_k\}$  and $\{\delta_k\}$ be sequences such that $ \rho_{k}, \sigma_{k},\delta_{k} >0$. Define $\bar{\sigma}_k = \min\{\sigma_k, \delta_k\}$. Then, for any $x, x'\in X$, the corresponding saddle points $u^*_k(x)$ and $u^*_k(x')$ satisfy:
		\begin{align}\label{uxk+1xkequ}
			\|u^*_k(x')-u^*_k(x)\|\le\frac{L_{F}+2\rho_kL_{f}}{\bar{\sigma}_k}\|x'-x\|.
		\end{align}
	\end{lemma}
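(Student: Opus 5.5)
The plan is to use the strong monotonicity of the saddle operator $T_k$ together with the variational-inequality characterization of the saddle point. For fixed $x$, the pair $u_k^*(x)=(y_k^*(x),z_k^*(x))$ is the unique saddle point of $\psi_k(x,\cdot,\cdot)$ over $Y\times Y$. By first-order optimality (concavity in $y$, convexity in $z$) this is equivalent to
\[
\langle T_k(x,u_k^*(x)),\,u-u_k^*(x)\rangle\ge0\qquad\forall u\in Y\times Y.
\]
Write $u:=u_k^*(x)$ and $u':=u_k^*(x')$. I will test the inequality for $x$ at $u'$, and the inequality for $x'$ at $u$, and then add them. This gives
\[
\langle T_k(x,u)-T_k(x',u'),\,u'-u\rangle\ge0 .
\]
Next I split the difference as $T_k(x,u)-T_k(x,u')+T_k(x,u')-T_k(x',u')$. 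Strong monotonicity \eqref{T_mono} then gives
\[
\bar\sigma_k\|u-u'\|^2\le\langle T_k(x,u')-T_k(x',u'),\,u-u'\rangle\le\|T_k(x,u')-T_k(x',u')\|\,\|u-u'\|.
\]
Dividing by $\|u-u'\|$ (the case $u=u'$ is trivial) leaves only the $x$-Lipschitz bound of $T_k$ at fixed $u'$ to establish.

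For that bound, note that the regularization terms $\frac{\sigma_k}{2}\|z\|^2-\sigma_k\langle y,z\rangle-\frac{\delta_k}{2}\|y\|^2$ do not depend on $x$, so they cancel in the difference. With $u'=(y',z')$, the difference is
\[
\bigl(-\nabla_yF(x,y')+\nabla_yF(x',y')+\rho_k(\nabla_yf(x,y')-\nabla_yf(x',y')),\ \rho_k(\nabla_yf(x,z')-\nabla_yf(x',z'))\bigr).
\]
By the Lipschitz continuity of $\nabla F$ and $\nabla f$ on $X\times Y$, the first block has norm at most $(L_F+\rho_kL_f)\|x-x'\|$, and the second block has norm at most $\rho_kL_f\|x-x'\|$. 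The Euclidean norm of the pair is bounded by the sum of the block norms, so the total is at most $(L_F+2\rho_kL_f)\|x-x'\|$. This yields \eqref{uxk+1xkequ}.

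The only step needing care is the first one: justifying the variational-inequality characterization and the use of the Lipschitz constants on the relevant points. Existence and uniqueness of the saddle point come from Theorem~\ref{differentiable}. The points $(x,y'),(x',y'),(x,z'),(x',z')$ all lie in $X\times Y$, where the Lipschitz assumptions hold. No boundedness of the saddle points is needed.
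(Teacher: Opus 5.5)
Your proof is correct and follows essentially the route the paper intends. The appendix sets up the saddle operator $T_k$ and its strong monotonicity \eqref{T_mono} immediately before this lemma, deferring the details to the conference version. The argument is exactly yours: the variational-inequality characterization of $u_k^*(\cdot)$, strong monotonicity, and the $x$-Lipschitz bound $\|T_k(x,u')-T_k(x',u')\|\le(L_F+2\rho_kL_f)\|x-x'\|$.

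One harmless slip: after splitting, the middle term should be $\langle T_k(x,u')-T_k(x',u'),\,u'-u\rangle$ rather than $\langle\cdot,\,u-u'\rangle$. This does not affect the conclusion, since you immediately apply Cauchy--Schwarz.
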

	
	\begin{lemma}\label{uk+1uk}
		Let $\{\rho_k\}$, $\{\sigma_k\}$  and $\{\delta_k\}$ be sequences such that $\rho_{k+1} \ge \rho_{k}>0$, $\sigma_{k} \ge\sigma_{k+1}>0$ and $\delta_{k}\ge\delta_{k+1}>0$. Define $\bar{\sigma}_k = \min\{\sigma_k, \delta_k\}$. Then, for any fixed $x \in X$, we have 
		\begin{equation}\label{uk+1ukequ}
			\| u_{k+1}^*(x) - u_{k}^*(x)\| \le \frac{2 (\rho_{k+1} - \rho_k)}{\bar{\sigma}_k}M_{\nabla f} + \frac{3(\sigma_{k} - \sigma_{k+1})}{\bar{\sigma}_k}M_y+\frac{\delta_k-\delta_{k+1}}{\bar{\sigma}_k}M_y.
		\end{equation}
	\end{lemma}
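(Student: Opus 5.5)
We prove Lemma~\ref{uk+1uk} with the strong monotonicity estimate \eqref{T_mono} for the saddle operator, in the same way one proves the $x$-sensitivity bound of Lemma~\ref{uxk+1xk}. Fix $x\in X$ and write $u_k^*:=u_k^*(x)$ and $u_{k+1}^*:=u_{k+1}^*(x)$. Each of these is the unique solution of a variational inequality on $Y\times Y$:
\[
\langle T_k(x,u_k^*),u-u_k^*\rangle\ge0,\qquad
\langle T_{k+1}(x,u_{k+1}^*),u-u_{k+1}^*\rangle\ge0,\qquad \forall u\in Y\times Y.
\]
Test the first inequality with $u=u_{k+1}^*$ and the second with $u=u_k^*$, then add them. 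This gives
\[
\langle T_k(x,u_{k+1}^*)-T_k(x,u_k^*),u_{k+1}^*-u_k^*\rangle
\le \langle T_k(x,u_{k+1}^*)-T_{k+1}(x,u_{k+1}^*),u_{k+1}^*-u_k^*\rangle .
\]
Apply \eqref{T_mono} to the left-hand side and Cauchy--Schwarz to the right-hand side. We obtain
\[
\bar\sigma_k\|u_{k+1}^*-u_k^*\|\le\|T_k(x,u_{k+1}^*)-T_{k+1}(x,u_{k+1}^*)\|.
\]
Everything then reduces to bounding how much the operator changes with the parameters, evaluated at the fixed point $u_{k+1}^*=(y,z)$.

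Write out the difference of operators from \eqref{psi}. The $F$-terms cancel. The $y$-component of $T_k-T_{k+1}$ is $(\rho_k-\rho_{k+1})\nabla_yf(x,y)+(\sigma_k-\sigma_{k+1})z+(\delta_k-\delta_{k+1})y$. The $z$-component is $(\rho_k-\rho_{k+1})\nabla_yf(x,z)+(\sigma_k-\sigma_{k+1})(z-y)$. Bound the norm of the stacked vector by the sum of the norms of its components. Using $\|\nabla f(x,y_{k+1}^*(x))\|,\|\nabla f(x,z_{k+1}^*(x))\|\le M_{\nabla f}$ and $\|y\|,\|z\|\le M_y$, which are the uniform constants defined before the auxiliary lemmas and finite by Lemma~\ref{lem:bounded-saddles-monotone}, we get
\[
2(\rho_{k+1}-\rho_k)M_{\nabla f}+(\sigma_k-\sigma_{k+1})(M_y+2M_y)+(\delta_k-\delta_{k+1})M_y .
\]
Here the monotonicity assumptions make every parameter difference nonnegative. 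Dividing by $\bar\sigma_k$ yields \eqref{uk+1ukequ}.

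Two points need care. First, strong monotonicity must be applied with the constant $\bar\sigma_k$ of $T_k$, not $\bar\sigma_{k+1}$. This holds because we linearized around $T_k$ and put $T_{k+1}$ only into the perturbation term, so the denominator is $\bar\sigma_k$ as the statement requires. Second, the constants $M_y$ and $M_{\nabla f}$ must be evaluated at the $(k+1)$-st saddle point. This is legitimate because they are suprema over all $k$. The only real obstacle is this bookkeeping of signs and constants. In particular, the coefficient $3$ on the $\sigma$-term comes from $\|z\|+\|z-y\|\le 3M_y$.
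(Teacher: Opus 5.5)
Your proposal is correct and follows the argument the paper relies on (it defers to Lemma C.3 of the conference version): the variational-inequality characterization of the two saddle points, strong monotonicity of $T_k(x,\cdot)$ with modulus $\bar\sigma_k$, and a term-by-term bound on $T_k(x,u_{k+1}^*)-T_{k+1}(x,u_{k+1}^*)$ via $M_{\nabla f}$ and $M_y$, which reproduces the constants $2$, $3$, and $1$ exactly. Keeping $T_{k+1}$ only in the perturbation term, so that the denominator is $\bar\sigma_k$ rather than $\bar\sigma_{k+1}$, is exactly what the stated bound requires.
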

	
	\begin{lemma}\label{Lipshitzofphi}
		Let $\{\rho_k\}$, $\{\sigma_k\}$  and $\{\delta_k\}$ be sequences such that $\rho_{k+1} \ge \rho_{k}>0$, $\sigma_{k} \ge\sigma_{k+1}>0$ and $\delta_{k}\ge\delta_{k+1}>0$. Define $\bar{\sigma}_k = \min\{\sigma_k, \delta_k\}$. Then, for any $x, x'\in X$, we have  
		\begin{align}
			\|\nabla \phi_k(x^\prime)-\nabla \phi_k(x)\|\le L_{\phi_k}\|x^\prime-x\|,
		\end{align}
		where $L_{\phi_k}:= \frac{(L_F + 2\rho_kL_f)(L_{F}+2\rho_kL_{f} + \bar{\sigma}_k)}{\bar{\sigma}_k} $.
	\end{lemma}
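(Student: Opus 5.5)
The statement to prove is Lemma~\ref{Lipshitzofphi}: $\nabla\phi_k$ is Lipschitz on $X$ with constant $L_{\phi_k}$. The plan is to combine the gradient formula of Theorem~\ref{differentiable} with the saddle-point sensitivity bound of Lemma~\ref{uxk+1xk}.

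By Theorem~\ref{differentiable}, $\nabla\phi_k(x)=\nabla_x\psi_k(x,u_k^*(x))$, where $u_k^*(x)=(y_k^*(x),z_k^*(x))$. For $x,x'\in X$ I split
\[
\nabla\phi_k(x')-\nabla\phi_k(x)
=\bigl[\nabla_x\psi_k(x',u_k^*(x'))-\nabla_x\psi_k(x,u_k^*(x'))\bigr]
+\bigl[\nabla_x\psi_k(x,u_k^*(x'))-\nabla_x\psi_k(x,u_k^*(x))\bigr].
\]
Since $\nabla_x\psi_k(x,y,z)=\nabla_xF(x,y)-\rho_k\nabla_xf(x,y)+\rho_k\nabla_xf(x,z)$, the regularization terms do not appear.

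\textbf{First bracket.} Only $x$ changes. Lipschitz continuity of $\nabla F$ and $\nabla f$ gives the bound $(L_F+2\rho_kL_f)\|x'-x\|$.

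\textbf{Second bracket.} The point $x$ is fixed and $(y,z)$ changes. Its norm is at most
\[
(L_F+\rho_kL_f)\|y_k^*(x')-y_k^*(x)\|+\rho_kL_f\|z_k^*(x')-z_k^*(x)\|.
\]
Both displacements are bounded by $\|u_k^*(x')-u_k^*(x)\|$, so the second bracket is at most $(L_F+2\rho_kL_f)\|u_k^*(x')-u_k^*(x)\|$. Lemma~\ref{uxk+1xk} then bounds this by $(L_F+2\rho_kL_f)^2\bar\sigma_k^{-1}\|x'-x\|$.

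\textbf{Combining.} Adding the two estimates gives the constant
\[
(L_F+2\rho_kL_f)\bigl(1+(L_F+2\rho_kL_f)/\bar\sigma_k\bigr)=\frac{(L_F+2\rho_kL_f)(L_F+2\rho_kL_f+\bar\sigma_k)}{\bar\sigma_k}=L_{\phi_k}.
\]
The monotonicity assumptions on the parameter sequences are not used; the argument holds for each fixed $k$.

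\textbf{Main obstacle.} The real difficulty lies in Lemma~\ref{uxk+1xk}, which is assumed available. Were it needed, I would prove it from the strong monotonicity \eqref{T_mono} of $T_k(x,\cdot)$ together with the variational inequalities characterizing $u_k^*(x)$ and $u_k^*(x')$. The key estimate is that $T_k(\cdot,u)$ is Lipschitz in $x$ with constant $L_F+2\rho_kL_f$, which again uses only the Lipschitz constants of $\nabla F$ and $\nabla f$.

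A minor point in the combination step: the bound should be done blockwise by the triangle inequality rather than through a vector-norm identity, so that the constant comes out exactly as $L_F+2\rho_kL_f$.
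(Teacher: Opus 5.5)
Your proposal is correct and follows the same route as the paper, which defers this lemma to Lemma~C.4 of the conference version: apply the gradient formula of Theorem~\ref{differentiable}, bound the variation of $\nabla_x\psi_k$ in $x$ and in $(y,z)$ by $L_F+2\rho_kL_f$ each, and use the saddle-point sensitivity bound of Lemma~\ref{uxk+1xk} to obtain exactly $L_{\phi_k}$. Your side remarks are also accurate: the parameter monotonicity is not needed, and Lemma~\ref{uxk+1xk} follows from \eqref{T_mono} together with the $(L_F+2\rho_kL_f)$-Lipschitz continuity of $T_k(\cdot,u)$ in $x$.
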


	\begin{lemma}\label{lem_b6}
		Let $\{\rho_k\}$, $\{\sigma_k\}$  and $\{\delta_k\}$ be sequences such that $\rho_{k+1} \ge \rho_{k}>0$, $\sigma_{k} \ge\sigma_{k+1}>0$ and $\delta_{k}\ge\delta_{k+1}>0$. Then, for any $x \in X$, we have 
		\begin{equation}
			\phi_{k+1}(x) - \phi_{k}(x) \le \left( \sigma_k - \sigma_{k+1} \right)\frac{M_y^2 }{2} + 2\left(\rho_{k+1}-\rho_{k}\right)M_f+(\delta_{k}-\delta_{k+1})\frac{M_y^2}{2}.
		\end{equation}
	\end{lemma}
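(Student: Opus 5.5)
We prove Lemma~\ref{lem_b6} by comparing saddle values through the saddle-point characterization. The idea is to bound $\phi_{k+1}(x)$ from above by evaluating $\psi_{k+1}$ and $\psi_k$ at mixed points. Write $u_k^*=(y_k^*,z_k^*)$ for $u_k^*(x)$ and $u_{k+1}^*=(y_{k+1}^*,z_{k+1}^*)$ for $u_{k+1}^*(x)$.

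Since $z_{k+1}^*$ minimizes $\psi_{k+1}(x,y_{k+1}^*,\cdot)$ over $Y$, we have
\[
\phi_{k+1}(x)=\psi_{k+1}(x,y_{k+1}^*,z_{k+1}^*)\le\psi_{k+1}(x,y_{k+1}^*,z_k^*).
\]
Since $y_k^*$ maximizes $\psi_k(x,\cdot,z_k^*)$, we have
\[
\phi_k(x)=\psi_k(x,y_k^*,z_k^*)\ge\psi_k(x,y_{k+1}^*,z_k^*).
\]
Subtracting the two gives
\[
\phi_{k+1}(x)-\phi_k(x)\le\psi_{k+1}(x,y_{k+1}^*,z_k^*)-\psi_k(x,y_{k+1}^*,z_k^*).
\]
The whole comparison therefore happens at the single point $(y,z)=(y_{k+1}^*,z_k^*)$, where only the parameters change.

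We then expand the difference of the two saddle functions at a common point $(y,z)$ using definition \eqref{psi}:
\[
\psi_{k+1}-\psi_k=-(\rho_{k+1}-\rho_k)\bigl(f(x,y)-f(x,z)\bigr)+\frac{\sigma_{k+1}-\sigma_k}{2}\|z\|^2-(\sigma_{k+1}-\sigma_k)\langle y,z\rangle-\frac{\delta_{k+1}-\delta_k}{2}\|y\|^2.
\]
The terms are bounded as follows, using the monotonicity assumptions.
\begin{itemize}
\item \textbf{Penalty term.} It is at most $(\rho_{k+1}-\rho_k)\bigl(|f(x,y)|+|f(x,z)|\bigr)\le 2(\rho_{k+1}-\rho_k)M_f$, because $y=y_{k+1}^*(x)$ and $z=z_k^*(x)$ are both saddle components covered by the definition of $M_f$.
\item \textbf{$\sigma$-terms.} Combine them as $(\sigma_k-\sigma_{k+1})\bigl(\langle y,z\rangle-\tfrac12\|z\|^2\bigr)$. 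Since $\langle y,z\rangle-\tfrac12\|z\|^2\le\tfrac12\|y\|^2\le\tfrac12 M_y^2$, this gives $(\sigma_k-\sigma_{k+1})M_y^2/2$.
\item \textbf{$\delta$-term.} It equals $\frac{\delta_k-\delta_{k+1}}{2}\|y\|^2\le(\delta_k-\delta_{k+1})M_y^2/2$.
\end{itemize}
Summing the three bounds yields exactly the claimed inequality.

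The only delicate point is choosing the mixed point so that every evaluation involves actual saddle components, since $M_y$ and $M_f$ are defined only over these. The ordering above, using $y_{k+1}^*$ together with $z_k^*$, achieves this. It also produces the favorable sign in the $\sigma$-cross term, which is absorbed by completing the square in $z$ rather than by bounding $\|z\|$ separately; this is what gives the sharp constant $\tfrac12$.
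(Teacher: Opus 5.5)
Your proof is correct and takes essentially the paper's route; the paper defers this lemma to Lemma~C.6 of the conference version, so the comparison is with that argument. You bound $\phi_{k+1}(x)-\phi_k(x)$ by $\psi_{k+1}-\psi_k$ at the mixed point $(y_{k+1}^*(x),z_k^*(x))$ using the two saddle inequalities, then control the penalty, $\sigma$- and $\delta$-terms via parameter monotonicity, the definitions of $M_f$ and $M_y$, and the bound $\langle y,z\rangle-\tfrac12\|z\|^2\le\tfrac12\|y\|^2$, which is exactly what produces the constant $M_y^2/2$.
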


	\subsection{Proof for Lemma \ref{udescent}}
	
	\begin{proof}
		By denoting $\tilde{T}(x^k, u^k)=(-d_y^k,d_z^k)$, the update rule for $u^{k+1}$ can be formulated as follows,
		\[
		u^{k+1} = \mathrm{Proj}_{Y \times Y} \left( u^k - \beta_k \tilde{T}(x^k, u^k) \right).
		\]
		where $\E[\tilde{T}(x^k, u^k)\mid \F_k]=T(x^k, u^k)$ and
		\[
		E[\|\tilde{T}(x^k, u^k)-T(x^k, u^k)\|^2\mid \F_k]\le (1+2\rho_k+2\rho_k^2)\Delta^2.
		\]
		Recall that  $u^*_k(x^k) = (y_{k}^*(x^k), z_{k}^*(x^k))$ is the unique saddle point of the minimax problem
		\[
		\min_{z \in {Y}} \max_{ y \in {Y}}\psi_{k}(x^k, y, z).
		\]
		From the first-order optimality conditions, $u^*_k(x^k)$ satisfies:
		\[
		0 \in T_k(x^k,u^*_k(x^k)) + N_{Y \times Y}(u^*_k(x^k)),
		\]
		which implies
		$
		u^*_k(x^k) = \mathrm{Proj}_{Y\times Y} \left( u^*_k(x^k) - \beta_k T_k(x^k, u^*_k(x^k)) \right).$
		Utilizing the non-expansiveness of the projection operator, the strong monotonicity of $T_k$ in \eqref{T_mono} and its Lipschitz continuity with respect to $u$ in \eqref{T_Lip}, we have
		\begin{align*}
			&\E[\|u^{k+1}-u_k^*(x^k)\|^2\mid \F_k]\\
			\le\;&\E[\|(u^k-\beta_k\tilde{T}_k(x^k,u^k))-(u_k^*(x^k)-\beta_kT_k(x^k,u_k^*(x^k)))\|^2\mid \F_k]\\
			\le \;&\|u^{k}-u_k^*(x^k)\|^2-\E[2\beta_k\langle T_k(x^k,u^k)-T_k(x^k,u_k^*(x^k)),u^{k}-u_k^*(x^k)\rangle\mid \F_k]\\
			&+\E [\beta_k^2\|\tilde{T}_k(x^k,u^k)-T_k(x^k,u^k)\|^2\mid \F_k]+\beta_k^2\|T_k(x^k,u^k)-T_k(x^k,u_k^*(x^k))\|^2 \\
			\le\;&(1-\bar{\sigma}_k\beta_k)\|u^{k}-u_k^*(x^k)\|^2+(1+2\rho_k+2\rho_k^2)\beta_k^2 \Delta^2.
		\end{align*}
		This completes the proof.
	\end{proof}	
	\subsection{Proof for Lemma \ref{udescentlemma}}
	\begin{proof}
		By applying the Cauchy-Schwarz inequality, for any $\tau>0$, we have 
		\begin{align}
			&\E[\|u^{k+1}- u_{k+1}^*(x^{k+1})\|^2\mid \F_k]\nonumber\\
			\le \;&(1+\tau)\E[\|u^{k+1}- u_{k}^*(x^{k})\|^2\mid \F_k]+(1+\frac{1}{\tau})\E[\|u_{k}^*(x^k)- u_{k+1}^*(x^{k+1})\|^2\mid \F_k]\nonumber\\
			\le\; &(1+\tau)\E[\|u^{k+1}- u_{k}^*(x^{k})\|^2\mid \F_k]+2(1+\frac{1}{\tau})\E[\|u_{k}^*(x^k)- u_{k}^*(x^{k+1})\|^2\mid \F_k]\nonumber\\
			&+2(1+\frac{1}{\tau})\E[\|u_{k}^*(x^{k+1})- u_{k+1}^*(x^{k+1})\|^2\mid \F_k].\label{udescentlemmaeq1}
		\end{align}
		Next, take $\tau=\frac{1}{2}\beta_k \bar{\sigma}_k$ in the above inequality. By applying Lemma \ref{udescent}, we obtain the following bound:
		\begin{align*}
			(1+\tau)\E[\|u^{k+1}- u_{k}^*(x^{k})\|^2\mid \F_k]\le& (1-\frac{1}{2}\beta_k \bar{\sigma}_k)\|u^{k}- u_{k}^*(x^{k})\|^2\\
			&+(1+\frac{1}{2}\beta_k \bar{\sigma}_k)(1+2\rho_k+2\rho_k^2)\beta_k^2 \Delta^2.
		\end{align*}
		Using Lemma \ref{uxk+1xk}, we can further bound the second term as follows:
		\begin{align*}
			&2(1+\frac{1}{\tau})\E[\|u_{k}^*(x^{k+1})-u_{k}^*(x^{k})\|^2\mid \F_k]\\
			\le\;& 2(1+\frac{2}{\beta_k\bar{\sigma}_k})\frac{(L_{F}+2\rho_kL_{f})^2}{\bar{\sigma}_k^2}\E[\|x^{k+1}-x^k\|^2\mid \F_k].
		\end{align*}
		Next, applying Lemma \ref{uk+1uk} with $x=x^{k+1}$, we obtain
		\[
		\begin{aligned}
			&2(1+\frac{1}{\tau})\E[\|u_{k+1}^*(x^{k+1})-u_{k}^*(x^{k+1})\|^2\mid \F_k] \\
			\le& 2(1+\frac{2}{\beta_k\bar{\sigma}_k}) ( \frac{12(\rho_{k+1} - \rho_k)^2}{\bar{\sigma}_k^2}M_{\nabla f}^2 + \frac{27(\sigma_{k} - \sigma_{k+1})^2}{\bar{\sigma}_k^2}M_y^2\hspace{-0.1em}+\hspace{-0.1em}\frac{3(\delta_{k} - \delta_{k+1})^2}{\bar{\sigma}_k^2}M_y^2 ).
		\end{aligned}
		\]        
		Finally, combining the above three inequalities with \eqref{udescentlemmaeq1}, we arrive at the desired inequality.
	\end{proof}
	
	\subsection{Proof for Lemma \ref{xdescentlemma}}

	\begin{proof}
		We decompose the total difference as follows:
		\begin{equation*}
			\begin{aligned}
							&\E[\phi_{k+1}(x^{k+1})\mid \F_k]-\phi_{k}(x^k)\\
				=\;&\E[\phi_{k+1}(x^{k+1})-\phi_{k}(x^{k+1})\mid \F_k] + \E[\phi_{k}(x^{k+1})\mid \F_k]-\phi_{k}(x^k).
			\end{aligned}
		\end{equation*}
		For the first term, applying Lemma \ref{lem_b6} with $x = x^{k+1}$:
		\begin{equation}\label{lem_b7_eq3}
			\begin{aligned}
							&\E[\phi_{k+1}(x^{k+1}) - \phi_{k}(x^{k+1})\mid \F_k] \\
				\le\; &\left( \sigma_k - \sigma_{k+1} \right)\frac{M_y^2 }{2} + 2\left(\rho_{k+1}-\rho_{k}\right)M_f+\left( \delta_k - \delta_{k+1} \right)\frac{M_y^2 }{2}.
			\end{aligned}
		\end{equation}
		
		For the second term, $\phi_{k}(x^{k+1})-\phi_{k}(x^k)$, we use the $L_{\phi_k}$-Lipschitz continuity of $\nabla \phi_{k}(x)$ established in Lemma \ref{Lipshitzofphi}. A standard descent inequality (cf. \cite[Lemma 5.7]{beck2017first} for smooth functions) states:
		\begin{equation}\label{lem_b7_eq1}
			\begin{aligned}
							&\E[\phi_{k}(x^{k+1})\mid \F_k]-\phi_{k}(x^k) \\\le\;& \E[\langle\nabla\phi_{k}(x^k),x^{k+1}-x^{k}\rangle\mid \F_k]+\frac{L_{\phi_k}}{2}\E[\|x^{k+1}-x^{k}\|^2\mid \F_k].
			\end{aligned}
		\end{equation}
		Next, applying the update rule for $x^{k+1}$, we get
		\[
		\frac{1}{\alpha_k} \E[\|x^{k+1}-x^{k}\|^2\mid \F_k] \le \E[\langle - d_{x}^k,x^{k+1}-x^{k}\rangle\mid \F_k].
		\]
		By combining this inequality with the previous one, and using the formula for $\nabla\phi_{k}(x^k)$ given in Theorem \ref{differentiable}, we obtain
		\begin{equation}\label{equnablaphi}
			\begin{aligned}
				&\E[\phi_{k}(x^{k+1})\mid \F_k]-\phi_{k}(x^k) + \left( \frac{1}{\alpha_k} - \frac{L_{\phi_k}}{2}\right) \E[\|x^{k+1}-x^{k}\|^2\mid \F_k] \\
				\le\,&\E[\langle\nabla_x\psi_{k}(x^k,y_{k}^*(x^k),z_{k}^*(x^k)) - {d}_{x}^k,x^{k+1}-x^{k}\rangle\mid \F_k] \\
				=\,&\E[\langle\nabla_x\psi_{k}(x^k,y_{k}^*(x^k),z_{k}^*(x^k)) - \nabla_x\psi_{k}(x^k,y^{k+1},z^{k+1}),x^{k+1}-x^{k}\rangle\mid \F_k] \\
				&+\E[\langle\nabla_x\psi_{k}(x^k,y^{k+1},z^{k+1})-d_{x}^k,x^{k+1}-x^{k}\rangle\mid \F_k]\\
			\end{aligned}
		\end{equation}
		For the first term in the right side of \eqref{equnablaphi},
		\begin{align*}
			&\E[\langle\nabla_x\psi_{k}(x^k,y_{k}^*(x^k),z_{k}^*(x^k)) - \nabla_x\psi_{k}(x^k,y^{k+1},z^{k+1}),x^{k+1}-x^{k}\rangle\mid \F_k] \\
			\le\, & \E[\left( (L_F + \rho_k L_f) \| y^{k+1}- y_{k}^*(x^k)\| + \rho_k L_f \| z^{k+1}- z_{k}^*(x^k)\| \right) \| x^{k+1}-x^{k}\|\mid \F_k] \\
			\le\, & \frac{\alpha_k}{2} (L_F + 2\rho_k L_f)^2\E[\| u^{k+1}- u_{k}^*(x^k)\|^2\mid \F_k] + \frac{1}{2\alpha_k} \E[\| x^{k+1}-x^{k}\|^2\mid \F_k] \\
			\le\, &\frac{\alpha_k}{2} (L_F + 2\rho_k L_f)^2 (1-\bar{\sigma}_k \beta_k) \E[\| u^{k}- u_{k}^*(x^k)\|^2\mid \F_k] \\
			&+ \frac{1}{2\alpha_k} \E[\| x^{k+1}-x^{k}\|^2\mid \F_k]+\frac{\alpha_k}{2} (L_F + 2\rho_k L_f)^2(1+2\rho_k+2\rho_k^2)\beta_k^2\Delta^2, 
		\end{align*}
		where the last inequality follows from Lemma \ref{udescent}. 
		For the second in the right side of  \eqref{equnablaphi},
		\begin{align*}
			&\E[\langle\nabla_x\psi_{k}(x^k,y^{k+1},z^{k+1})-d_{x}^k,x^{k+1}-x^{k}\rangle\mid \F_k] \\
			\le\, &\alpha_k \E[\|\nabla_x\psi_{k}(x^k,y^{k+1},z^{k+1})-d_{x}^k\|^2\mid \F_k]+\frac{1}{4\alpha_k}\E[\|x^{k+1}-x^{k}\|^2\mid \F_k]\\
			=&\alpha_k \E[\|e_x^k\|^2\mid \F_k]+\frac{1}{4\alpha_k}\E[\|x^{k+1}-x^{k}\|^2\mid \F_k].
		\end{align*}
		The conclusion follows by combining the above inequality with \eqref{lem_b7_eq1} and \eqref{lem_b7_eq3}.
	\end{proof}

	\subsection{Proof for Lemma \ref{variancereduction}}
	
	\begin{proof}
		For brevity, write
		\[
		\begin{aligned}
			&D_x^k:=\nabla_x\psi_k(x^k,y^{k+1},z^{k+1}),\quad D_x^{k-1}:=\nabla_x\psi_{k-1}(x^{k-1},y^{k},z^{k}),\\
			&G_x^k := \nabla_x\psi_k(x^k,y^{k+1},z^{k+1};\xi_k^x),
			\quad
			H_x^{k-1} := \nabla_x\psi_{k-1}(x^{k-1},y^k,z^k;\xi_k^x).
		\end{aligned}
		\]
		By \eqref{direction_x}, we have
		$d_x^k
		=
		G_x^k + (1-\eta_k)\big(d_x^{k-1}-H_x^{k-1}\big),$
		and therefore
		\[
		e_x^k
		=
		G_x^k-D_x^k
		+
		(1-\eta_k)\big(e_x^{k-1}+D_x^{k-1}-H_x^{k-1}\big).
		\]
		Set
		$A_k := G_x^k-D_x^k,
		\;
		B_k := D_x^{k-1}-H_x^{k-1}.$
		Then
		\[
		e_x^k = (1-\eta_k)e_x^{k-1}+A_k+(1-\eta_k)B_k.
		\]
		
		Since $x^k,x^{k-1},y^k,z^k,y^{k+1},z^{k+1}$ and $e_x^{k-1}$ are
		$\F_{k+\frac12}$-measurable, while $\xi_k^x$ is independent of $\F_{k+\frac12}$,
		Assumption~\ref{ass:oracle} yields
		\[
		\E[A_k\mid \F_{k+\frac12}] = 0,
		\qquad
		\E[B_k\mid \F_{k+\frac12}] = 0.
		\]
		Hence
		\[
		\begin{aligned}
			&\E\!\left[\|e_x^k\|^2 \mid \F_{k+\frac12}\right]\\
			=\;&
			(1-\eta_k)^2\|e_x^{k-1}\|^2
			+
			\E\!\left[\|A_k+(1-\eta_k)B_k\|^2 \mid \F_{k+\frac12}\right] \\
			\le\;&
			(1-\eta_k)^2\|e_x^{k-1}\|^2
			+
			2\E\!\left[\|A_k+B_k\|^2 \mid \F_{k+\frac12}\right]
			+
			2\eta_k^2\E\!\left[\|B_k\|^2 \mid \F_{k+\frac12}\right].
		\end{aligned}
		\]
		By Assumption~\ref{ass:oracle},
		\[
		\E\!\left[\|B_k\|^2 \mid \F_{k+\frac12}\right]
		=
		\E\!\left[\|H_x^{k-1}-D_x^{k-1}\|^2 \mid \F_{k+\frac12}\right]
		\le (3+6\rho_{k}^2)\Delta^2.
		\]
		
		Next, define
		$\widehat D_x^{k-1}:=\nabla_x\psi_k(x^{k-1},y^k,z^k),
		\;
		\widehat G_x^{k-1}:=\nabla_x\psi_k(x^{k-1},y^k,z^k;\xi_k^x)$, $S_k
		:=
		\big(G_x^k-\widehat G_x^{k-1}\big)
		-
		\big(D_x^k-\widehat D_x^{k-1}\big),$ and $R_k
		:=
		\big(\widehat G_x^{k-1}-H_x^{k-1}\big)
		-
		\big(\widehat D_x^{k-1}-D_x^{k-1}\big).$
		Then we have:
		\[
		A_k+B_k = S_k + R_k,
		\]
		We first bound $R_k$. By the definition of $\nabla_x\psi_k$,
		\[		\widehat G_x^{k-1}-H_x^{k-1}
		=
		-(\rho_k-\rho_{k-1})
		\Big(
		\nabla_xf(x^{k-1},y^k;\xi_k^x)
		-
		\nabla_xf(x^{k-1},z^k;\xi_k^x)
		\Big),\]
		and similarly,
		$
		\widehat D_x^{k-1}-D_x^{k-1}
		=
		-(\rho_k-\rho_{k-1})
		\Big(
		\nabla_xf(x^{k-1},y^k)
		-
		\nabla_xf(x^{k-1},z^k)
		\Big).
		$
		Therefore,
		\[
		\begin{aligned}
			R_k
			=-(\rho_k-\rho_{k-1})\Big[
			&\big(\nabla_xf(x^{k-1},y^k;\xi_k^x)-\nabla_xf(x^{k-1},z^k;\xi_k^x)\big) \\
			&-\big(\nabla_xf(x^{k-1},y^k)-\nabla_xf(x^{k-1},z^k)\big)
			\Big].
		\end{aligned}
		\]
		Using $\|a-b\|^2\le 2\|a\|^2+2\|b\|^2$ and Assumption~\ref{ass:oracle}, we obtain
		\[
		\E\!\left[\|R_k\|^2 \mid \F_{k+\frac12}\right]
		\le
		4(\rho_k-\rho_{k-1})^2\Delta^2.
		\]
		
		We next bound $S_k$. Since
		$
		S_k
		=
		\big(G_x^k-\widehat G_x^{k-1}\big)
		-
		\E\!\left[\big(G_x^k-\widehat G_x^{k-1}\big)\mid \F_{k+\frac12}\right],
		$
		we have
		\[
		\E\!\left[\|S_k\|^2\mid \F_{k+\frac12}\right]
		\le
		\E\!\left[\|G_x^k-\widehat G_x^{k-1}\|^2\mid \F_{k+\frac12}\right].
		\]
		By Assumption~\ref{ass:twopoint}, the definition of $\nabla_x\psi_k$, and
		$\|a+b+c\|^2\le 3(\|a\|^2+\|b\|^2+\|c\|^2)$,
		\[
		\begin{aligned}
			&\E\!\left[\|G_x^k-\widehat G_x^{k-1}\|^2\mid \F_{k+\frac12}\right]\\
			\le\;&
			3L_F^2\big(\|x^k-x^{k-1}\|^2\hspace{-0.1em}+\hspace{-0.1em}\|y^{k+1}-y^k\|^2\big) +3\rho_k^2L_f^2\big(\|x^k-x^{k-1}\|^2\hspace{-0.1em}+\hspace{-0.1em}\|y^{k+1}-y^k\|^2\big) \\
			&+3\rho_k^2L_f^2\big(\|x^k-x^{k-1}\|^2+\|z^{k+1}-z^k\|^2\big) \\
			\le\;&
			3(L_F+2\rho_kL_f)^2\big(\|x^k-x^{k-1}\|^2+\|u^{k+1}-u^k\|^2\big).
		\end{aligned}
		\]
		
		Combining the bounds for $S_k$ and $R_k$, we get
		\[
		\begin{aligned}
			\E\!\left[\|A_k+B_k\|^2\mid \F_{k+\frac12}\right]
			\le\;&
			6(L_F+2\rho_kL_f)^2\big(\|x^k-x^{k-1}\|^2+\|u^{k+1}-u^k\|^2\big)\\
			&+
			8(\rho_k-\rho_{k-1})^2\Delta^2.
		\end{aligned}
		\]
		Substituting the last two estimates into the recursion for
		$\E[\|e_x^k\|^2\mid \F_{k+\frac12}]$ and taking conditional expectation with respect to $\F_k$ and using the tower
		property gives
		\[
		\begin{aligned}
			&\E\!\left[\|e_x^k\|^2\mid \F_k\right]\\
			\le\;&
			(1-\eta_k)^2\|e_x^{k-1}\|^2
			+(6+12\rho_{k}^2)\eta_k^2\Delta^2+12(L_F+2\rho_kL_f)^2\|x^k-x^{k-1}\|^2\\
			&
			+16(\rho_k-\rho_{k-1})^2\Delta^2
			+12(L_F+2\rho_kL_f)^2\,\E\!\left[\|u^{k+1}-u^k\|^2 \mid \F_k\right].
		\end{aligned}
		\]
	Lemma~\ref{lem:bounded-saddles-monotone} ensures that $u_k^*(x^k)$ is uniformly bounded. As a result, by the definition of $T_{k}$, there exist a constant $M_T$ such that $\|T_k(x^{k},u_{k}^*(x^{k}))\|^2\le \rho_{k}^2M_T$.
		Finally, by the non-expansiveness of projection and boundedness of $u_k^*(x^k)$, we have 
		\begin{align*}
			&\E[\|u^{k+1}-u^{k}\|^2\mid \F_k]\\
			\le\; &\beta_{k}^2\E[\|\tilde{T}_{k}(x^{k},u^{k})\|^2\mid \F_k]\\
			\le \;&3\beta_{k}^2\E[\|\tilde{T}_{k}(x^{k},u^{k})-{T}_{k}(x^{k},u^{k})\|^2\mid \F_k]+3\beta_{k}^2\|T_{k}(x^{k},u^{k})-T_{k}(x^{k},u_{k}^*(x^{k}))\|^2\\
			&+3\beta_{k}^2\|T_k(x^{k},u_{k}^*(x^{k}))\|^2\\
			\le\;&(3+6\rho_k+6\rho_k^2)\beta_{k}^2\Delta^2+3\beta_{k}^2(L_F+\rho_{k}L_f+2\sigma_{k}+\delta_k )^2\|u^{k}-u_{k}^*(x^{k})\|^2\\
			&+3\beta_{k}^2 \rho_{k}^2 M_{T}.
		\end{align*}
		which implies the desired result.
	\end{proof}
	\subsection{Proof for Proposition \ref{meritfunctionproposition}}
	The next lemma provides a uniform lower bound needed in the merit function construction.
	
	\begin{lemma}
		\label{lem:det-concave-lower-bound}
		Let $\{\rho_k\}$, $\{\sigma_k\}$  and $\{\delta_k\}$ be sequences such that $\rho_{k+1} \ge \rho_{k}>0$, $\sigma_{k} \ge\sigma_{k+1}>0$ and $\delta_{k}\ge\delta_{k+1}>0$.
		Assume that $\phi$ is bounded below on $X$. Then there exists a constant $\underline\phi\in\mathbb R$ such that
		\[
		\phi_k(x)\ge \underline\phi,
		\qquad \forall x\in X,\ \forall k.
		\]
	\end{lemma}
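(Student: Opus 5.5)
The plan is to bound $\phi_k(x)$ from below by evaluating the saddle function at a well-chosen test point $y$ and comparing with $\phi(x)$. This is essentially Lemma~\ref{lem_a5} with a uniform control on the regularization term.

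Fix $x\in X$ and $k$, and choose $\bar y(x)\in\mathcal S_p(x)$. By the max-min representation in \eqref{phi},
\[
\phi_k(x)=\max_{y\in Y}\min_{z\in Y}\psi_k(x,y,z)\ge\min_{z\in Y}\psi_k(x,\bar y(x),z).
\]
For any $z\in Y$, optimality of $\bar y(x)$ for the lower level gives $f(x,z)-f(x,\bar y(x))\ge0$, so the penalty term $-\rho_k(f(x,\bar y(x))-f(x,z))$ is nonnegative. Completing the square in $z$ gives
\[
\tfrac{\sigma_k}{2}\|z\|^2-\sigma_k\langle\bar y(x),z\rangle=\tfrac{\sigma_k}{2}\|z-\bar y(x)\|^2-\tfrac{\sigma_k}{2}\|\bar y(x)\|^2\ge-\tfrac{\sigma_k}{2}\|\bar y(x)\|^2 .
\]
Combining these two bounds yields
\[
\phi_k(x)\ge F(x,\bar y(x))-\tfrac{\sigma_k+\delta_k}{2}\|\bar y(x)\|^2=\phi(x)-\tfrac{\sigma_k+\delta_k}{2}\|\bar y(x)\|^2 .
\]

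It remains to make the correction term uniform in $x$ and $k$. Because the sequences are monotone, $\sigma_k+\delta_k\le\sigma_0+\delta_0$ for all $k$. Since $X$ is compact (Assumption~\ref{ass:Xcompact}), Lemma~\ref{lem_a4} with $B=X$ gives $\|\bar y(x)\|\le M_X$ for every $x\in X$. If $\phi\ge\phi_{\inf}$ on $X$, we may therefore set
\[
\underline\phi:=\phi_{\inf}-\tfrac{\sigma_0+\delta_0}{2}M_X^2 .
\]

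The only delicate point is uniformity. This requires Lemma~\ref{lem_a4}, which rests on the uniform coercivity from Lemma~\ref{coerciveuniform} and on Assumption~\ref{assum2}, together with the monotone decrease of $\sigma_k$ and $\delta_k$. The monotonicity of $\rho_k$ is not needed, since the penalty term is simply discarded by sign.
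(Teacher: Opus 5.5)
Your proof is correct and follows the paper's argument. The paper likewise combines the bound $\phi_k(x)\ge\phi(x)-\frac{\sigma_k+\delta_k}{2}\|y^*(x)\|^2$ from Lemma~\ref{lem_a5} with the uniform bound on $\bigcup_{x\in X}\mathcal S_p(x)$ from Lemma~\ref{lem_a4} (with $B=X$) and a uniform bound on $\sigma_k+\delta_k$. The differences are minor: you rederive Lemma~\ref{lem_a5} by testing $y=\bar y(x)$ and completing the square in $z$, the paper takes the minimum-norm element of $\mathcal S_p(x)$ where any selection suffices, and you make explicit the use of compactness of $X$, which the paper's appeal to Lemma~\ref{lem_a4} also needs.
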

	
	\begin{proof}
		Let
		$y^*(x):=\arg\min_{y\in \mathcal S_p(x)}\|y\|^2.$
		By Lemma~\ref{lem_a4} with $B=X$, the set $\bigcup_{x\in X}\mathcal S_p(x)$ is bounded, so there exists a constant $M_{y^*}>0$ such that
		\[
		\|y^*(x)\|\le M_{y^*},
		\qquad \forall x\in X.
		\]
		On the other hand, Lemma~\ref{lem_a5} gives
		\[
		\phi_k(x)
		\ge
		\phi(x)-\frac{\sigma_k+\delta_k}{2}\|y^*(x)\|^2
		\ge
		\inf_{x\in X}\phi(x)-\frac{\sup_k\sigma_k+\sup_k\delta_k}{2}M_{y^*}^2.
		\]
		Therefore the choice
		\[
		\underline\phi
		:=
		\inf_{x\in X}\phi(x)-\frac{\sup_k\sigma_k+\sup_k\delta_k}{2}M_{y^*}^2
		\]
		exists. In particular, for the polynomial schedules used below, one may simply take
		\[
		\underline\phi=
		\inf_{x\in X}\phi(x)-\frac{\sigma_0+\delta_0}{2}M_{y^*}^2.
		\]
	\end{proof}
	\begin{proof}[Proof for Proposition \ref{meritfunctionproposition}]
		Throughout the proof, $C>0$ denotes a generic constant independent of $k$,
		whose value may change from line to line. For simplicity, write
		\begin{align*}
					&L_k:=L_F+2\rho_kL_f,\; L_{T_k}=L_F+\rho_{k}L_f+2\sigma_{k}+\delta_k,\;\\
			&\Delta x^k:=x^{k+1}-x^k,\;
			E_k^u:=\|u^k-u_k^*(x^k)\|^2 .
		\end{align*}
		Under the parameter schedules \eqref{par},
		we have, 
		\[
		\bar\sigma_k=\min\{\sigma_0,\delta_0\}(k+1)^{-t}=O(k^{-t}),\qquad
		L_k=O(k^t),\qquad
		L_{\phi_k}=O(k^{3t}).
		\]
		Moreover, by choosing $\beta_0/\sigma_0$ sufficiently small if necessary, the
		stepsize condition
		$0<\beta_k<
		\frac{\bar\sigma_k}{(L_F+\rho_kL_f+2\sigma_k+\delta_k)^2}$ holds for every $k$. Hence Lemmas~\ref{udescentlemma},
		\ref{xdescentlemma}, and \ref{variancereduction} are applicable.
		
		Since $a_{k+1}\le a_k$, $b_{k+1}\le b_k$, $c_{k+1}\le c_k$, and $d_{k+1}\le d_k$,
		the definition of $V_k$ yields
		\[
		\begin{aligned}
			&\E[V_{k+1}\mid\F_k]-V_k\\
			\le\;&
			a_k\big(\E[\phi_{k+1}(x^{k+1})\mid\F_k]-\phi_k(x^k)\big)\\ &+b_k\big(\E[\|u^{k+1}-u_{k+1}^*(x^{k+1})\|^2\mid\F_k]
			-\|u^k-u_k^*(x^k)\|^2\big)\\
			&+c_k\big(\E[\|e_x^k\|^2\mid\F_k]-\|e_x^{k-1}\|^2\big)+d_k\big(\E[\|\Delta x^k\|^2\mid\F_k]-\|\Delta x^{k-1}\|^2\big).
		\end{aligned}
		\]
		Applying Lemmas~\ref{udescentlemma}, \ref{xdescentlemma}, and
		\ref{variancereduction}, and collecting the coefficients of
		$\E[\|\Delta x^k\|^2\mid\F_k]$, $E_k^u$, $\|e_x^{k-1}\|^2$, and
		$\|\Delta x^{k-1}\|^2$, we obtain
		\[
		\begin{aligned}
			\E[V_{k+1}\mid\F_k]-V_k
			\le\;&
			-A_k\,\E[\|\Delta x^k\|^2\mid\F_k]
			-B_k\,E_k^u
			-(2c_k\eta_k-c_k\eta_k^2)\|e_x^{k-1}\|^2\\
			&+a_k\alpha_k\,\E[\|e_x^k\|^2\mid\F_k]
			-D_k\|\Delta x^{k-1}\|^2
			+R_k^{\rm det}+R_k^{\rm st},
		\end{aligned}
		\]
		where
		\[
		\begin{aligned}
			A_k
			:=\;&
			a_k\left(\frac{1}{4\alpha_k}-\frac{L_{\phi_k}}{2}\right)
			-2b_k\left(1+\frac{2}{\beta_k\bar\sigma_k}\right)\frac{L_k^2}{\bar\sigma_k^2}
			-d_k,\\
			B_k
			:=\;&
			\frac12 b_k\beta_k\bar\sigma_k
			-\frac12 a_k\alpha_kL_k^2(1-\beta_k\bar\sigma_k)
			-36c_k\beta_k^2L_k^2L_{T_k}^2,\\
			D_k
			:=&d_k-12c_kL_k^2 ,\\
						R_k^{\rm det}:=\;& a_k\left( \sigma_k - \sigma_{k+1} \right)\frac{M_y^2 }{2} + 2a_k\left(\rho_{k+1}-\rho_{k}\right)M_f+ a_k\left( \delta_k - \delta_{k+1} \right)\frac{M_y^2 }{2}\\
			&+b_k(2+\frac{4}{\beta_k\bar{\sigma}_k}) ( \frac{12(\rho_{k+1} - \rho_k)^2}{\bar{\sigma}_k^2}M_{\nabla f}^2 + \frac{27(\sigma_{k} - \sigma_{k+1})^2}{\bar{\sigma}_k^2}M_y^2\\
			&+\frac{3(\delta_{k} - \delta_{k+1})^2}{\bar{\sigma}_k^2}M_y^2 ),
		\end{aligned}
		\]\[
			\begin{aligned}
			R_k^{\rm st}:=\;&
			\frac12 a_k\alpha_kL_k^2
			(1+2\rho_k+2\rho_k^2)\beta_k^2\Delta^2+c_k(6+12\rho_k^2)\eta_k^2\Delta^2\\
			&+b_k\left(1+\frac12\beta_k\bar\sigma_k\right)
			(1+2\rho_k+2\rho_k^2)\beta_k^2\Delta^2
			+16c_k(\rho_k-\rho_{k-1})^2\Delta^2
			\\
			&+36c_k\beta_k^2L_k^2
			(1+2\rho_k+2\rho_k^2)\Delta^2+36c_k\beta_k^2L_k^2\rho_k^2M_T.
		\end{aligned}
		\]
		
		From the schedules \eqref{par},
		\[
		\frac{a_k}{\alpha_k}=O(k^{7t+s}),\quad
		a_kL_{\phi_k}=O(k^t),\quad b_k\left(1+\frac{2}{\beta_k\bar\sigma_k}\right)\frac{L_k^2}{\bar\sigma_k^2}
		=O(k^{6t+s}).
		\]
		Therefore, for all sufficiently large $k$,
		$A_k\ge \frac{a_k}{8\alpha_k}.$
		
		We now convert the descent in $\Delta x^k$ into the projected-gradient mapping
		$\mathcal G_k$. By the nonexpansiveness of $P_X$ and the definition of
		$\mathcal G_k$, we have
		\[
		\begin{aligned}
			\|\mathcal G_k(x^k)\|^2
			=&
			\frac{1}{\alpha_k^2}
			\left\|x^k-P_X\big(x^k-\alpha_k\nabla\phi_k(x^k)\big)\right\|^2\\
			\le&
			\frac{3}{\alpha_k^2}\E[\|\Delta x^k\|^2\mid\F_k]
			+3L_k^2\E[\|u^{k+1}-u_k^*(x^k)\|^2\mid\F_k]\\&
			+3\E[\|e_x^k\|^2\mid\F_k].
		\end{aligned}
		\]
		Using Lemma~\ref{udescent} in the second term gives
		\begin{align*}
					\|\mathcal G_k(x^k)\|^2
			\le&
			\frac{3}{\alpha_k^2}\E[\|\Delta x^k\|^2\mid\F_k]
			+3L_k^2E_k^u
			+3\E[\|e_x^k\|^2\mid\F_k]\\&
			+3L_k^2(1+2\rho_k+2\rho_k^2)
			\beta_k^2\Delta^2 .
		\end{align*}
		Consequently,
		\[
		\begin{aligned}
			&-\frac{a_k}{8\alpha_k}\E[\|\Delta x^k\|^2\mid\F_k]
			\\\le\;&
			-\frac{a_k\alpha_k}{24}\|\mathcal G_k(x^k)\|^2
			+\frac{a_k\alpha_kL_k^2}{8}E_k^u
			+\frac{a_k\alpha_k}{8}\E[\|e_x^k\|^2\mid\F_k]\\
			&+\frac18 a_k\alpha_kL_k^2
			(1+2\rho_k+2\rho_k^2)\beta_k^2\Delta^2 .
		\end{aligned}
		\]
		Substituting this estimate into the previous bound yields
		\[
		\begin{aligned}
			\E[V_{k+1}\mid\F_k]-V_k
			\le\;&
			-\frac{a_k\alpha_k}{24}\|\mathcal G_k(x^k)\|^2
			-\widetilde B_k E_k^u
			-(2c_k\eta_k-c_k\eta_k^2)\|e_x^{k-1}\|^2\\
			&+\frac98a_k\alpha_k\E[\|e_x^k\|^2\mid\F_k]
			-\widetilde D_k\|\Delta x^{k-1}\|^2
			+\widetilde R_k^{\rm det}+\widetilde R_k^{\rm st},
		\end{aligned}
		\]
		where
		\begin{align*}
					&\widetilde B_k:=B_k-\frac18a_k\alpha_kL_k^2,\;
			\widetilde D_k:=D_k, \; \widetilde R_k^{\rm det}=R_k^{\rm det}, \;\\ &\widetilde R_k^{\rm st}=R_k^{\rm st}+\frac{1}{8} a_k\alpha_kL_k^2
			(1+2\rho_k+2\rho_k^2)\beta_k^2\Delta^2 .
		\end{align*}
		
		It remains to handle the term involving $\E[\|e_x^k\|^2\mid\F_k]$. Applying
		Lemma~\ref{variancereduction} once more and using $c_k\eta_k=O(k^{-10t-s}),\;
		a_k\alpha_k=O(k^{-11t-s}),$ we obtain
		\[
		\begin{aligned}
			&-(2c_k\eta_k-c_k\eta_k^2)\|e_x^{k-1}\|^2
			+\frac{9}{8}a_k\alpha_k\E[\|e_x^k\|^2\mid\F_k]\\
			\le\;&
			-c_k\eta_k\|e_x^{k-1}\|^2
			+\frac{27}{2} a_k\alpha_kL_k^2\|\Delta x^{k-1}\|^2
			+\frac{81}{2} a_k\alpha_k\beta_k^2L_k^2L_{T_k}^2E_k^u
			+\widehat R_k^{\rm st}
			+\widehat R_k^{\rm det},
		\end{aligned}
		\]
		where 
		\[\begin{aligned}
			\widehat R_k^{\rm det}:=\; &18 a_k\alpha_k(\rho_k-\rho_{k-1})^2\Delta^2\\
			\widehat R_k^{\rm st}:=\;&
			\frac98 a_k\alpha_k(6+12\rho_k^2)
			\eta_k^2\Delta^2+\frac{81}{2}a_k\alpha_k\beta_k^2L_k^2
			(1+2\rho_k+2\rho_k^2)\Delta^2\\
			&+\frac{81}{2}a_k\alpha_k\beta_k^2L_k^2
			\rho_k^2M_T.
		\end{aligned}
		\]
		Combining the last two displays gives
		\[
		\begin{aligned}
			\E[V_{k+1}\mid\F_k]-V_k
			\le\;&
			-\frac{a_k\alpha_k}{24}\|\mathcal G_k(x^k)\|^2
			-\bar{B}_kE_k^u
			-\bar{D}_k\|\Delta x^{k-1}\|^2
			\\
			&-c_k\eta_k\|e_x^{k-1}\|^2+\bar R_k^{\rm det}+\bar R_k^{\rm st},
		\end{aligned}
		\]
		where
		\begin{align*}
				&	\bar{B}_k
			= \widetilde B_k -\frac{81}{2}  a_k\alpha_k\beta_k^2L_k^2L_{T_k}^2,\;
			\bar{D}_k
			=\widetilde D_k-\frac{27}{2} a_k\alpha_kL_k^2, \\&\bar R_k^{\rm det}=\widehat R_k^{\rm det}+\widetilde R_k^{\rm det}, \; \bar R_k^{\rm st}=\widehat R_k^{\rm st}+\widetilde R_k^{\rm st}.
		\end{align*}
		Recalling the parameter schedules \eqref{par}, we have 
		\[
		\begin{aligned}
			&b_k\beta_k\bar\sigma_k=O(k^{-8t-s}),\quad
			a_k\alpha_kL_k^2=O(k^{-9t-s}),\quad\\
			&c_k\beta_k^2L_k^2L_{T_k}^2=O(k^{-9t-2s}), \quad a_k\alpha_k\beta_k^2L_k^2L_{T_k}^2= O(k^{-15t-3s}).
		\end{aligned}
		\]
		Thus, for all sufficiently large $k$,
		$\bar{B}_k\ge \frac{1}{4}b_k\beta_k\bar\sigma_k.$
		Moreover,
		\[
		d_k=O(k^{-2t}),\quad
		c_kL_k^2=O(k^{-3t}),\quad a_k\alpha_kL_k^2=O(k^{-9t-s}).
		\]
		so $\bar{D}_k\ge 0$ for all sufficiently large $k$.
		Consequently, we obtain
		\[
		\E[V_{k+1}\mid\F_k]-V_k
		\le
		-\frac{a_k\alpha_k}{24}\|\mathcal G_k(x^k)\|^2
		-\frac14b_k\beta_k\bar\sigma_kE_k^u
		+\bar{R}_k^{\rm det}+\bar R_k^{\rm st}.
		\]
		Finally, we estimate the two remainders. Using parameter schedules \eqref{par}. We can verify that:
		\[
		\bar{R}_k^{\rm det}=O(k^{-t-1})+O(k^{-2+s+6t}),
		\]
		Since $11t+s<1$, by defining
		$\zeta_k:=\bar R_k^{\rm det},$
		we have $\sum_{k=0}^{\infty}\zeta_k<\infty$.
		
		For the stochastic remainder, the schedules imply
		\[
		\bar R_k^{\rm st}=O(k^{-9t-2s}).
		\]
		Therefore, for all sufficiently large $k$,
		\[
		\E[V_{k+1}\mid\F_k]-V_k
		\le
		-\frac{a_k\alpha_k}{24}\|\mathcal G_k(x^k)\|^2
		-\frac14 b_k\beta_k\bar\sigma_k\|u^k-u_k^*(x^k)\|^2
		+Ck^{-9t-2s}+\zeta_k .
		\]
		This proves the desired descent inequality.
	\end{proof}

\end{appendices}

\bibliographystyle{plain}
\bibliography{sn-bibliography}

@inproceedings{lorraine2020optimizing,
  title = {Optimizing Millions of Hyperparameters by Implicit Differentiation},
  author = {Lorraine, Jonathan and Vicol, Paul and Duvenaud, David},
  booktitle = {International Conference on Artificial Intelligence and Statistics},
  year = {2020}
}

@inproceedings{cutkosky2019momentum,
	title={Momentum-based variance reduction in non-convex {SGD}},
	author={Cutkosky, Ashok and Orabona, Francesco},
	booktitle={Advances in Neural Information Processing Systems},
	year={2019}
}

@book{rockafellar2009variational,
  title = {Variational Analysis},
  author = {Rockafellar, Ralph Tyrrell and Wets, Roger J-B.},
  volume = {317},
  year = {2009},
  publisher = {Springer},
  address   = {Berlin, Heidelberg}
}

@inproceedings{macdonald2007overview,
  title={Overview of the {TREC} 2007 Blog Track.},
  author={Macdonald, Craig and Ounis, Iadh and Soboroff, Ian},
  booktitle={Text REtrieval Conference},
  year={2007}
}

@book{beck2017first,
  title={First-order methods in optimization},
  author={Beck, Amir},
  year={2017},
  publisher={SIAM},
  address   = {Philadelphia, PA}
}

@inproceedings{androutsopoulos2000evaluation,
  title={An Evaluation of Naive {B}ayesian Anti-Spam Filtering},
  author={Androutsopoulos, Ion and Koutsias, John and Chandrinos, Konstantinos V and Paliouras, George and Spyropoulos, Constantine D},
  booktitle={Workshop on Machine Learning in the New Information Age},
  year={2000}
}

@inproceedings{metsis2006spam,
  title={Spam filtering with naive {B}ayes-which naive {B}ayes?},
  author={Metsis, Vangelis and Androutsopoulos, Ion and Paliouras, Georgios},
  booktitle={Conference on Email and Anti-Spam},
  volume={17},
  pages={28--69},
  year={2006},
  organization={Mountain View, CA}
}

@inproceedings{ounis2006overview,
  title={Overview of the {TREC} 2006 Blog Track.},
  author={Ounis, Iadh and Macdonald, Craig and Soboroff, Ian},
  booktitle={Text REtrieval Conference},
  year={2006}
}

@article{guo2024sensitivity,
  title={Sensitivity analysis of the maximal value function with applications in nonconvex minimax programs},
  author={Guo, Lei and Ye, Jane J and Zhang, Jin},
  journal={Mathematics of Operations Research},
  volume={49},
  number={1},
  pages={536--556},
  year={2024},
  publisher={INFORMS}
}

@article{liu2018pessimistic,
  title = {Pessimistic Bilevel Optimization: A Survey},
  author = {Liu, June and Fan, Yuxin and Chen, Zhong and Zheng, Yue},
  journal = {International Journal of Computational Intelligence Systems},
  volume = {11},
  number = {1},
  pages = {725--736},
  year = {2018},
  publisher = {Springer}
}

@inproceedings{loridan1988approximate,
  title = {Approximate Solutions for Two-Level Optimization Problems},
  author = {Loridan, Pierre and Morgan, Jacqueline},
  booktitle = {French-German Conference on Optimization},
  year = {1988},
  publisher = {Springer}
}

@article{aboussoror2001existence,
  title = {Existence of Solutions to Two-Level Optimization Problems with Nonunique Lower-Level Solutions},
  author = {Aboussoror, Abdelmalek and Loridan, Pierre},
  journal = {Journal of Mathematical Analysis and Applications},
  volume = {254},
  number = {2},
  pages = {348--357},
  year = {2001},
  publisher = {Elsevier}
}

@inproceedings{lu2023slm,
  title={{S}lm: A smoothed first-order lagrangian method for structured constrained nonconvex optimization},
  author={Lu, Songtao},
  booktitle={Advances in Neural Information Processing Systems},
  year={2023}
}

@article{lu2024first,
  title = {First-Order Penalty Methods for Bilevel Optimization},
  author = {Lu, Zhaosong and Mei, Sanyou},
  journal = {SIAM Journal on Optimization},
  volume = {34},
  number = {2},
  pages = {1937--1969},
  year = {2024},
  publisher = {SIAM}
}

@inproceedings{kwon2023fully,
  title = {A Fully First-Order Method for Stochastic Bilevel Optimization},
  author = {Kwon, Jeongyeol and Kwon, Dohyun and Wright, Stephen J. and Nowak, Robert D.},
  booktitle = {International Conference on Machine Learning},
  year = {2023}
}

@article{pedregosa2011scikit,
  title={Scikit-learn: Machine learning in Python},
  author={Pedregosa, Fabian and Varoquaux, Ga{\"e}l and Gramfort, Alexandre and Michel, Vincent and Thirion, Bertrand and Grisel, Olivier and Blondel, Mathieu and Prettenhofer, Peter and Weiss, Ron and Dubourg, Vincent},
  journal={Journal of Machine Learning Research},
  volume={12},
  pages={2825--2830},
  year={2011}
}

@inproceedings{guanadaprox,
  title={{A}da{P}rox: A Novel Method for Bilevel Optimization under Pessimistic Framework},
  author={Guan, Ziwei and Sow, Daouda and Lin, Sen and Liang, Yingbin},
  booktitle={Conference on Parsimony and Learning},
  year={2025}
}

@inproceedings{shen2023penalty,
  title={On penalty-based bilevel gradient descent method},
  author={Shen, Han and Chen, Tianyi},
  booktitle={International Conference on Machine Learning},
  year={2023}
}

@article{benchouk2025scholtes,
  title={Scholtes relaxation method for pessimistic bilevel optimization},
  author={Benchouk, Imane and Jolaoso, Lateef O. and Nachi, Khadra and Zemkoho, Alain B},
  journal={Set-Valued and Variational Analysis},
  volume={33},
  number={2},
  pages={10},
  year={2025},
  doi={10.1007/s11228-025-00747-5},
  publisher={Springer}
}

@inproceedings{
kwon2023penalty,
title={On Penalty Methods for Nonconvex Bilevel Optimization and First-Order Stochastic Approximation},
author={Jeongyeol Kwon and Dohyun Kwon and Wright, Stephen J.  and Robert D. Nowak},
booktitle={International Conference on Learning Representations},
year={2024}
}

@article{dempe2019two,
  title={Two-level value function approach to non-smooth optimistic and pessimistic bilevel programs},
  author={Dempe, Stephan and Mordukhovich, Boris S and Zemkoho, Alain B},
  journal={Optimization},
  volume={68},
  number={2-3},
  pages={433--455},
  year={2019}
}

@inproceedings{liu2021towards,
  title={Towards gradient-based bilevel optimization with non-convex followers and beyond},
  author={Risheng Liu and Yaohua Liu and Shangzhi Zeng and Jin Zhang},
  booktitle={Advances in Neural Information Processing Systems},
  year={2021}
}

@article{allende2013solving,
  title={Solving bilevel programs with the {KKT}-approach},
  author={Gemayqzel Bouza Allende and Georg Still},
  journal={Mathematical Programming},
  volume={138},
  pages={309--332},
  year={2013},
  publisher={Springer}
}

@article{outrata1990numerical,
  title={On the numerical solution of a class of {S}tackelberg problems},
  author={Jiří V. Outrata},
  journal={Zeitschrift f{\"u}r Operations Research},
  volume={34},
  number={4},
  pages={255--277},
  year={1990}
}

@article{ye1995optimality,
  title={Optimality conditions for bilevel programming problems},
  author={Jane J. Ye and D. L Zhu},
  journal={Optimization},
  volume={33},
  number={1},
  pages={9--27},
  year={1995},
  publisher={Taylor and Francis}
}

@book{luo1996mathematical,
  title={Mathematical Programs with Equilibrium Constraints},
  author={Zhi-Quan Luo and Jong-Shi Pang and Daniel Ralph},
  year={1996},
  publisher={Cambridge University Press},
  address={Cambridge}
}

@article{dempe2013bilevel,
  title={The bilevel programming problem: reformulations, constraint qualifications and optimality conditions},
  author={Stephan Dempe and Alain B. Zemkoho},
  journal={Mathematical Programming},
  volume={138},
  pages={447--473},
  year={2013},
  publisher={Springer}
}

@article{colson2007overview,
  title={An overview of bilevel optimization},
  author={Benoît Colson and Patrice Marcotte and Gilles Savard},
  journal={Annals of Operations Research},
  volume={153},
  pages={235--256},
  year={2007},
  publisher={Springer}
}

@book{bonnans2013perturbation,
  title={Perturbation analysis of optimization problems},
  author = {Bonnans, J. Fr{\'e}d{\'e}ric and Shapiro, Alexander},
  year={2013},
  publisher={Springer},
  address = {New York}
}

@inproceedings{arbel2021amortized,
title={Amortized Implicit Differentiation for Stochastic Bilevel Optimization},
author={Michael Arbel and Julien Mairal},
booktitle={International Conference on Learning Representations},
year={2022}
}

@article{hong2023two,
  title={A two-timescale stochastic algorithm framework for bilevel optimization: Complexity analysis and application to actor-critic},
  author={Mingyi Hong and Hoi-To Wai and Zhaoran Wang and Zhuoran Yang},
  journal={SIAM Journal on Optimization},
  volume={33},
  number={1},
  pages={147--180},
  year={2023},
  publisher={SIAM}
}

@inproceedings{sow2022convergence,
  title={On the convergence theory for hessian-free bilevel algorithms},
  author={Daouda Sow and Kaiyi Ji and Yingbin Liang},
  booktitle={Advances in Neural Information Processing Systems},
  year={2022}
}

@inproceedings{franceschi2018bilevel,
  title={Bilevel programming for hyperparameter optimization and meta-learning},
  author={Luca Franceschi and Paolo Frasconi and Saverio Salzo and Riccardo Grazzi and Massimiliano Pontil},
  booktitle={International Conference on Machine Learning},
  year={2018}
}

@book{vonStackelbergHeinrich1953TTot,
  title     = {The Theory of the Market Economy},
  author    = {von Stackelberg, Heinrich},
  publisher = {Oxford University Press},
  address   = {London},
  year      = {1952}
}

@article{ustun2024hyperparameter,
  title={Hyperparameter Tuning Through Pessimistic Bilevel Optimization},
  author={Ustun, Meltem Apaydin and Xu, Liang and Zeng, Bo and Qian, Xiaoning},
  journal={arXiv preprint},
  note={arXiv:2412.03666},
  year={2024}
}

@inproceedings{liu2022bome,
  title={{B}ome! bilevel optimization made easy: A simple first-order approach},
  author={Bo Liu and Mao Ye and Stephen J. Wright and Peter Stone and Qiang Liu},
  booktitle={Advances in Neural Information Processing Systems},
  year={2022}
}

@inproceedings{ji2021bilevel,
  title={Bilevel optimization: Convergence analysis and enhanced design},
  author={Kaiyi Ji and Junjie Yang and Yingbin Liang},
  booktitle={International Conference on Machine Learning},
  year={2021}
}

@inproceedings{liu2020generic,
  title={A generic first-order algorithmic framework for bi-level programming beyond lower-level singleton},
  author={Risheng Liu and Pan Mu and Xiaoming Yuan and Shangzhi Zeng and Jin Zhang},
  booktitle={International Conference on Machine Learning},
  year={2020}
}

@inproceedings{pmlr-v70-franceschi17a,
  title = 	 {Forward and Reverse Gradient-Based Hyperparameter Optimization},
  author =       {Luca Franceschi and Michele Donini and Paolo Frasconi and Massimiliano Pontil},
  booktitle = 	 {International Conference on Machine Learning},
  year = 	 {2017},
}

@inproceedings{maclaurin2015gradient,
  title={Gradient-based hyperparameter optimization through reversible learning},
  author={Maclaurin, Dougal and Duvenaud, David and Adams, Ryan P},
  booktitle={International Conference on Machine Learning},
  year={2015},
}

@article{benchouk2026relaxation,
  title={Relaxation methods for pessimistic bilevel optimization},
  author={Benchouk, Imane and Jolaoso, Lateef O. and Nachi, Khadra and Zemkoho, Alain B},
  journal={Set-Valued and Variational Analysis},
  volume={34},
  number={1},
  pages={1},
  year={2026},
  publisher={Springer}
}

@article{zeng2020practical,
  title={A practical scheme to compute the pessimistic bilevel optimization problem},
  author={Bo Zeng},
  journal={INFORMS Journal on Computing},
  volume={32},
  number={4},
  pages={1128--1142},
  year={2020},
  publisher={INFORMS}
}

@article{aboussoror2005weak,
  title={Weak linear bilevel programming problems: existence of solutions via a penalty method},
  author={Abdelmalek Aboussoror and Abdelatif Mansouri},
  journal={Journal of Mathematical Analysis and Applications},
  volume={304},
  number={1},
  pages={399--408},
  year={2005},
  publisher={Elsevier}
}

@inproceedings{pedregosa2016hyperparameter,
  title={Hyperparameter optimization with approximate gradient},
  author={Pedregosa, Fabian},
  booktitle={International Conference on Machine Learning},
  year={2016}
}

@article{zheng2013exact,
  title={An exact penalty method for weak linear bilevel programming problem},
  author={Yue Zheng and Zhongping Wan and Kangtai Sun and Tao Zhang},
  journal={Journal of Applied Mathematics and Computing},
  volume={42},
  number={1},
  pages={41--49},
  year={2013},
  publisher={Springer}
}

@article{aussel2019pessimistic,
  title={Is pessimistic bilevel programming a special case of a mathematical program with complementarity constraints?},
  author={Didier Aussel and Anton Svensson},
  journal={Journal of Optimization Theory and Applications},
  volume={181},
  pages={504--520},
  year={2019},
  publisher={Springer}
}

@article{lampariello2019standard,
  title={The standard pessimistic bilevel problem},
  author={Lorenzo Lampariello and Simone Sagratella and Oliver Stein},
  journal={SIAM Journal on Optimization},
  volume={29},
  number={2},
  pages={1634--1656},
  year={2019},
  publisher={SIAM}
}

@article{dempe2014necessary,
  title={Necessary optimality conditions in pessimistic bilevel programming},
  author={Stephan Dempe and Boris S. Mordukhovich and Alain B. Zemkoho},
  journal={Optimization},
  volume={63},
  number={4},
  pages={505--533},
  year={2014},
  doi={10.1080/02331934.2012.696641},
  publisher={Taylor and Francis}
}

@article{wiesemann2013pessimistic,
  title={Pessimistic bilevel optimization},
  author={Wolfram Wiesemann and Angelos Tsoukalas and Polyxeni-Margarita Kleniati and Berç Rustem},
  journal={SIAM Journal on Optimization},
  volume={23},
  number={1},
  pages={353--380},
  year={2013},
  publisher={SIAM}
}

@article{benfield2024classification,
  title={Classification under strategic adversary manipulation using pessimistic bilevel optimisation},
  author={Benfield, David and Coniglio, Stefano and Kunc, Martin and Vuong, Phan Tu and Zemkoho, Alain B},
  journal={arXiv preprint},
  note={arXiv:2410.20284},
  year={2024}
}

@incollection{dempe2020bilevel,
  title={Bilevel optimization},
  author={Stephan Dempe and Alain B. Zemkoho},
  booktitle={Springer Optimization and its Applications},
  volume={161},
  year={2020},
  publisher={Springer},
  address={Cham}
}

@inproceedings{bruckner2011stackelberg,
  title={Stackelberg games for adversarial prediction problems},
  author={Michael Brückner and Tobias Scheffer},
  booktitle={International Conference on Knowledge Discovery and Data Mining},
  year={2011}
}

@incollection{ban2009risk,
	author    = {Ban, Xuegang and Lu, Shu and Ferris, Michael C. and Liu, Henry X.},
	title     = {Risk Averse Second Best Toll Pricing},
	booktitle = {Transportation and Traffic Theory 2009: Golden Jubilee},
	pages     = {197--218},
	publisher = {Springer},
	address   = {Boston, MA},
	year      = {2009},
}

@article{zheng2016pessimistic,
  title={Pessimistic bilevel optimization model for risk-averse production-distribution planning},
  author={Zheng, Yue and Zhang, Guangquan and Han, Jialin and Lu, Jie},
  journal={Information Sciences},
  volume={372},
  pages={677--689},
  year={2016},
  publisher={Elsevier}
}

@article{kis2021optimistic,
  title={On optimistic and pessimistic bilevel optimization models for demand response management},
  author={Tamás Kis and András Kovács and Csaba Mészáros},
  journal={Energies},
  volume={14},
  number={8},
  pages={2095},
  year={2021}
}

@article{calvete2024novel,
  title={A novel approach to pessimistic bilevel problems. {A}n application to the rank pricing problem with ties},
  author={Calvete, Herminia I and Gal{\'e}, Carmen and Hern{\'a}ndez, Aitor and Iranzo, Jos{\'e} A},
  journal={Optimization},
  pages={1--34},
  year={2024},
  publisher={Taylor \& Francis}
}

@inproceedings{qichao2025single,
  title={A Single-Loop Gradient Algorithm for Pessimistic Bilevel Optimization via Smooth Approximation},
  author={Cao, Qichao and Zeng, Shangzhi and Zhang, Jin},
  booktitle={Advances in Neural Information Processing Systems},
  year={2025}
}

@article{jimenez2025pessimistic,
  title={Pessimistic bilevel optimization approach for decision-focused learning},
  author={Jim{\'e}nez, Diego and Pagnoncelli, Bernardo K and Yaman, Hande},
  journal={arXiv preprint},
  note={arXiv:2501.16826},
  year={2025}
}

@inproceedings{zharmagambetov2023landscape,
  title={Landscape surrogate: Learning decision losses for mathematical optimization under partial information},
  author={Zharmagambetov, Arman and Amos, Brandon and Ferber, Aaron and Huang, Taoan and Dilkina, Bistra and Tian, Yuandong},
  booktitle={Advances in Neural Information Processing Systems},
  year={2023}
}

@article{bucareydecision,
	title={Decision-focused predictions via pessimistic bilevel optimization: complexity and algorithms},
	author={Bucarey L{\'o}pez, V{\'\i}ctor and Calder{\'o}n, Sophia and Mu{\~n}oz, Gonzalo and Semet, Fr{\'e}d{\'e}ric},
	journal={Computational Optimization and Applications},
	pages={1--31},
	year={2026},
	publisher={Springer}
}

@article{elmachtoub2022smart,
  title={Smart “predict, then optimize”},
  author={Elmachtoub, Adam N and Grigas, Paul},
  journal={Management Science},
  volume={68},
  number={1},
  pages={9--26},
  year={2022},
  publisher={INFORMS}
}

@article{stewart1977perturbation,
	title={On the perturbation of pseudo-inverses, projections and linear least squares problems},
	author={Stewart, Gilbert W},
	journal={SIAM Review},
	volume={19},
	number={4},
	pages={634--662},
	year={1977},
	publisher={SIAM}
}

@article{chen2024learning,
  title={Learning Decisions Offline from Censored Observations with $\epsilon$-insensitive Operational Costs},
  author={Chen, Minxia and Fu, Ke and Huang, Teng and Bai, Miao},
  journal={arXiv preprint},
  note={arXiv:2408.07305},
  year={2024}
}

@inproceedings{ferber2023surco,
  title={{S}urco: Learning linear surrogates for combinatorial nonlinear optimization problems},
  author={Ferber, Aaron M and Huang, Taoan and Zha, Daochen and Schubert, Martin and Steiner, Benoit and Dilkina, Bistra and Tian, Yuandong},
  booktitle={International Conference on Machine Learning},
  year={2023}
}

@article{steintrue,
	title={A true single--level reformulation for pessimistic bilevel optimization},
	author={Stein, Oliver and Zemkoho, Alain B},
	year={2026}
}

@article{fanaee2014event,
  title={Event labeling combining ensemble detectors and background knowledge},
  author={Fanaee-T, Hadi and Gama, Joao},
  journal={Progress in Artificial Intelligence},
  volume={2},
  number={2},
  pages={113--127},
  year={2014},
  publisher={Springer}
}

@article{ye1997exact,
  author = {Ye, J. J. and Zhu, D. L. and Zhu, Q. J.},
  title = {Exact Penalization and Necessary Optimality Conditions for Generalized Bilevel Programming Problems},
  journal = {SIAM Journal on Optimization},
  volume = {7},
  number = {2},
  pages = {481--507},
  year = {1997},
  doi = {10.1137/S1052623493257344}
}

\end{document}